\numberwithin{equation}{section}
\theoremstyle{plain}
\newtheorem{thm}{Theorem}[section]
\newtheorem{definition}[thm]{Definition}
\newtheorem{prop}[thm]{Proposition}
\newtheorem{cor}[thm]{Corollary}
\newtheorem{lemma}[thm]{Lemma}
\newtheorem{remark}[thm]{Remark}
\DeclareMathOperator{\rank}{rank}
\newcommand{\Rmnum}[1]{\expandafter\@slowromancap\romannumeral #1@}
\begin{document}
\title[Degenerate parabolic equation]
{Analytic smoothing effect of the Cauchy problem for a class of ultra-parabolic equations}

\author[X.-D.Cao \& C.-J. Xu]
{Xiao-Dong Cao and Chao-Jiang Xu}

\address{Xiao-Dong Cao and Chao-Jiang Xu
\newline\indent
School of Mathematics and Key Laboratory of Mathematical MIIT,
\newline\indent
Nanjing University of Aeronautics and Astronautics, Nanjing 210016, China
}
\email{caoxiaodong@nuaa.edu.cn; xuchaojiang@nuaa.edu.cn}

\date{\today}

\subjclass[]{}

\keywords{Degenerate parabolic equation, H\"ormander's condition, analytic smoothing effect}

\begin{abstract}
In this paper, we study a class of strongly degenerate ultra-parabolic equations with analytic coefficients. We demonstrate that the Cauchy problem exhibits an analytic smoothing effect. This means that, with an initial datum belonging to the Sobolev space $H^s$  (of  real index $s$), the associated Cauchy problem admits a unique solution that is analytic in all spatial variables for any strictly positive time. This smoothing effect property is similar to that of the Cauchy problem for uniformly parabolic equations with analytic coefficients.
\end{abstract}

\maketitle

\section{Introduction}
In this paper, we study the following Cauchy problem
\begin{equation}\label{1-1}
\begin{cases}
\partial_t u + X u + Y u- \displaystyle \sum_{k, j = 1}^{m_0}  a_{kj}(t, x) \partial_{x_k} \partial_{x_j} u = g(t, x),\quad t>0,\ \ x\in \mathbb{R}^{n},  \\
u\big|_{t = 0} = u_0\in H^s(\mathbb{R}^{n}),
\end{cases}
\end{equation}
where $ 1 \le m_0 < n, s\in\mathbb{R}$, $X =\displaystyle \sum_{k, j = 1}^n b_{kj} x_k \partial_{x_j} $ is a vector fields with real constant coefficients $(b_{k j})$, $Y =\displaystyle \sum^{m_0}_{\ell=1}b_\ell(t, x)\partial_{x_\ell}+b_0(t, x)$ is a first-order partial differential operator with real smooth coefficients and $(a_{kj}(t, x))$ is a symmetric $m_0\times m_0$ matrix  with real smooth coefficients.

The interest of studying this type of strongly degenerate ulta-parabolic equation arises from many applications, both in physics \cite{ILL-2}\cite{ILL-1}\cite{Kol-1}\cite{Villani-1}; finance \cite{Bar-1}\cite{BS}\cite{fi-1} and the mathematical contexts \cite{Bramanti}\cite{ref18}\cite{w-1}. For detailed motivations, we refer to Section \ref{S-7} of this paper. 

The purpose of this work is to investigate the analytic smoothing effect of the Cauchy problem \eqref{1-1}, where the coefficients are analytic. Specifically, for an initial datum $u_0\in H^s(\mathbb{R}^{n})$ with $s \in \mathbb{R}$, particularly for $s\le 0$, the solution of the Cauchy problem \eqref{1-1} is analytic for all spatial variables, i. e. $u(t)\in\mathcal{A}(\mathbb{R}^{n})$ for any $t>0$. It is well known that the main challenge in studying the regularity of this type of equation lies in overcoming its high degeneracy. To address this problem, a key aspect of our work is the construction of a family of well-chosen, time-dependent auxiliary vector fields. For the specific form of these vector fields, we refer to Section \ref{sectionsec}.

Now we provide the definition of the function space. Let $\Omega\subset\mathbb R^{n}$ be an open domain, the analytic function space $\mathcal A(\Omega)$ consists of $C^{\infty}$ functions $f$ that satisfy the following condition: there exists a constant $C>0$ such that,  for any $\alpha\in\mathbb N^{n}$,
$$
\left\| \partial^\alpha f \right\|_{L^\infty(\Omega)} \leq C^{\left| \alpha \right| + 1} \alpha !.
$$
By using the Sobolev embedding, we can replace the $L^{\infty}$ norm by the $L^{2}$ norm, or the norm in any Sobolev space in the above definition.

To state the main result, we first introduce the following notations. For any integer $q \ge 1$, define
\begin{equation}\label{initial vector fields}
\begin{aligned}
&{\bf X}_{0} =(\partial_{x_1}, \cdots, \partial_{x_{m_0}})=(X_{1, 0}, \cdots, X_{m_0, 0}),\\
&{\bf X}_{1} = \left[ {\bf X}_0, X \right] = ( \left[ \partial_{x_1}, X \right],\cdots,  \left[ \partial_{x_{m_0}}, X \right])=(X_{1, 1}, \cdots, X_{m_0, 1}),\\
&{\bf X}_{q} = \left[ {\bf X}_{q - 1}, X \right] =( \left[ X_{1, q-1}, X \right],\cdots,  \left[ X_{m_0, q-1}, X \right])=(X_{1, q}, \cdots, X_{m_0, q}),
\end{aligned}
\end{equation}
here $\left[ \cdot, \ \cdot \right]$ denotes the commutator. Then $\left\{ X_{p, q}; \ p = 1, \dots, m_0, \ q \in \mathbb{N} \right\}$ are vector fields with real constant coefficients.

Meanwhile, we give the following assumptions:
\begin{itemize}
  \item [{$\bf (H_1)$}] H\"ormander's condition:
$$
\begin{cases}
\mbox{There exists $r \ge 1$ such that ${\bf X}_{q} = 0$ if $q > r$,  ${\bf X}_{r} \not= 0$ and $\left\{ {\bf X}_0, {\bf X}_1, \dots, {\bf X}_r \right\}$}\\
\ \mbox{span the tangent space $T\mathbb{R}^n$.}
\end{cases}
$$
\item [{$\bf (H_2)$}] Partial diffusion:
Let  $T>0$,  there exists a constant $\Lambda > 0$ such that, for all $(t, x)
\in [0, T]\times\mathbb{R}^{n},\ \xi\in\mathbb{R}^{m_0}$,
\begin{equation*}
 \Lambda^{-1} \sum_{k = 1}^{m_0} \xi_k^2 \leq \sum_{k, j = 1}^{m_0} a_{kj}(t, x) \xi_k \xi_j
 \leq \Lambda \sum_{k = 1}^{m_0} \xi_k^2\  .
\end{equation*}
\item [{$\bf (H_3)$}] Analyticity of coefficients: Let $T>0$, there exists constants $B, \ C>0$ such that for any $\alpha \in\mathbb N^n$, $k, j \in\{1, \cdots, m_0\}$,
    $\ell \in \{ 0, \dots, m_0 \}$ and $ 0 \le t \leq T$, we have $a_{k j}, b_\ell, g\in C^{\infty}([0, T]\times\mathbb{R}^n )$ and  
\begin{equation}\label{1-2}
\begin{aligned}
&\left\| \partial^{\alpha} a_{kj}(t) \right\|_{L^2 \left( \mathbb{R}^n \right)} \leq B^{|\alpha| + 1} \alpha!, \quad
\left\| \partial^{\alpha} b_{\ell}(t) \right\|_{L^2 \left( \mathbb{R}^n \right)}\leq B^{|\alpha| + 1} \alpha!,\\
&\left\| \partial^{\alpha} g(t) \right\|_{L^2 \left( \mathbb{R}^n \right)} \leq C^{|\alpha| + 1} \alpha!. 
\end{aligned}
\end{equation}
\end{itemize}
The main result of this paper can be stated as follows.

\begin{thm}\label{thm1}
Let $T > 0, s\in\mathbb{R}$ and $u_0 \in H^s(\mathbb{R}^n)$. Assume that the equation in \eqref{1-1} satisfies the assumptions {$\bf (H_1), (H_2), (H_3)$}.
Then the Cauchy problem \eqref{1-1} admits a unique solution $u \in C^\infty ( ]0, T], \mathcal{A}(\mathbb{R}^n))$. Moreover, for $\delta > 1$, there exists a constant $L > 0$, such that for any $\alpha \in \mathbb{N}^n$,
\begin{equation}\label{analy-11}
\sup_{0 < t \leq T} t^{(\delta + 2r)|\alpha|} \| \partial^\alpha u(t) \|_{H^s(\mathbb{R}^n)} \leq L^{\left| \alpha \right| + 1} \alpha!.
\end{equation}
\end{thm}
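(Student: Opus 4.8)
The plan is to first establish well-posedness together with $C^\infty$-smoothing for $t>0$, and then to upgrade smoothness to analyticity through a quantitative induction on the order of differentiation, in which time-weighted versions of the Hörmander vector fields from \eqref{initial vector fields} are used to absorb the subelliptic loss caused by the degeneracy.

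For existence, uniqueness and interior regularity I would use the classical energy method. Testing the equation against $\langle D_x\rangle^{2s}u$ and using the partial ellipticity $\mathbf{(H_2)}$, the diffusion term controls $\sum_{p=1}^{m_0}\|X_{p,0}u\|_{H^s}^2=\sum_{p=1}^{m_0}\|\partial_{x_p}u\|_{H^s}^2$, while the transport term $Xu$ contributes, after integration by parts, only a bounded multiple of $\|u\|_{H^s}^2$ (for $s=0$ exactly $-\tfrac12\big(\sum_j b_{jj}\big)\|u\|_{L^2}^2$, the linear coefficients integrating by parts cleanly), and the first-order operator $Yu$ is likewise lower order. This yields the basic inequality
\[
\frac{d}{dt}\|u(t)\|_{H^s}^2+c_0\sum_{p=1}^{m_0}\|\partial_{x_p}u(t)\|_{H^s}^2\le C\big(\|u(t)\|_{H^s}^2+\|g(t)\|_{H^s}^2\big),
\]
from which existence and uniqueness in $C([0,T];H^s)$ follow by a Galerkin argument, and the hypoellipticity guaranteed by $\mathbf{(H_1)}$ gives $u\in C^\infty(\,]0,T]\times\mathbb{R}^n)$.

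The heart of the proof is to propagate the $m_0$ diffusion directions to all of $T\mathbb{R}^n$ via the commutator chain ${\bf X}_q=[{\bf X}_{q-1},X]$. Since $X_{p,q}u=X_{p,q-1}(Xu)-X(X_{p,q-1}u)$, substituting $Xu=g-\partial_t u-Yu+\sum_{k,j}a_{kj}\partial_{x_k}\partial_{x_j}u$ from the equation expresses each newly reached direction in terms of one time derivative plus one application of the diffusion to a lower-depth field. Iterating down the chain to depth $r$ forces anisotropic time weights: a direction reached after $q$ commutators can only be controlled with a positive power of $t$ growing with $q$. To keep these weights uniform I would work with the time-dependent vector fields of Section \ref{sectionsec}, schematically $D_{p,q}(t)=t^{a_q}X_{p,q}$ with $a_q$ increasing in $q$, and prove one subelliptic inequality for the whole family $\{D_{p,q}\}$. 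Because the worst direction needs $q=r$ commutators and each step of the recursion costs up to two powers of $t$ in the $L^2$ estimate, the cumulative weight per spatial derivative is controlled by $t^{\delta+2r}$, which is exactly the exponent in \eqref{analy-11}; the extra $\delta>1$ provides the margin needed to close the estimate.

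With the weighted subelliptic estimate available, I would prove \eqref{analy-11} by induction on $N=|\alpha|$. Applying $\partial^\alpha$ to the equation produces, besides $\partial^\alpha g$, the commutators $[\partial^\alpha,X]u$, $[\partial^\alpha,Y]u$ and $[\partial^\alpha,a_{kj}]\partial_{x_k}\partial_{x_j}u$. The commutator with $X=\sum b_{kj}x_k\partial_{x_j}$ is mild: as the $b_{kj}$ are constant, $[\partial^\alpha,X]=\sum_{k,j}b_{kj}\alpha_k\,\partial^{\alpha-e_k}\partial_{x_j}$ produces terms of the same total order $|\alpha|$ with a combinatorial factor of size $O(|\alpha|)$, which is absorbed by the time weight. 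The analyticity assumption $\mathbf{(H_3)}$ enters through the Leibniz expansions of the variable-coefficient commutators and of $\partial^\alpha g$: the bounds $\|\partial^\gamma a_{kj}\|_{L^2}\le B^{|\gamma|+1}\gamma!$ (and the analogous ones for $b_\ell$ and $g$) reduce the estimate of
\[
\sum_{0<\beta\le\alpha}\binom{\alpha}{\beta}\big\|\partial^{\alpha-\beta}a_{kj}\big\|\,\big\|\partial^{\beta}\partial_{x_k}\partial_{x_j}u\big\|
\]
to a convolution of factorial sequences, which is then summed against the induction hypothesis. Choosing $L$ large relative to $B,C,\Lambda,T,r$ and $\delta$ closes the induction and yields \eqref{analy-11}.

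The \emph{main obstacle} is the analytic bookkeeping in the last step: one must arrange the induction so that the constant grows only geometrically, like $L^{N}$, while simultaneously carrying the anisotropic weights $t^{a_q}$ of the different commutator depths $q\le r$. The delicate point is that the naive factorial convolution loses a factor $N$ at each step — giving a Gevrey bound $(\alpha!)^{\theta}$ with $\theta>1$ rather than genuine analyticity — so the Leibniz sums must be organized with summable weights (treating the endpoint terms $\beta=0,\alpha$ by the a priori estimate and recovering the missing fraction of a derivative from subellipticity on the interior terms $0<\beta<\alpha$) in order to preserve the exact factorial rate $\alpha!$.
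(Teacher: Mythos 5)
Your proposal correctly identifies the central difficulty --- that a naive induction loses a factor of $N$ per derivative and lands in a Gevrey class rather than in the analytic class --- but it does not supply the mechanism that actually resolves it, and the scheme as written would not close. Two points are decisive. First, the time-dependent fields you propose, schematically $D_{p,q}(t)=t^{a_q}X_{p,q}$ weighted separately for each depth $q$, are not what the paper uses and do not have the required commutation property: commuting $t^{a_q}X_{p,q}$ with $\partial_t+X$ produces both $a_q t^{a_q-1}X_{p,q}$ and $-t^{a_q}X_{p,q+1}$, i.e.\ a term in a \emph{more degenerate} direction at the same order, which cannot be absorbed. The paper's construction is the single field $H_{\delta,p}=\sum_{q=0}^{r}I^{q}(h_\delta)(t)\,X_{p,q}$ with antiderivative coefficients chosen precisely so that these contributions telescope, yielding the identity $[\partial_t+X,\,H_{\delta,p}^{d}]=d\,\delta\,t^{\delta-1}\partial_{x_p}H_{\delta,p}^{d-1}$ (Lemma \ref{lemma1}). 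The crucial feature is that the right-hand side lives in a \emph{diffusive} direction $\partial_{x_p}$, $p\le m_0$, carries only one factor of $d$, and its time integral is controlled by the dissipation term of the induction hypothesis at level $d-1$; this is exactly how the factorial rate $d!$ is preserved instead of degrading to $(d!)^{2}$.

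Second, your final induction on $|\alpha|$, applying $\partial^{\alpha}$ directly to the equation, founders on the term $[\partial^{\alpha},X]u=\sum_{k,j}b_{kj}\alpha_k\partial^{\alpha-e_k}\partial_{x_j}u$: for $j>m_0$ this is a derivative of full order $|\alpha|$ in a direction with no diffusion, multiplied by $|\alpha|$, and there is no ``missing fraction of a derivative from subellipticity'' available in the $H^s$ energy identity to pay for it --- the quadratic form only controls $\partial_{x_1},\dots,\partial_{x_{m_0}}$. The paper avoids this entirely by inducting on the power $d$ of the directional operators $H_{\delta,p}^{d}$ (Proposition \ref{prop4.1}), then recovering $t^{(\delta+r+\ell)d}\|X_{p,\ell}^{d}u\|_{H^s}$ by inverting the linear relation between the $H_{\delta,p}^{(k)}$ and the $X_{p,\ell}$ (Lemmas \ref{formHk} and \ref{Xpell-relationship}), and only at the very last step passing to general multi-indices via the crude bound $\|\partial^{\alpha}u\|_{H^s}\le\sum_j\|\partial_{x_j}^{|\alpha|}u\|_{H^s}$. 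Without the telescoping identity and the induction on directional powers, your argument would reproduce the known Gevrey regularity of Derridj--Zuily cited in Remark \ref{remark1}, not the analyticity claimed in the theorem.
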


\begin{remark}\label{remark1}
The H\"ormander's  hypoelliptic theorem of \cite{hormander1} implies that any weak solution of \eqref{1-1} is locally $C^\infty$ for all variables. Additionally, \cite{derridj-zuily} improves this by showing that the weak solution locally belongs to the Gevrey class of index greater than or equal to $2r+1$. The new observation of this work is that any weak solution of the Cauchy problem \eqref{1-1} is, in fact, analytic for all spatial variables.

It is well know that the uniformly parabolic equation exhibits the analytic smoothing effect property of Cauchy problem, so we extend this property to a very general and strongly degenerate parabolic equation.

A typical example of Cauchy problem for ultra-parabolic equation is stated as follows 
$$
\begin{cases}
\partial_t u + \displaystyle\sum^{n-1}_{j=1}x_j\partial_{x_{j+1}} u - a(t, x) \partial^2_{x_1} u = g,\quad t>0,\ \ x\in \mathbb{R}^{n},  \\
u\big|_{t = 0} = u_0\in H^s(\mathbb{R}^{n}),
\end{cases}
$$
where $n\ge 2$ and $s\in\mathbb{R}$, in particular, $s$ can be any negative index. Then the above Cauchy problem  admits  a  weak solution $u(t, x)$ which is analytic for $x\in\mathbb{R}^n$ when $t>0$, if $a$ and $g$ satisfy {$\bf (H_2)$} and {$\bf (H_3)$}.

We will give more examples with different motivations and backgrounds in Section \ref{S-7}.
\end{remark}

The rest of this paper is organized as follows. In Section \ref{section2}, we provide a proof of the existence theorem for the smoothing solution with smooth initial datum. In Section \ref{sectionsec}, we construct a system of time-dependent auxiliary vector fields, which is the core part of this paper, to help overcome the high degeneracy in the spatial variables. Additionally, several commutator estimates are provided. In Section \ref{section5}, we present \`{a} priori estimates for the auxiliary vector fields. In Section \ref{section6}, we prove the main result of Theorem \ref{thm1}. In Section \ref{S-6}, we offer supplementary proofs for technical lemmas that are directly utilized in Sections \ref{section2} and \ref{section6}. Finally, in Section \ref{S-7}, we present several specific examples for the target equation in the Cauchy problem \eqref{1-1}, which illustrate the motivations and background of our topic.

\section{Existence of the smooth solution}\label{section2}
In this section, we prove the existence of the smooth solution with smooth initial datum. Remark that \cite{Des-Vil}, 
$$
\mathscr{S}(\mathbb{R}^n) =\bigcap_{m, \mu \in \mathbb{N}} H^m_\mu (\mathbb{R}^n),
$$
and the norm in $H^m_\mu (\mathbb{R}^n)$ can be stated as follows \cite{Equal},    
$$
\|v\|^2_{H^m_\mu(\mathbb{R}^n)} = \left\| \left< x \right>^\mu \Lambda_x^m v \right\|_{L^2(\mathbb{R}^n)}^2 \approx \left\| \Lambda_x^m (\left< x \right>^\mu v) \right\|_{L^2(\mathbb{R}^n)}^2.
$$
where $\left< x \right>=(1+|x|^2)^{\frac 12}$ and $\Lambda_x = (1-\triangle_x)^{\frac 12}$, i. e. the Fourier multiplier of the symbol $\left< \xi \right>$ with respect to $x \in \mathbb{R}^n$.

First, we establish the following commutator estimate, which is used throughout the paper. However, to maintain the readability of the paper, the proof of this estimate is postponed to Section \ref{S-6}.
\begin{lemma}\label{lemma-0}
Let $s \in \mathbb{R}$, there exists a constant $C_1 > 0$ which is dependent on $s, n$, such that  for any suitable functions $h $ and $f $, we have that, 

(1) for the commutators,  
\begin{equation}\label{commu-1}
\left\| \left[ h, \Lambda_x^s \right] f \right\|_{L^2(\mathbb{R}^n)} \leq C_1 \left\| h \right\|_{H^{s_0}(\mathbb{R}^n)} \left\|  f \right\|_{H^{s - 1}(\mathbb{R}^n)}, 
\end{equation}
where $s_0 =  |s - 1|  + \frac{n}{2} + 2$ .

(2) for the multiplication, 
\begin{equation}\label{commu-2}
\left\| h f \right\|_{H^s(\mathbb{R}^n)} \leq C_1 \left\| h \right\|_{H^{s_1}(\mathbb{R}^n)} \left\| f \right\|_{H^s(\mathbb{R}^n)}, 
\end{equation}
where $s_1 = |s| + \frac{n}{2} + 1$ .
\end{lemma}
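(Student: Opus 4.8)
The plan is to prove both estimates on the Fourier side, reducing each to a convolution inequality that is closed by Young's inequality; the only genuinely quantitative input for the commutator (1) is a mean-value bound on the difference of symbols $\left<\xi\right>^s-\left<\eta\right>^s$, which is what produces the gain of one derivative. Throughout I would use Peetre's inequality
\begin{equation*}
\left< \xi \right>^s \leq 2^{|s|/2} \left< \xi - \eta \right>^{|s|} \left< \eta \right>^s, \qquad \xi, \eta \in \mathbb{R}^n,
\end{equation*}
together with the fact that $\left< \cdot \right>^{-\sigma} \in L^2(\mathbb{R}^n)$ whenever $\sigma > n/2$. Combining the latter with the Cauchy–Schwarz inequality gives, for any $\tau \in \mathbb{R}$, the device
\begin{equation*}
\left\| \left< \cdot \right>^{\tau} \hat{w} \right\|_{L^1(\mathbb{R}^n)} \leq \left\| \left< \cdot \right>^{-n/2-1} \right\|_{L^2} \left\| \left< \cdot \right>^{\tau + n/2 + 1} \hat{w} \right\|_{L^2} = C_n \left\| w \right\|_{H^{\tau + n/2 + 1}(\mathbb{R}^n)},
\end{equation*}
which is precisely the mechanism converting an $L^1$-type Fourier bound on $h$ into a Sobolev norm and fixing the index shifts $+\,n/2+1$ appearing in $s_0, s_1$.

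For the multiplication estimate (2), I would start from $\widehat{hf}(\xi) = c_n \int \hat{h}(\xi - \eta)\hat{f}(\eta)\, d\eta$, multiply by $\left<\xi\right>^s$, and split $\left<\xi\right>^s \le C\left<\xi-\eta\right>^{|s|}\left<\eta\right>^s$ by Peetre's inequality. This dominates $\left<\xi\right>^s|\widehat{hf}(\xi)|$ by the convolution of $\left<\cdot\right>^{|s|}|\hat{h}|$ with $\left<\cdot\right>^{s}|\hat{f}|$, so Young's inequality $\|g_1 * g_2\|_{L^2}\le \|g_1\|_{L^1}\|g_2\|_{L^2}$ yields
\begin{equation*}
\| hf \|_{H^s(\mathbb{R}^n)} \leq C \left\| \left<\cdot\right>^{|s|}\hat{h} \right\|_{L^1}\left\| f \right\|_{H^s(\mathbb{R}^n)},
\end{equation*}
and the $L^1$-to-Sobolev device with $\tau = |s|$ lands exactly on $s_1 = |s| + n/2 + 1$.

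For the commutator estimate (1), the same representation gives $\widehat{[h,\Lambda_x^s]f}(\xi) = c_n\int \hat{h}(\xi-\eta)\bigl(\left<\eta\right>^s - \left<\xi\right>^s\bigr)\hat{f}(\eta)\,d\eta$, so everything rests on estimating the symbol difference. Here I would apply the mean value theorem, the bound $|\nabla \left<\zeta\right>^s| \leq |s|\left<\zeta\right>^{s-1}$, and Peetre's inequality at the intermediate point $\zeta$ on the segment joining $\eta$ to $\xi$, obtaining
\begin{equation*}
\bigl| \left<\xi\right>^s - \left<\eta\right>^s \bigr| \leq C\, |\xi - \eta|\, \left<\xi - \eta\right>^{|s-1|}\left<\eta\right>^{s-1} \leq C \left<\xi-\eta\right>^{|s-1|+1}\left<\eta\right>^{s-1}.
\end{equation*}
Substituting this, $|\widehat{[h,\Lambda_x^s]f}(\xi)|$ is dominated by the convolution of $\left<\cdot\right>^{|s-1|+1}|\hat{h}|$ with $\left<\cdot\right>^{s-1}|\hat{f}|$, and Young's inequality gives $\|[h,\Lambda_x^s]f\|_{L^2} \leq C \|\left<\cdot\right>^{|s-1|+1}\hat{h}\|_{L^1}\|f\|_{H^{s-1}}$; the $L^1$-to-Sobolev device with $\tau = |s-1|+1$ then produces exactly $s_0 = (|s-1|+1) + n/2 + 1 = |s-1| + n/2 + 2$.

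The main obstacle is the symbol-difference bound for (1): one must extract the single factor $|\xi-\eta|$ — the gained derivative, reflected in the appearance of $H^{s-1}$ rather than $H^s$ on the right-hand side — while keeping the $\eta$-dependence at the sharp order $\left<\eta\right>^{s-1}$, and it is the control of the intermediate point $\zeta$ through Peetre's inequality that makes the estimate uniform for all real $s$. Everything else, namely the reduction to convolutions, Young's inequality, and the Cauchy–Schwarz passage from an $L^1$ Fourier bound to a Sobolev norm, is routine; only the index bookkeeping needs to be tracked carefully to recover the stated $s_0$ and $s_1$.
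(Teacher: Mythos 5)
Your proposal is correct and follows essentially the same route as the paper: both estimates are proved on the Fourier side via Peetre's inequality, Young's convolution inequality, and the Cauchy--Schwarz passage from an $L^1$ Fourier bound to a Sobolev norm, with the commutator gain of one derivative extracted from the symbol difference $\left<\xi\right>^s-\left<\eta\right>^s$. The only (immaterial) difference is that you invoke the mean value theorem at an intermediate point $\zeta$ where the paper writes the first-order Taylor remainder in integral form before applying Peetre's inequality; the resulting convolution kernels and the indices $s_0$, $s_1$ are identical.
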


Next, we present some preliminary lemmas that are used in the proof of the main result of this section.
\begin{lemma}\label{prelemma-1}
For any $m, \mu \in \mathbb{N}$, there exists a constant $C_2 > 0$ such that for any suitable functions  $f$,
\begin{equation*}
\big| \left(Xf, f \right)_{H^m_\mu(\mathbb{R}^n)} \big|+\big| \left(Y f, f \right)_{H^m_\mu(\mathbb{R}^n)} \big|  \leq C_2 \left\| f \right\|_{H^m_\mu(\mathbb{R}^n)}^2.
\end{equation*}
\end{lemma}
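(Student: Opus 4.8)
The plan is to reduce the weighted-Sobolev pairing to an $L^2$ pairing after commuting the order operators through the first-order fields. Using the first of the two equivalent norms, I would set $P=\langle x\rangle^\mu\Lambda_x^m$ and $g=Pf$ (for suitable, i.e. Schwartz, $f$), so that $(Zf,f)_{H^m_\mu}=(PZf,Pf)_{L^2}$ for $Z\in\{X,Y\}$, and then split $PZf=ZPf+[P,Z]f=Zg+[P,Z]f$. The structural point to keep in mind is that $X$ lowers both the regularity and the weight by one (it maps $H^m_\mu$ into $H^{m-1}_{\mu-1}$), so a direct Cauchy--Schwarz estimate of $(Zf,f)_{H^m_\mu}$ is hopeless; the top-order term must be disposed of by the skew-symmetry of a real vector field, and only the commutator is estimated by its size.

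For the main term $(Zg,g)_{L^2}$ I would integrate by parts. Writing a real first-order field as $Z=\sum_j c_j\partial_{x_j}$ (the zeroth-order part $b_0$ of $Y$ is treated separately), one has $2\,\mathrm{Re}\,(Zg,g)_{L^2}=-\big((\operatorname{div}c)\,g,g\big)_{L^2}$. For $X$ the coefficient field is $c_j=\sum_k b_{kj}x_k$, whose divergence is the constant $\operatorname{tr}(b_{kj})$; for the diffusion part $Y_1=\sum_\ell b_\ell\partial_{x_\ell}$ of $Y$ the divergence $\sum_\ell\partial_{x_\ell}b_\ell$ is bounded by $\mathbf{(H_3)}$ (and Sobolev embedding). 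Hence this term is $\le C\|g\|_{L^2}^2=C\|f\|_{H^m_\mu}^2$. As is standard in the energy method one works with the real inner product (equivalently, with real-valued $f$); otherwise only $\mathrm{Re}\,(Zf,f)_{H^m_\mu}$ admits such a bound, the imaginary part being genuinely unbounded here. The zeroth-order term $(b_0 f,f)_{H^m_\mu}$ needs no cancellation: since $b_0$ and all its derivatives are bounded, multiplication by $b_0$ preserves $H^m_\mu$, giving $|(b_0f,f)_{H^m_\mu}|\le C\|f\|_{H^m_\mu}^2$ directly.

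For the commutator term I would use $[P,Z]=\langle x\rangle^\mu[\Lambda_x^m,Z]+[\langle x\rangle^\mu,Z]\Lambda_x^m$ and bound each piece in $L^2$ by $C\|f\|_{H^m_\mu}$, so that $|([P,Z]f,g)_{L^2}|\le\|[P,Z]f\|_{L^2}\|g\|_{L^2}\le C\|f\|_{H^m_\mu}^2$. The weight commutator is a multiplication operator, $[\langle x\rangle^\mu,Z]=-Z(\langle x\rangle^\mu)$, and since $X(\langle x\rangle^\mu)=\mu\langle x\rangle^{\mu-2}\sum_{k,j}b_{kj}x_kx_j=O(\langle x\rangle^\mu)$ (and likewise $Y_1(\langle x\rangle^\mu)=O(\langle x\rangle^\mu)$, with the extra decay of $b_\ell$), composing with $\Lambda_x^m$ yields a term bounded by $C\|\langle x\rangle^\mu\Lambda_x^m f\|_{L^2}=C\|f\|_{H^m_\mu}$. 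For the regularity commutator, $[\Lambda_x^m,X]=\sum_{k,j}b_{kj}[\Lambda_x^m,x_k]\partial_{x_j}$, where $[\Lambda_x^m,x_k]$ is a Fourier multiplier of order $m-1$, so $[\Lambda_x^m,X]$ is of order $m$ and maps $H^m$ to $L^2$; while $[\Lambda_x^m,Y_1]=\sum_\ell[\Lambda_x^m,b_\ell]\partial_{x_\ell}$ is controlled by the commutator estimate Lemma~\ref{lemma-0}(1) together with $\mathbf{(H_3)}$. In each case one still has to move the remaining weight $\langle x\rangle^\mu$ through these multipliers, which is a strictly lower-order operation and is again absorbed by Lemma~\ref{lemma-0}.

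I expect the genuine obstacle to be exactly this last bookkeeping for $X$: because the coefficient $x_k$ grows, one must verify that commuting the combined order operator $\langle x\rangle^\mu\Lambda_x^m$ past $x_k\partial_{x_j}$ produces no net loss of order or weight --- the single derivative gained in $[\Lambda_x^m,x_k]$ has to compensate exactly for the derivative spent by $\partial_{x_j}$, and the weight commutator must remain at order $\mu$ rather than $\mu+1$. Once this cancellation is made precise, the estimates for $X$ and for $Y$ proceed in parallel, and collecting the main and commutator contributions yields the stated bound, with $C_2$ depending on $m,\mu,n$, the constants in $\mathbf{(H_3)}$, and $\operatorname{tr}(b_{kj})$.
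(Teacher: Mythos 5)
Your proposal is correct and follows essentially the same route as the paper, which (in the remark after the lemma) reduces to integration by parts for the $m=\mu=0$ case and then estimates the commutators $[x_k,\left<x\right>^\mu\Lambda_x^m]$ directly and $[b_\ell,\left<x\right>^\mu\Lambda_x^m]$ via \eqref{commu-1}; your decomposition $PZf=ZPf+[P,Z]f$ with $P=\left<x\right>^\mu\Lambda_x^m$ is exactly that argument spelled out. Your observations about the necessity of real-valuedness (or taking real parts) and about the order/weight bookkeeping for the $x_k\partial_{x_j}$ terms are accurate refinements of what the paper leaves implicit.
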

Note the coefficients of vector fields $X = \displaystyle \sum_{k, j = 1}^{m_0} b_{kj} x_k \partial_{x_j}$ and $Y = \displaystyle \sum_{\ell = 1}^{m_0} b_\ell (t, x) \partial_{x_\ell} + b_0(t, x)$ satisfy the assumption ${\bf{(H_3)}}$. The proof of the above lemma involves using integration by parts for the case $m=\mu=0$. For the other cases, we need to estimate the commutators $[x_k, \left< x \right>^\mu \Lambda_x^m]$, which can be computed directly, and $[b_\ell, \left< x \right>^\mu\Lambda_x^m]$, for which we can use \eqref{commu-1}. 

A similar computation also gives the following lemma. 
\begin{lemma}\label{prelemma-2}
For any $m, \mu \in \mathbb{N}$,  there exists a constant $C_3 > 0$ such that for any suitable function $f $, 
\begin{equation*}
\big| \big( \sum_{j = 1}^{m_0} \big( \sum_{k= 1}^{m_0}\partial_{x_k} a_{k j} \big) \big( \partial_{x_j} f \Big), f \big)_{H^m_\mu(\mathbb{R}^n)} \big| \leq C_3 \left\| f \right\|_{H^m_\mu(\mathbb{R}^n)}^2.
\end{equation*}
\end{lemma}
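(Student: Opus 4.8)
The plan is to treat
\[
Pf:=\sum_{j=1}^{m_0}\Big(\sum_{k=1}^{m_0}\partial_{x_k}a_{kj}\Big)\partial_{x_j}f
\]
as a first-order operator with smooth coefficients and to run the same energy/commutator argument that controls the term $(Yf,f)_{H^m_\mu}$ in Lemma \ref{prelemma-1}. Set $c_j=\sum_{k=1}^{m_0}\partial_{x_k}a_{kj}$, so that $Pf=\sum_{j=1}^{m_0}c_j\,\partial_{x_j}f$ has exactly the structure of the principal part of $Y=\sum_\ell b_\ell\partial_{x_\ell}+b_0$, with $c_j$ playing the role of $b_\ell$ and $b_0\equiv 0$. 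First I would note that $\bf (H_3)$ and the Leibniz rule force each $c_j$ to be smooth with every derivative bounded in the Sobolev norms appearing in Lemma \ref{lemma-0}; indeed differentiating the bound for $a_{kj}$ once and absorbing the factor $(\alpha_k+1)$ into a larger constant gives $\|\partial^\alpha c_j\|_{L^2(\mathbb{R}^n)}\le (B')^{|\alpha|+1}\alpha!$. This reduces the claim to the $Y$-case already treated.

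Next I would expand the inner product in the concrete form of the norm recalled above. Writing $w=\langle x\rangle^\mu\Lambda_x^m f$, one has
\begin{equation*}
(Pf,f)_{H^m_\mu(\mathbb{R}^n)}=\big(\langle x\rangle^\mu\Lambda_x^m(Pf),\,w\big)_{L^2(\mathbb{R}^n)}=(Pw,w)_{L^2(\mathbb{R}^n)}+\big([\langle x\rangle^\mu\Lambda_x^m,\,P]f,\,w\big)_{L^2(\mathbb{R}^n)}.
\end{equation*}
For the diagonal term I would integrate by parts: since the $c_j$ are real and it suffices to treat real-valued $f$ (hence real $w$), one gets $(Pw,w)_{L^2}=\tfrac12\sum_j\int c_j\,\partial_{x_j}(w^2)=-\tfrac12\sum_j\int(\partial_{x_j}c_j)\,w^2$, whence $|(Pw,w)_{L^2}|\le \tfrac12\big(\sum_j\|\partial_{x_j}c_j\|_{L^\infty}\big)\|w\|_{L^2}^2\le C\|f\|_{H^m_\mu}^2$, the $L^\infty$ bounds coming from $\bf (H_3)$ and the Sobolev embedding.

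The heart of the matter is the commutator term, for which I would prove
\begin{equation*}
\big\|[\langle x\rangle^\mu\Lambda_x^m,\,P]f\big\|_{L^2(\mathbb{R}^n)}\le C\,\|f\|_{H^m_\mu(\mathbb{R}^n)},
\end{equation*}
so that Cauchy--Schwarz closes the estimate. Using that $\partial_{x_j}$ commutes with $\Lambda_x^m$, I would split $[\langle x\rangle^\mu\Lambda_x^m,c_j\partial_{x_j}]$ into a multiplier part $[\Lambda_x^m,c_j]\partial_{x_j}f$ and a weight part coming from $[\langle x\rangle^\mu,\partial_{x_j}]=-\mu x_j\langle x\rangle^{\mu-2}$. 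The multiplier part is estimated directly by \eqref{commu-1} with $s=m$ and $h=c_j$, giving $\|[\Lambda_x^m,c_j]\partial_{x_j}f\|_{L^2}\le C_1\|c_j\|_{H^{s_0}}\|\partial_{x_j}f\|_{H^{m-1}}\le C\|f\|_{H^m}$; the decisive point is that the commutator has order $m$, not $m+1$. The weight part gains one power of $\langle x\rangle$ and is thus absorbed into the $H^m_\mu$-norm, the interchange of $\langle x\rangle^\mu$ and $\Lambda_x^m$ being controlled by the equivalence $\|\langle x\rangle^\mu\Lambda_x^m v\|_{L^2}\approx\|\Lambda_x^m(\langle x\rangle^\mu v)\|_{L^2}$ recalled above.

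The main obstacle is precisely this order-counting. A naive bound on $\langle x\rangle^\mu\Lambda_x^m(Pf)$ would cost $\|f\|_{H^{m+1}_\mu}$, which is not available; the whole point is that pairing against $w$ lets the top-order derivative act antisymmetrically in the diagonal term, while the commutator genuinely drops one order. Keeping careful track of how the weight $\langle x\rangle^\mu$ and the Fourier multiplier $\Lambda_x^m$ interleave with the variable coefficients $c_j$, and invoking \eqref{commu-1} to absorb each coefficient in the right Sobolev norm, is the technical crux. Collecting the diagonal and commutator bounds then yields the claimed inequality with a constant $C_3$ depending only on $m,\mu,n$ and the coefficient bounds in $\bf (H_3)$.
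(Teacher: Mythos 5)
Your proposal follows exactly the route the paper intends: it treats $\sum_j(\sum_k\partial_{x_k}a_{kj})\partial_{x_j}$ as a first-order operator of the same type as $Y$, controls the diagonal term by integration by parts (using ${\bf (H_3)}$ and Sobolev embedding for $\|\partial_{x_j}c_j\|_{L^\infty}$), and bounds the commutator with $\left<x\right>^\mu\Lambda_x^m$ via \eqref{commu-1} together with the explicit weight commutator --- which is precisely the ``similar computation'' to Lemma \ref{prelemma-1} that the paper invokes. The argument is correct and essentially identical to the paper's (merely more detailed, since the paper only sketches it).
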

By using the lemma \ref{prelemma-2}, we also have the following result.
\begin{lemma}\label{prelemma-2+}
For any $m, \mu \in \mathbb{N}$, there exists a constant $C_4 > 0$ such that for any suitable function $f$,
\begin{equation*}
(2\Lambda)^{-1} \sum_{k = 1}^{m_0}\|\partial_{x_k} f\|^2_{H^m_\mu(\mathbb{R}^n)}\le
-\Big( \sum_{k, j = 1}^{m_0} a_{kj} \partial_{x_k}\partial_{x_j} f, f \Big)_{H^m_\mu(\mathbb{R}^n)} + C_4 \left\| f \right\|_{H^m_\mu(\mathbb{R}^n)}^2.
\end{equation*}
\end{lemma}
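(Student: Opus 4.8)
The plan is to reduce the general weighted estimate to the basic $L^2$ case ($m=\mu=0$) and then control the error produced by the weight operator $\left< x \right>^\mu\Lambda_x^m$ through commutators. For the base case I would integrate by parts in the $L^2$ inner product:
\begin{equation*}
-\Big(\sum_{k,j=1}^{m_0} a_{kj}\partial_{x_k}\partial_{x_j} f, f\Big)_{L^2} = \sum_{k,j=1}^{m_0}\big(a_{kj}\partial_{x_j} f, \partial_{x_k} f\big)_{L^2} + \sum_{j=1}^{m_0}\Big(\big(\sum_{k=1}^{m_0}\partial_{x_k} a_{kj}\big)\partial_{x_j} f, f\Big)_{L^2}.
\end{equation*}
The first sum is bounded below by $\Lambda^{-1}\sum_{k=1}^{m_0}\|\partial_{x_k} f\|_{L^2}^2$ thanks to the coercivity assumption ${\bf (H_2)}$, while the absolute value of the second sum is at most $C_3\|f\|_{L^2}^2$ by Lemma \ref{prelemma-2} (taken with $m=\mu=0$). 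This already yields the claim for $m=\mu=0$ with the full coefficient $\Lambda^{-1}$; the factor $(2\Lambda)^{-1}$ in the statement leaves a margin that I will spend on the commutator errors arising when $m,\mu>0$.

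For the general case I would set $W=\left< x \right>^\mu\Lambda_x^m$, so that $(u,v)_{H^m_\mu(\mathbb{R}^n)}=(Wu,Wv)_{L^2}$, and write
\begin{equation*}
-\Big(\sum_{k,j=1}^{m_0} a_{kj}\partial_{x_k}\partial_{x_j} f, f\Big)_{H^m_\mu} = -\Big(\sum_{k,j=1}^{m_0} a_{kj}\partial_{x_k}\partial_{x_j} (Wf), Wf\Big)_{L^2} - \Big(\big[W, \sum_{k,j=1}^{m_0} a_{kj}\partial_{x_k}\partial_{x_j}\big]f, Wf\Big)_{L^2}.
\end{equation*}
Applying the base case to $g=Wf$ bounds the first term from below by $\Lambda^{-1}\sum_{k=1}^{m_0}\|\partial_{x_k}(Wf)\|_{L^2}^2-C\|f\|_{H^m_\mu}^2$, using $\|Wf\|_{L^2}=\|f\|_{H^m_\mu}$. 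Since $[\partial_{x_k},W]=(\partial_{x_k}\left< x \right>^\mu)\Lambda_x^m$ carries weight $\mu-1$, it is harmless: $\|\partial_{x_k}(Wf)\|_{L^2}^2\ge(1-\epsilon)\|\partial_{x_k}f\|_{H^m_\mu}^2-C_\epsilon\|f\|_{H^m_\mu}^2$. Thus the plan reduces to showing the commutator term is absorbable, namely
\begin{equation*}
\Big|\Big(\big[W, \sum_{k,j=1}^{m_0} a_{kj}\partial_{x_k}\partial_{x_j}\big]f, Wf\Big)_{L^2}\Big|\le \epsilon'\sum_{k=1}^{m_0}\|\partial_{x_k}f\|_{H^m_\mu}^2 + C_{\epsilon'}\|f\|_{H^m_\mu}^2,
\end{equation*}
after which a choice of small $\epsilon,\epsilon'$ keeps the surviving gradient coefficient at least $(2\Lambda)^{-1}$ and pushes everything else into $C_4\|f\|_{H^m_\mu}^2$.

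The main obstacle is precisely this commutator estimate. Using the product rule $[W,P]=\left< x \right>^\mu[\Lambda_x^m,P]+[\left< x \right>^\mu,P]\Lambda_x^m$ with $P=\sum a_{kj}\partial_{x_k}\partial_{x_j}$, and the fact that $\Lambda_x^m$ commutes with $\partial_{x_k}$ so that $[\Lambda_x^m,P]=\sum[\Lambda_x^m,a_{kj}]\partial_{x_k}\partial_{x_j}$, I would invoke the gain-of-one-derivative bound \eqref{commu-1} of Lemma \ref{lemma-0} together with the analyticity bounds ${\bf (H_3)}$ (which supply the $H^{s_0}$-control of the coefficients) to estimate $\|[\Lambda_x^m,a_{kj}]\partial_{x_k}\partial_{x_j}f\|_{L^2}$ by $\|\partial_{x_j}f\|_{H^m}$-type quantities. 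The decisive structural point is that the diffusion acts only in the directions $k,j\le m_0$: every second-order term $\partial_{x_k}\partial_{x_j}$ has both indices in this range, so once the commutator lowers the order by one what remains is a first-order derivative $\partial_{x_j}f$ with $j\le m_0$, i.e. one of the very gradient directions appearing on the left-hand side, hence absorbable through Young's inequality with a small constant; the genuinely lower-order remainders (in particular the weighted pieces from $[\left< x \right>^\mu,P]$, which carry weight $\mu-1$ or $\mu-2$) are controlled by $\|f\|_{H^m_\mu}^2$. Carrying the weight $\left< x \right>^\mu$ through the fractional commutator $[\Lambda_x^m,a_{kj}]$ is the most delicate bookkeeping, and I expect to handle it either by a weighted version of \eqref{commu-1} or by first commuting $\left< x \right>^\mu$ inward.
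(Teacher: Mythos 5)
Your argument is correct and uses the same three ingredients as the paper's proof (integration by parts, the ellipticity assumption ${\bf (H_2)}$, and the commutator estimate \eqref{commu-1} combined with ${\bf (H_3)}$), but it organizes them along a genuinely different decomposition. The paper first splits $-\sum a_{kj}\partial_{x_k}\partial_{x_j}f=\sum(\partial_{x_k}a_{kj})(\partial_{x_j}f)-\sum\partial_{x_k}\left(a_{kj}\partial_{x_j}f\right)$ inside the weighted inner product, disposes of the first sum by Lemma \ref{prelemma-2}, and for the divergence-form part moves one $\partial_{x_k}$ across the inner product before commuting $a_{kj}$ with the weight $\left<x\right>^\mu\Lambda_x^m$; the commutator there only ever acts on the first-order quantity $\partial_{x_j}f$, so the error is directly of the form $\|f\|_{H^m_\mu}\sum_k\|\partial_{x_k}f\|_{H^m_\mu}$ and Young's inequality costs half the coercivity constant --- exactly the mechanism of Lemma \ref{lemma3} in the $H^s$ setting. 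You instead conjugate by $W=\left<x\right>^\mu\Lambda_x^m$ at the outset, reduce to a clean $L^2$ base case, and then must control the commutator of $W$ with the full non-divergence-form operator; this produces terms of net order $m+1$, but, as you correctly observe, the surviving derivative always lies in one of the diffusive directions $x_1,\dots,x_{m_0}$ and is therefore absorbable by the same Young argument, which is precisely why the stated constant is $(2\Lambda)^{-1}$ rather than $\Lambda^{-1}$. Your route buys a conceptually cleaner reduction to the unweighted case at the price of slightly heavier commutator bookkeeping. Both proofs leave the same technical point implicit --- a weighted variant of \eqref{commu-1} in which $\left<x\right>^\mu$ is carried through $[\Lambda_x^m,a_{kj}]$ --- which you flag explicitly and the paper compresses into the phrase ``the computation of the commutator $[a_{kj},\left<x\right>^\mu\Lambda_x^m]$''; no gap beyond what the paper itself tolerates.
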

Here we use
\begin{align*}
-&\Big( \sum_{k, j = 1}^{m_0} a_{kj} \partial_{x_k}\partial_{x_j} f, f \Big)_{H^m_\mu(\mathbb{R}^n)}\\
&=\Big( \sum_{k, j = 1}^{m_0} \left( \partial_{x_k} a_{kj} \right) \left( \partial_{x_j} f \right), f \Big)_{H^m_\mu(\mathbb{R}^n)} - \Big( \sum_{k, j = 1}^{m_0} \partial_{x_k} \left( a_{kj} \partial_{x_j} f \right), f \Big)_{H^m_\mu(\mathbb{R}^n)}
\end{align*}
and the computation of the commutator $[a_{kj}, \left< x \right>^\mu \Lambda_x^m]$.

Now our main result of this section is stated as follows.
\begin{thm}\label{thm2}
Let $u_0 \in \mathscr{S}(\mathbb{R}^n)$, $s \in \mathbb{R}$ and $T > 0$. Assume $(a_{kj}), (b_{\ell}), g$ satisfy the assumptions {$\bf (H_1), (H_2), (H_3)$}, and $g\in C^\infty ([0, T]; \mathscr{S}(\mathbb{R}^n))$. Then the Cauchy problem \eqref{1-1} admits a unique solution $u \in C^\infty \left( [0, T], \mathscr{S}(\mathbb{R}^n) \right)$. Moreover, there exists a constant $C_T > 0$ such that for any $t \in [0, T]$,
\begin{equation}\label{energy}
\begin{aligned}
&\left\| u(t) \right\|^2_{H^s(\mathbb{R}^n)} + \Lambda^{-1} \sum_{k = 1}^{m_0} \int_0^t \left\| \partial_{x_k} u(\tau) \right\|^2_{H^s(\mathbb{R}^n)} d\tau\\
&\leq C_T\Big( \left\| u_0 \right\|^2_{H^s(\mathbb{R}^n)} +  \left\| g \right\|^2_{L^1 \left( [0, T], H^s(\mathbb{R}^n) \right)}\Big).
\end{aligned}
\end{equation}
\end{thm}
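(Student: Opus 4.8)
The plan is to realize the solution as the limit of a family of uniformly parabolic (non-degenerate) approximations, with all quantitative control coming from a single energy identity that is closed using Lemmas \ref{prelemma-1} and \ref{prelemma-2+}. I would first establish the a priori estimate \eqref{energy} for a smooth solution, and more generally its analogue in every weighted space $H^m_\mu(\mathbb{R}^n)$; the latter is what upgrades the solution to Schwartz regularity, while \eqref{energy} itself yields uniqueness.

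For the energy identity I would differentiate $\|u(t)\|^2_{H^s}$ in time, substitute $\partial_t u = -Xu - Yu + \sum_{k,j} a_{kj}\partial_{x_k}\partial_{x_j}u + g$ from \eqref{1-1}, and estimate each term in the $H^s$ inner product:
\begin{equation*}
\tfrac{1}{2}\tfrac{d}{dt}\|u\|^2_{H^s} = -\left(Xu,u\right)_{H^s} - \left(Yu,u\right)_{H^s} + \Big(\sum_{k,j=1}^{m_0} a_{kj}\partial_{x_k}\partial_{x_j}u, u\Big)_{H^s} + \left(g,u\right)_{H^s}.
\end{equation*}
The transport terms are controlled by (the $H^s$ form of) Lemma \ref{prelemma-1}; the degenerate diffusion term is bounded above by Lemma \ref{prelemma-2+}, which simultaneously produces the favorable dissipative quantity $(2\Lambda)^{-1}\sum_{k=1}^{m_0}\|\partial_{x_k}u\|^2_{H^s}$; and the source is handled by Cauchy--Schwarz. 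This gives the differential inequality
\begin{equation*}
\tfrac{d}{dt}\|u\|^2_{H^s} + \Lambda^{-1}\sum_{k=1}^{m_0}\|\partial_{x_k}u\|^2_{H^s} \le C\|u\|^2_{H^s} + 2\|g\|_{H^s}\|u\|_{H^s},
\end{equation*}
whence a standard Grönwall argument yields the pointwise bound $\|u(t)\|_{H^s}\le C_T\big(\|u_0\|_{H^s}+\|g\|_{L^1([0,T],H^s)}\big)$; integrating the inequality once more and inserting this bound produces \eqref{energy}. Replacing $H^s$ by $H^m_\mu$ and invoking the same lemmas (which are stated precisely in these weighted spaces) gives $\|u(t)\|_{H^m_\mu}\le C_{m,\mu}\big(\|u_0\|_{H^m_\mu}+\|g\|_{L^1([0,T],H^m_\mu)}\big)$ for every $m,\mu\in\mathbb{N}$.

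For existence I would regularize by adding a vanishing viscosity term, solving for each $\varepsilon>0$ the uniformly parabolic problem obtained from \eqref{1-1} by subtracting $\varepsilon\Delta_x u^\varepsilon$; solvability of this non-degenerate problem in $C^\infty([0,T],\mathscr{S}(\mathbb{R}^n))$ is classical. The key point is that $-\varepsilon(\Delta_x u^\varepsilon, u^\varepsilon)_{H^m_\mu}$ contributes the nonnegative quantity $\varepsilon\|\nabla_x u^\varepsilon\|^2_{H^m_\mu}$ up to lower-order commutators with $\left< x \right>^\mu\Lambda_x^m$, so the energy estimates above hold \emph{uniformly} in $\varepsilon$ in every $H^m_\mu$. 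Weak-$*$ compactness extracts a limit $u$ in each $L^\infty([0,T],H^m_\mu)$; since the equation is linear and $\varepsilon\Delta_x u^\varepsilon\to 0$ by the uniform $H^{m+2}_\mu$ bound, one passes to the limit to obtain a distributional solution in $\bigcap_{m,\mu} L^\infty([0,T],H^m_\mu)=L^\infty([0,T],\mathscr{S})$. Continuity and then $C^\infty$-regularity in time follow by a bootstrap, since the equation expresses $\partial_t u$ as a sum of Schwartz-valued terms and the coefficients are smooth in $t$. Uniqueness is immediate from \eqref{energy} applied to the difference of two solutions.

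The main obstacle will be the degeneracy: the diffusion controls only the $m_0$ derivatives $\partial_{x_1},\dots,\partial_{x_{m_0}}$, so the energy identity never furnishes coercivity in the full gradient, and one must ensure that everything outside the good dissipative term---the transport operators $X,Y$, the first-order remainder $\sum_k(\partial_{x_k}a_{kj})\partial_{x_j}$, and all commutators with the weight $\left< x \right>^\mu$ and the multiplier $\Lambda_x^m$---is absorbed into the undifferentiated term $C\|u\|^2_{H^m_\mu}$ before Grönwall is applied. This is exactly the role of Lemmas \ref{lemma-0}, \ref{prelemma-1}, \ref{prelemma-2} and \ref{prelemma-2+}, and the delicate bookkeeping is to keep the resulting constants independent of $\varepsilon$, so that the viscosity limit survives.
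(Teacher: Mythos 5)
Your derivation of the energy inequality \eqref{energy} — and of its analogues in every weighted space $H^m_\mu$ — is the same as the paper's: substitute the equation into $\frac{d}{dt}\|u\|^2$, control the transport terms by Lemma \ref{prelemma-1}, extract the dissipation $\Lambda^{-1}\sum_k\|\partial_{x_k}u\|^2$ from the degenerate diffusion via Lemma \ref{prelemma-2+}, and apply Gr\"onwall; uniqueness from \eqref{energy} is also identical. Where you genuinely diverge is the existence mechanism. The paper does not regularize the equation at all: it works with the formal adjoint $P$ of the evolution operator in each $H^m_\mu$, proves the backward a priori estimate \eqref{sec3} for $P$ using the \emph{same} Lemmas \ref{prelemma-1} and \ref{prelemma-2+}, and then obtains the solution by Hahn--Banach extension of the functional $\mathcal{G}(Pf)=(u_0,f(0))_{H^m_\mu}+\int_0^T(g,f)_{H^m_\mu}$ together with the Riesz representation theorem. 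Your vanishing-viscosity route is a legitimate alternative and has the virtue of producing an explicit approximating sequence, but it shifts the burden onto the solvability of the regularized problem, which you dismiss as ``classical.'' That step deserves more care than you give it: even with $-\varepsilon\Delta_x$ added, the operator is not a uniformly parabolic operator with bounded coefficients, because the drift $X=\sum b_{kj}x_k\partial_{x_j}$ has linearly growing coefficients, and producing a solution in $C^\infty([0,T],\mathscr{S}(\mathbb{R}^n))$ (i.e.\ with control in every weighted norm $H^m_\mu$) for such an Ornstein--Uhlenbeck-type perturbation requires either a semigroup representation or, in effect, the very duality argument the paper uses. So your scheme closes, but only after supplying for the viscous problem essentially the same functional-analytic input that the paper applies directly to the degenerate equation; the paper's approach buys a shorter path by skipping the approximation layer, while yours buys constructiveness at the cost of an extra existence theorem that is stated but not proved.
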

\begin{proof}
For the initial datum $u_0\in \mathscr{S}(\mathbb{R}^n)$,  we first prove the existence of solution
$$
u \in L^\infty([0, T];H^{m}_\mu (\mathbb{R}^n) )
$$
for the Cauchy problem \eqref{1-1} for any $m, \mu \in \mathbb{N}$.

Let
$$
P = -\partial_t + \big( X + Y - \sum_{k, j = 1}^{m_0} a_{kj}  \partial_{x_k} \partial_{x_j} \big)^{*},
$$
here the adjoint operator $\big(\ \cdot\ \big)^{*}$ is taken with respect to the scalar product in the space $H^m_{\mu}(\mathbb{R}^n)$. For any $0 \leq t \leq T$ and $f \in C^\infty ([0, T], \mathscr{S}(\mathbb{R}^n))$ with $\partial^\alpha f(T) = 0$ for any $\alpha \in \mathbb{N}^n$, we have
$$
\left( f, Pf \right)_{H^m_\mu} = -\frac{1}{2} \frac{d}{dt} \left\| f(t) \right\|^2_{H^m_\mu} + \left( (X + Y) f, f \right)_{H^m_\mu}  - \Big( \sum_{k, j = 1}^{m_0}  a_{kj}\partial_{x_k} \partial_{x_j} f, f \Big)_{H^m_\mu}.
$$
By using the Lemma \ref{prelemma-1} and Lemma \ref{prelemma-2+}, we can obtain the following inequality 
$$
- \frac{d}{dt} \left\| f(t) \right\|^2_{H^m_\mu} + \Lambda^{-1} \sum_{k = 1}^{m_0} \left\| \partial_{x_k} f \right\|_{H^m_\mu}^2 \le C_0 \left\| f \right\|^2_{H^m_\mu} + 2 \left\| f \right\|_{H^m_\mu} \left\| Pf \right\|_{H^m_\mu},
$$
where $C_0 = 2(C_2 + C_4)$.
Since $\partial^\alpha f(T) = 0$ for all $\alpha \in \mathbb{N}^n$, we have 
\begin{equation*}
\begin{aligned}
& \left\| f(t) \right\|^2_{H^m_\mu} + \Lambda^{-1} \sum_{k = 1}^{m_0} \int_t^T e^{C_0(\tau - t)} \left\| \partial_{x_k} f(\tau) \right\|^2_{H^m_\mu} d\tau \\
&\qquad \leq 2e^{C_0 T} \left\| f \right\|_{L^\infty \left( [0, T], H^m_\mu (\mathbb{R}^n) \right)} \left\| Pf \right\|_{L^1 \left( [0, T], H^m_\mu (\mathbb{R}^n) \right)},
\end{aligned}
\end{equation*}
which leads to
\begin{equation}\label{sec3}
\left\| f \right\|_{L^\infty \left( [0, T], H^m_\mu (\mathbb{R}^n) \right)} \leq 2e^{C_0 T} \left\| Pf \right\|_{L^1 \left( [0, T], H^m_\mu (\mathbb{R}^n) \right)}.
\end{equation}
For $m, \mu \in \mathbb{N}$, we consider the subspace of $L^1 \left( [0, T], H^m_\mu (\mathbb{R}^n) \right)$,
$$
\mathcal{F} = \left\{ Pf; f \in C^\infty \left( [0, T], \mathscr{S}(\mathbb{R}^n) \right), \partial^\alpha f(T) = 0, \forall \alpha \in \mathbb{N}^n \right\} .
$$
Define the linear functional
\begin{equation*}
\begin{aligned}
\mathcal{G} :\ \ \mathcal{F}\ \ \ &\rightarrow \ \mathbb{R} \\
\omega=Pf\ &\mapsto \ \left( u_0, f(0) \right)_{H^m_\mu (\mathbb{R}^n)} + \int_0^T \left( g(\tau), f(\tau) \right)_{H^m_\mu (\mathbb{R}^n)} d\tau.
\end{aligned}
\end{equation*}
Let $Pf_1 = Pf_2$ with $f_1, f_2 \in C^\infty \left( \left[ 0, T \right], \mathscr{S}(\mathbb{R}^n) \right)$ and $f_1 (T) = f_2 (T) = 0$. Then from \eqref{sec3}, it follows that
$$
\left\| f_1 - f_2 \right\|_{L^\infty \left( \left[0, T \right], H^m_\mu (\mathbb{R}^n) \right)} \leq 2e^{C_0 T} \left\| Pf_1 - Pf_2 \right\|_{L^1 \left( [0, T], H^m_\mu (\mathbb{R}^n) \right)} = 0,
$$
hence $f_1 = f_2$, which leads to the fact that the operator $P$ is injective. Therefore, the linear functional $\mathcal{G}$ is well-defined. Moreover, we can obtain
\begin{equation*}
\begin{aligned}
\left| \mathcal{G}\left( Pf \right) \right|
&\leq \big( \left\| u_0 \right\|_{H^m_\mu (\mathbb{R}^n)} + \int_0^T \left\| g(\tau) \right\|_{H^m_\mu (\mathbb{R}^n)} d\tau \big) \ \left\| f \right\|_{L^\infty \left( [0, T], H^m_\mu (\mathbb{R}^n) \right)} \\
&\leq 2e^{c_0 T} \big( \left\| u_0 \right\|_{H^m_\mu (\mathbb{R}^n)} + \int_0^T \left\| g(\tau) \right\|_{H^m_\mu (\mathbb{R}^n)} d\tau \big) \ \left\| Pf \right\|_{L^1 \left( [0, T], H^m_\mu (\mathbb{R}^n) \right)}.
\end{aligned}
\end{equation*}
Hence $\mathcal{G}$ is a continuous linear functional on $\big( \mathcal{F}, \left\| \cdot \right\|_{L^1 \left( [0, T], H^m_\mu (\mathbb{R}^n) \right)} \big)$. By using the Hahn-Banach theorem, $\mathcal{G}$ can be extended to a continuous linear functional on $L^1 \left( [0, T], H^m_\mu (\mathbb{R}^n) \right)$ with its norm can be bounded by
$$
2e^{C_0 T} \Big( \left\| u_0 \right\|_{H^m_\mu (\mathbb{R}^n)} + \int_0^T \left\| g(\tau) \right\|_{H^m_\mu (\mathbb{R}^n)} d\tau \Big).
$$
It follows the Riesz representation theorem that there exists a unique function $u \in L^\infty \left( [0, T], H^m_\mu (\mathbb{R}^n) \right)$ satisfying
$$
\mathcal{G}(\omega) = \int_0^T \left( u(\tau), \omega(\tau) \right)_{H^m_\mu (\mathbb{R}^n)} d\tau,\ \ \  \forall \omega \in \mathcal{F},
$$
and
$$
\left\| u \right\|_{L^\infty \left( [0, T], H^m_\mu (\mathbb{R}^n) \right)} \leq 2e^{C_0 T} \left( \left\| u_0 \right\|_{H^m_\mu (\mathbb{R}^n)} + \left\| g \right\|_{L^1 \left( [0, T], H^m_\mu (\mathbb{R}^n) \right)} \right),
$$
these imply that for all $f \in C^\infty \left( [0, T], \mathscr{S}(\mathbb{R}^n) \right)$,
\begin{equation*}
\begin{aligned}
\mathcal{G} \left( Pf \right) &= \int_0^T \left( u(\tau), (Pf)(\tau) \right)_{H^m_\mu (\mathbb{R}^n)} d\tau \\
&= \left( u_0, f(0) \right)_{H^m_\mu (\mathbb{R}^n)} + \int_0^T \left( g(\tau), f(\tau) \right)_{H^m_\mu (\mathbb{R}^n)} d\tau.
\end{aligned}
\end{equation*}
Therefore, for any $ m, \mu \in \mathbb{N}$, $u \in L^\infty \left( [0, T], H^m_\mu (\mathbb{R}^n) \right)$ is a unique solution of the Cauchy problem \eqref{1-1}. We have thus proved the Cauchy problem \eqref{1-1} admits a unique solution belonging to
$ L^\infty \left( [0, T], \mathscr{S}(\mathbb{R}^n) \right)$.

The regularity with respect to the $t$ variable can be obtained directly by using the regularity of the initial datum. Thus, we have
$$
u \in C^\infty \left( [0, T], \mathscr{S}(\mathbb{R}^n) \right).
$$

Now for $u \in C^\infty \left( [0, T], \mathscr{S}(\mathbb{R}^n) \right)$, by using the energy method, the solution of the Cauchy problem \eqref{1-1} satisfies the following inequality,  
$$
\frac{d}{dt} \left\| u(t) \right\|^2_{H^s} + \Lambda^{-1} \sum_{k = 1}^{m_0} \left\| \partial_{x_k} u \right\|^2_{H^s} \leq \left\| g (t) \right\|^2_{H^s} + C_5 \left\| u(t) \right\|^2_{H^s},
$$
here we can use similar computations as in Lemmas \ref{prelemma-1}, \ref{prelemma-2} and \ref{prelemma-2+}. Thus we have
$$
\left\| u(t) \right\|^2_{H^s} + \Lambda^{-1} \sum_{k = 1}^{m_0} \int_0^t \left\| \partial_{x_k} u(s) \right\|^2_{H^s} ds\leq C_T \left( \left\| u_0 \right\|_{H^s}^2 + \left\| g \right\|^2_{L^1(\left[0, T \right], H^s(\mathbb{R}^n))} \right) .
$$
\end{proof}

By combining the above result with compactness and the uniqueness of the solution, we can also establish the existence of a weak solution for the initial datum belonging to $H^s$.
\begin{cor}\label{weak}
Let $T > 0, s \in \mathbb{R}$ and $u_0 \in H^s(\mathbb{R}^n)$. Assume $(a_{kj}), (b_{\ell}), g$ satisfy the assumptions {$\bf (H_1), (H_2), (H_3)$}.
Then the Cauchy problem \eqref{1-1} admits a unique solution $u \in L^\infty \left( [0, T], H^s(\mathbb{R}^n) \right)$ which satisfies \eqref{energy}.
\end{cor}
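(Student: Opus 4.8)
The plan is to obtain the weak solution for $u_0\in H^s$ as a limit of the smooth solutions furnished by Theorem \ref{thm2}, and to control the entire passage to the limit through the energy estimate \eqref{energy} together with the linearity of \eqref{1-1}. First I would regularize the data. Since $\mathscr{S}(\mathbb{R}^n)$ is dense in $H^s(\mathbb{R}^n)$, choose $u_0^{(j)}\in\mathscr{S}(\mathbb{R}^n)$ with $u_0^{(j)}\to u_0$ in $H^s$. The source $g$ is \emph{not} assumed to lie in $C^\infty([0,T],\mathscr{S})$ (only ${\bf (H_3)}$ is available), so I would also truncate it: set $g^{(j)}(t,\cdot)=\chi(\cdot/j)\,g(t,\cdot)$ with $\chi\in C_c^\infty(\mathbb{R}^n)$, $\chi\equiv 1$ near the origin. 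Because ${\bf (H_3)}$ gives $\sup_{0\le t\le T}\|g(t)\|_{H^m(\mathbb{R}^n)}<\infty$ for every $m$ and $g\in C^\infty([0,T]\times\mathbb{R}^n)$, each $g^{(j)}$ has compact $x$-support and thus lies in $C^\infty([0,T],\mathscr{S}(\mathbb{R}^n))$, while $g^{(j)}\to g$ in $L^1([0,T],H^s)$ (for $s\le 0$ this is immediate from $L^2$-convergence since $\|\cdot\|_{H^s}\le\|\cdot\|_{L^2}$; for $s>0$ it follows from the standard cut-off estimate in $H^{\lceil s\rceil}$ and dominated convergence in $t$). Applying Theorem \ref{thm2} to the data $(u_0^{(j)},g^{(j)})$ then yields solutions $u^{(j)}\in C^\infty([0,T],\mathscr{S})$ of \eqref{1-1}, each satisfying \eqref{energy}.

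Next I would pass to the limit. By linearity $u^{(j)}-u^{(\ell)}$ solves \eqref{1-1} with data $(u_0^{(j)}-u_0^{(\ell)},\,g^{(j)}-g^{(\ell)})$, so \eqref{energy} yields
\begin{equation*}
\sup_{[0,T]}\|u^{(j)}-u^{(\ell)}\|_{H^s}^2+\Lambda^{-1}\sum_{k=1}^{m_0}\int_0^T\|\partial_{x_k}(u^{(j)}-u^{(\ell)})\|_{H^s}^2\,d\tau\le C_T\big(\|u_0^{(j)}-u_0^{(\ell)}\|_{H^s}^2+\|g^{(j)}-g^{(\ell)}\|_{L^1([0,T],H^s)}^2\big),
\end{equation*}
whose right-hand side tends to $0$. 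Hence $\{u^{(j)}\}$ is Cauchy in $C([0,T],H^s)$ and $\{\partial_{x_k}u^{(j)}\}$ in $L^2([0,T],H^s)$; denote the limit by $u$. (Alternatively, matching the hint, one may use only the uniform bound from \eqref{energy} and weak-$*$ compactness of $L^\infty([0,T],H^s)=\big(L^1([0,T],H^{-s})\big)^{*}$ to extract a convergent subsequence.) Writing each $u^{(j)}$ in distributional form — testing \eqref{1-1} against $\phi\in C_c^\infty([0,T[\times\mathbb{R}^n)$ and integrating the second-order term by parts onto $\phi$, so that the smooth coefficients only multiply test functions (cf. Lemma \ref{lemma-0}) — every term is continuous for the topologies just obtained, and letting $j\to\infty$ shows that $u$ is a weak solution of \eqref{1-1} with data $(u_0,g)$. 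Passing to the limit in \eqref{energy} (the left-hand side is lower semicontinuous, the right-hand side converges) gives \eqref{energy} for $u$.

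For uniqueness I would use transposition, which avoids differentiating a merely $L^\infty([0,T],H^s)$ function in time. If $v\in L^\infty([0,T],H^s)$ is a weak solution with $u_0=0$ and $g=0$, then for every $\omega\in C_c^\infty(]0,T[\times\mathbb{R}^n)$ I would solve the backward adjoint Cauchy problem $Pf=\omega$, $f(T)=0$, where $P$ is the operator from the proof of Theorem \ref{thm2}; after the time reversal $t\mapsto T-t$ this is again a degenerate parabolic equation of exactly the structure \eqref{1-1}, hence solvable in $C^\infty([0,T],\mathscr{S})$. Pairing $v$ against $Pf$ and using that $v$ has vanishing data then forces $\int_0^T(v,\omega)\,d\tau=0$ for a dense family of $\omega$, whence $v\equiv 0$; two weak solutions with the same data therefore coincide.

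I expect this uniqueness step to be the main obstacle. The energy method that produces \eqref{energy} is genuinely run on smooth solutions, and applying it directly to an $L^\infty([0,T],H^s)$ weak solution — particularly for negative $s$, where $v$ and $\partial_t v$ live in dual spaces — would require a Lions--Magenes type regularization; the transposition argument sidesteps this but shifts the burden onto verifying that the formal adjoint of $\partial_t+X+Y-\sum a_{kj}\partial_{x_k}\partial_{x_j}$ still obeys H\"ormander's condition ${\bf (H_1)}$ and the partial ellipticity ${\bf (H_2)}$, so that Theorem \ref{thm2} is applicable to it. This holds because the adjoint retains the same principal matrix $(a_{kj})$ and its drift $X^{*}=-X+\mathrm{const}$ generates the same iterated commutators as in \eqref{initial vector fields}, but this is exactly the point that must be checked carefully.
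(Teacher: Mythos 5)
Your proposal is correct and follows essentially the same route as the paper, whose proof of this corollary is only the one-sentence remark that it follows from Theorem \ref{thm2} ``by combining the above result with compactness and the uniqueness of the solution'': approximate the data by Schwartz functions, apply Theorem \ref{thm2}, and pass to the limit using the energy estimate \eqref{energy}. Your refinements --- extracting strong convergence from linearity instead of weak-$*$ compactness, and making uniqueness explicit via the transposed adjoint problem (whose structure indeed still satisfies ${\bf (H_1)}$--${\bf (H_3)}$) --- simply fill in details the paper leaves implicit.
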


\section{Time-dependent auxiliary vector fields}\label{sectionsec}
In this section, we introduce a family of well-chosen time-dependent auxiliary vector fields and
study their commutators with the partial differential operators of \eqref{1-1}.

We now introduce antiderivative operator. Let $f(t)$ a continuous function on $[0, T]$, we define
$$
I(f)(t)=\int^t_0 f(s) ds,\quad t\in [0, T],
$$
and for $k\in\mathbb{N}$, $k$-time antiderivative operators of $f$ is defined by the following iteration: 
$$
I^0(f)(t)=f(t),\ \ I^k(f)(t)=I(I^{k-1}(f))(t),\quad t\in [0, T],
$$
then $\frac{d}{dt}I^k(f)(t)=I^{k-1}(f)(t)$ for $k\ge 1$. For some $\delta>1$, setting
$$
h_\delta(t)=t^\delta,\quad t\in [0, T],
$$
then
$$
I^q(h_\delta)(t)=\frac{\Gamma(\delta + 1)}{\Gamma(\delta + 1 + q)} t^{\delta + q},
$$
where Gamma function is
$$
\Gamma (z) = \int_0^{+\infty} s^{z - 1} e^{-s} ds, \ \ z > 0,
$$
with the property
$$
z\ \Gamma(z)=\Gamma(z+1).
$$
Now we introduce the auxiliary vector fields as follows.
\begin{definition}\label{deff}
For $p \in \left\{ 1, \dots, m_0 \right\}$ and $\delta>1$, we define
\begin{equation}\label{2-1}
H_{\delta, p} = \sum_{q = 0}^r I^q(h_\delta)(t)\ X_{p, q},
\end{equation}
where $\{X_{p, q}\}$ are the vector fields defined in \eqref{initial vector fields}. So
$\{H_{\delta, p}\}$ are vector fields with coefficients depends on $t\in [0, T]$.
\end{definition}
The significant property of the above auxiliary vector fields is stated as follows:
\begin{lemma}\label{lemma1}
For $p \in \left\{ 1, \dots, m_0 \right\}$, and any $d \in \mathbb{N}^+$, we have
\begin{equation}\label{2-3}
\left[ \partial_t + X, H^d_{\delta, p} \right] = d \delta t^{\delta - 1}\partial_{x_p} H^{d - 1}_{\delta, p}   .
\end{equation}
\end{lemma}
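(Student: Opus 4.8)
The plan is to prove \eqref{2-3} by induction on $d$, with essentially all of the content concentrated in the base case $d=1$; the inductive step then follows formally from the Leibniz (derivation) rule for commutators.

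For the base case I would split $[\partial_t + X, H_{\delta,p}] = [\partial_t, H_{\delta,p}] + [X, H_{\delta,p}]$ and handle the two pieces separately. Since each $X_{p,q}$ has constant coefficients, the only $t$-dependence in $H_{\delta,p}=\sum_{q=0}^r I^q(h_\delta)(t)\,X_{p,q}$ sits in the scalar factors, so $[\partial_t, H_{\delta,p}]$ is obtained simply by differentiating these factors in $t$; using $\frac{d}{dt}I^0(h_\delta)=\delta t^{\delta-1}$ and $\frac{d}{dt}I^q(h_\delta)=I^{q-1}(h_\delta)$ for $q\ge 1$, this produces the term $\delta t^{\delta-1}X_{p,0}$ together with the sum $\sum_{q=1}^r I^{q-1}(h_\delta)\,X_{p,q}$. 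For the second piece I would commute $X$ past the constant-coefficient fields and invoke the defining relation $X_{p,q+1}=[X_{p,q},X]$, so that $[X,X_{p,q}]=-X_{p,q+1}$. Here Hörmander's condition $\mathbf{(H_1)}$ enters decisively, since $X_{p,r+1}=0$ removes the top ($q=r$) contribution and, after reindexing, yields exactly $-\sum_{q=1}^r I^{q-1}(h_\delta)\,X_{p,q}$. Adding the two pieces, these sums cancel term by term and leave $[\partial_t+X,H_{\delta,p}]=\delta t^{\delta-1}\partial_{x_p}$, which is \eqref{2-3} for $d=1$.

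For the inductive step I would assume \eqref{2-3} at level $d-1$, write $H^d_{\delta,p}=H_{\delta,p}\,H^{d-1}_{\delta,p}$, and apply $[A,BC]=[A,B]C+B[A,C]$ with $A=\partial_t+X$. The first summand is $\delta t^{\delta-1}\partial_{x_p}H^{d-1}_{\delta,p}$ by the base case, and the second is $(d-1)\delta t^{\delta-1}\,H_{\delta,p}\,\partial_{x_p}H^{d-2}_{\delta,p}$ by the inductive hypothesis. To combine them I need two elementary commutativity facts: that $H_{\delta,p}$ commutes with multiplication by the scalar function $\delta t^{\delta-1}$, which holds because $H_{\delta,p}$ involves no $\partial_t$; and that $H_{\delta,p}$ commutes with $\partial_{x_p}$, which holds because $\partial_{x_p}$ and every $X_{p,q}$ are constant-coefficient vector fields, so $[\partial_{x_p},X_{p,q}]=0$. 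Sliding $H_{\delta,p}$ past both factors turns the second summand into $(d-1)\delta t^{\delta-1}\partial_{x_p}H^{d-1}_{\delta,p}$, and the two summands add to $d\,\delta t^{\delta-1}\partial_{x_p}H^{d-1}_{\delta,p}$, closing the induction.

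I expect the only genuine obstacle to be the cancellation in the base case: it works precisely because the antiderivative index shift $\frac{d}{dt}I^q=I^{q-1}$ is matched by the commutator index shift $X_{p,q}\mapsto X_{p,q+1}$ coming from bracketing with $X$, and because $\mathbf{(H_1)}$ truncates the sum at $q=r$ so that no boundary term survives the telescoping. Everything after that is bookkeeping driven by the Leibniz rule and the two commutativity observations.
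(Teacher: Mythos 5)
Your proof is correct and follows essentially the same route as the paper: the identical splitting $[\partial_t,H_{\delta,p}]+[X,H_{\delta,p}]$ with the telescoping cancellation driven by $\frac{d}{dt}I^q=I^{q-1}$ against $[X,X_{p,q}]=-X_{p,q+1}$ and the truncation $X_{p,r+1}=0$ from $\mathbf{(H_1)}$, followed by induction via the Leibniz rule for commutators and the observation $[\partial_{x_p},H^k_{\delta,p}]=0$. The only cosmetic difference is that you factor $H^d_{\delta,p}=H_{\delta,p}H^{d-1}_{\delta,p}$ while the paper factors $H^{d+1}_{\delta,p}=H^{d}_{\delta,p}H_{\delta,p}$, which changes nothing.
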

Remark that here we need the condition $\delta>1$ to ensure the continuity of $h'_\delta(t)=\delta t^{\delta - 1}$ on $[0, T]$ .
\begin{proof}
We would prove \eqref{2-3} by induction on the index $d$. For the case of $d = 1$, since
$$
\left[ \partial_t + X, H_{\delta, p} \right] = \left[ \partial_t, H_{\delta, p} \right] + \left[ X, H_{\delta, p} \right]
$$
Note the definition of \eqref{initial vector fields}, we compute the right terms of the above equality separately, for any $f(t, x) \in C^\infty ([0, T], \mathscr{S}(\mathbb{R}^n))$,
\begin{align*}
\left[\partial_t, H_{\delta, p} \right] f &= \sum_{q = 0}^r (\partial_tI^q(h_\delta)(t)) X_{p, q}f\\
&= \delta t^{\delta - 1} X_{p, 0} f +\sum_{q = 1}^r I^{q-1}(h_\delta)(t) X_{p, q}f.
\end{align*}
and
\begin{align*}
\left[ X, H_{\delta, p} \right] f &= \sum_{q = 0}^r I^q(h_\delta)(t) \left[ X, X_{p, q} \right] f \\
&= -\sum_{q = 0}^r I^q(h_\delta)(t) X_{p, q+1} f,
\end{align*}
by the assumption ${\bf {(H_1)}}$, we have  $\left[ X, X_{p, r} \right] f =-X_{p, r+1} f= 0$, we get
$$
\left[ X, H_{\delta, p} \right] f = -\sum_{q = 1}^r I^{q-1}(h_\delta)(t) X_{p, q} f.
$$
Thus we have
$$
\left[ \partial_t + X, H_{\delta, p} \right] = \delta t^{\delta - 1} X_{p, 0} = \delta t^{\delta - 1} \partial_{x_p}.
$$
Assume now \eqref{2-3} holds true for $d \in \mathbb{N}^+$, then we study the case of $d + 1$, we have
\begin{equation*}
\begin{aligned}
\left[ \partial_t + X, H_{\delta, p}^{d + 1} \right] &= \left[ \partial_t + X, H_{\delta, p}^d H_{\delta, p} \right] \\
&= \left[ \partial_t + X, H_{\delta, p}^d \right] H_{\delta, p} + H_{\delta, p}^d \left[ \partial_t + X, H_{\delta, p} \right] \\
&= d \delta t^{\delta - 1} \partial_{x_p} H_{\delta, p}^{d - 1} H_{\delta, p} + H_{\delta, p}^d  \left( \delta t^{\delta - 1} \partial_{x_p} \right) \\
&= (d + 1) \delta t^{\delta - 1} \partial_{x_p} H_{\delta, p}^d,
\end{aligned}
\end{equation*}
here we use $ [\partial_{x_p}, H_{\delta, p}^{k}] =0$, and the following rule about the commutator,
$$
\left[ T_1, T_2 T_3 \right] = \left[ T_1, T_2 \right] T_3 + T_2 \left[ T_1, T_3 \right].
$$
where $T_1, T_2, T_3$ are operators.
\end{proof}

Throughout the paper, we assume that $(-k) ! = 1$ for any $k \in \mathbb{N}$. Now we study the estimates of $(a_{kj})$ and $(b_\ell)$.
\begin{lemma}\label{lemma2+}
Let $T > 0$ and $s \in \mathbb{R}$. Assume that $(a_{kj})$ and $(b_\ell)$ satisfying the assumption ${\bf (H_3)}$, then there exists a constant $\bar{B} > 0$ such that, for any $d \in \mathbb{N}$, $p, k, j \in \left\{ 1, \dots, m_0 \right\}, \ell \in \left\{ 0, \dots, m_0 \right\}$ and $0 \leq t \leq T$,
\begin{equation}\label{happy+}
\left\| H^d_{\delta, p} a_{kj} (t) \right\|_{H^{s_1} \left( \mathbb{R}^n \right)} \leq \bar{B}^{d + 1} (d - 2) !, \ \
\left\| H^d_{\delta, p} b_{\ell} (t) \right\|_{H^{s_1}\left( \mathbb{R}^n \right)} \leq \bar{B}^{d + 1} (d - 2) !, 
\end{equation}
 where $s_1 = |s| + \frac{n}{2} + 1$. 
\end{lemma}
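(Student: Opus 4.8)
The plan is to exploit the fact that, although $H_{\delta, p}$ has time-dependent coefficients, its spatial part is a \emph{constant-coefficient} operator in $x$, so that $H^d_{\delta, p} a_{kj}$ is merely a finite linear combination of the $x$-derivatives $\partial^\alpha a_{kj}$ of order $|\alpha| = d$, with explicit time-dependent scalar weights. The analyticity hypothesis $\mathbf{(H_3)}$ then controls each such derivative, and the estimate reduces to elementary factorial bookkeeping.

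First I would reduce $H_{\delta, p}$ to a canonical form. By the definition \eqref{initial vector fields} and the assumption $\mathbf{(H_1)}$ (which truncates the family at $q = r$), each $X_{p, q}$ is a vector field with real constant coefficients, say $X_{p, q} = \sum_{l = 1}^n c^{(q)}_{p, l} \partial_{x_l}$. Hence $H_{\delta, p} = \sum_{l = 1}^n \beta_{p, l}(t) \partial_{x_l}$, where $\beta_{p, l}(t) = \sum_{q = 0}^r I^q(h_\delta)(t)\, c^{(q)}_{p, l}$ depends only on $t$. Since each $I^q(h_\delta)(t) = \frac{\Gamma(\delta + 1)}{\Gamma(\delta + 1 + q)} t^{\delta + q}$ is continuous on $[0, T]$ and $q$ runs over the finite set $\{0, \dots, r\}$, there is a constant $M > 0$ with $|\beta_{p, l}(t)| \le M$ for all $t \in [0, T]$ and all $p, l$. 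Because the $\beta_{p, l}(t)$ are independent of $x$, the operators $\beta_{p, l}(t) \partial_{x_l}$ commute pairwise, and the multinomial theorem gives $H^d_{\delta, p} = \sum_{|\alpha| = d} \frac{d!}{\alpha!}\, \beta_p(t)^\alpha\, \partial^\alpha_x$, where $\beta_p(t)^\alpha = \prod_{l} \beta_{p, l}(t)^{\alpha_l}$, so that $|\beta_p(t)^\alpha| \le M^d$.

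Next I would upgrade $\mathbf{(H_3)}$ from an $L^2$ to an $H^{s_1}$ statement, uniformly in $t$. Since $s_1 > 0$, one has $\|f\|_{H^{s_1}} \le \|f\|_{H^N} \le C_N \sum_{|\gamma| \le N} \|\partial^\gamma f\|_{L^2}$ with $N = \lceil s_1 \rceil$; applying this to $f = \partial^\alpha a_{kj}(t)$ and combining $\mathbf{(H_3)}$ with $(\alpha + \gamma)! \le 2^{|\alpha| + |\gamma|} \alpha!\, \gamma!$ and the boundedness of $\gamma$ yields a constant $\tilde B > 0$ (depending on $B, n, s$) with $\|\partial^\alpha a_{kj}(t)\|_{H^{s_1}} \le \tilde B^{|\alpha| + 1} \alpha!$ for all $t \in [0, T]$, and likewise for $b_\ell$. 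Combining this with the expansion above gives $\|H^d_{\delta, p} a_{kj}(t)\|_{H^{s_1}} \le \sum_{|\alpha| = d} \frac{d!}{\alpha!} M^d \tilde B^{d + 1} \alpha! = M^d \tilde B^{d + 1} d!\, \binom{d + n - 1}{n - 1}$.

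The last step, which is the only point requiring care, is to pass from the natural bound $d!$ to the stated $(d - 2)!$. Writing $d! = d(d - 1)(d - 2)!$, the prefactor differs from the target only by the polynomial $P(d) = d(d - 1) \binom{d + n - 1}{n - 1}$; since any fixed polynomial satisfies $P(d) \le \rho^{d + 1}$ for all $d \in \mathbb{N}$ once $\rho > 1$ is chosen large enough, I can absorb $P(d)$ by enlarging the base, obtaining $\bar B^{d + 1}(d - 2)!$ for a suitable $\bar B > 0$ (the convention $(-k)! = 1$ handling the cases $d = 0, 1$). The argument for $b_\ell$ is identical. I expect no conceptual obstacle here: the content is simply that a constant-coefficient order-$d$ operator applied to an analytic function is governed by $d!$, and the apparently sharper $(d - 2)!$ form is just a rewriting in which two factorial factors have been traded against the exponential weight $\bar B^{d + 1}$.
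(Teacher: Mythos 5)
Your proposal is correct and follows essentially the same route as the paper: both reduce $H_{\delta,p}$ to a first-order operator with $x$-independent, uniformly bounded coefficients via \eqref{H-p}, expand $H^d_{\delta,p}$ by the multinomial theorem, control the resulting order-$d$ derivatives of $a_{kj}$, $b_\ell$ in $H^{s_1}$ through $\mathbf{(H_3)}$ together with $(\alpha+\beta)!\le 2^{|\alpha+\beta|}\alpha!\,\beta!$, and finally trade $d!=d(d-1)(d-2)!$ against an enlarged exponential base. The only cosmetic difference is that you first upgrade $\mathbf{(H_3)}$ to an $H^{s_1}$ bound on each $\partial^\alpha a_{kj}$ before expanding, whereas the paper expands first and then sums $L^2$ norms of derivatives up to order $s_1$; the bookkeeping is identical.
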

\begin{proof}
For $p \in \left\{ 1, \dots, m_0 \right\}$, we have
\begin{equation}\label{H-p}
 H_{\delta, p} = t^\delta \Big( \sum_{q = 0}^r \frac{\Gamma(\delta + 1)}{\Gamma(\delta + 1 + q)} t^q X_{p, q} \Big) = t^\delta\Big( \sum^n_{i = 1} c_{pi}(t)\partial_{x_i}\Big),
\end{equation}
where $\{c_{pi}(t)\}$ are polynomials of $t$ with real constant coefficients, and we define
$$
M = \max\{|c_{pi}(t)| \big| \ 1\le p\le m_0, 1\le i\le n, 0 \le t \le T \} <\infty.
$$
We now consider the proof for the first term of \eqref{happy+}. For any $d \in \mathbb{N}$, we compute as follows 
\begin{equation*}
\begin{aligned}
\left\| H_{\delta, p}^d a_{kj}(t) \right\|_{H^{s_1}(\mathbb{R}^n)} &\leq C_6 \sum_{|\alpha| \leq s_1} \Big\| \partial^\alpha \Big( t^\delta\Big( \sum^n_{i=1}c_{pi}(t)\partial_{x_i}\Big) \Big)^d a_{kj}(t) \Big\|_{L^2(\mathbb{R}^n)} \\
&\leq C_6 (T^{\delta}M)^d \sum_{|\alpha| \leq s_1} \sum_{|\beta| = d} \frac{d!}{\beta!} \left\| \partial^{\alpha + \beta} a_{kj}(t) \right\|_{L^2(\mathbb{R}^n)}.
\end{aligned}
\end{equation*}
Since \eqref{1-2} and the following inequality, 
$$
(\alpha + \beta)! \leq 2^{|\alpha + \beta|} \alpha! \beta!, 
$$
we have 
\begin{align*}
&\left\| H_{\delta, p}^d a_{kj}(t) \right\|_{H^{s_1}(\mathbb{R}^n)} \leq C_6 (T^\delta M)^d \sum_{|\alpha| \leq s_1} \sum_{|\beta| = d} \frac{d!}{\beta!} (4B^2 + 1)^{s_1 + d + 1} \alpha! \beta! \\
&\qquad \leq C_6 (T^\delta M)^d \sum_{|\alpha| \leq s_1} \sum_{|\beta| = d} (d - 2)! d^2 (4B^2 + 1)^{s_1 + d + 1} (s_1!)^n \\
&\qquad \leq C_6 (T^\delta M)^d (n^{s_1 + 1} n^d 8^d) (s_1!)^n \big((4B^2 + 1)^{s_1 + 1}\big)^{d + 1} (d - 2)!.
\end{align*}
The proof for the second term of \eqref{happy+} is the same. 
\end{proof}

The same computation also gives the following estimate: for any $p, k, j \in \{ 1, \cdots, m_0 \}$ and $d \in \mathbb{N}$, there exists
\begin{equation}\label{happy}
\left\| H^d_{\delta, p} \partial_{x_k} a_{kj} (t) \right\|_{H^{s_1} \left( \mathbb{R}^n \right)} \leq \bar{B}^{d + 1} (d - 2) !, \quad \forall \ 0 \leq t \leq T, 
\end{equation}
here $\bar{B}$ is the real number as stated in Lemma \ref{lemma2+}.

We remark that the estimates \eqref{happy+} and \eqref{happy} are both used to obtain the following two estimates of the second-order differential operators in the Sobolev space $H^s$ with $s \in \mathbb{R}$.
\begin{lemma}\label{lemma3+?}
Let $T > 0$ and $s \in \mathbb{R}$. Assume that $(a_{kj})$ and $(b_\ell)$ satisfy the assumptions ${\bf (H_2)}$ and ${\bf (H_3)}$. Then there exists a constant $\tilde{B} > 0$ such that, for any $u \in C^\infty([0, T], \mathscr{S}(\mathbb{R}^n))$, $d \in \mathbb{N}$ and $p, k, j \in \left\{ 1, \dots, m_0 \right\}$, 
\begin{equation*}
\begin{split}
&\sum_{k, j = 1}^{m_0} \left|\left( H_{\delta, p}^d \left( \left( \partial_{x_k} a_{kj} \right) \left( \partial_{x_j} u \right) \right), H_{\delta, p}^d u \right)_{H^s}\right|
+ \left|\left( H_{\delta, p}^d (Y u ), H_{\delta, p}^d u \right)_{H^s}\right| \\
& \leq \sum_{\ell = 0}^d \binom{d}{\ell} \tilde{B}^{\ell + 1} (\ell - 2)!
\Big(\sum^{m_0}_{j=1}\left\| H_{\delta, p}^{d - \ell}  \partial_{x_j} u(t) \right\|_{H^s}
+\left\| H_{\delta, p}^{d - \ell} u(t) \right\|_{H^s}\Big)
\left\|H_{\delta, p}^d u(t) \right\|_{H^s}.
\end{split}
\end{equation*}
\end{lemma}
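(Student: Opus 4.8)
The plan is to expand the differential operator $H_{\delta,p}^d$ applied to a product using the Leibniz rule, and then control each resulting term by combining the coefficient estimates of Lemma \ref{lemma2+} (together with the estimate \eqref{happy}) with the multiplication inequality \eqref{commu-2} of Lemma \ref{lemma-0}. Since $H_{\delta,p}$ is a first-order operator of the form $t^\delta\sum_i c_{pi}(t)\partial_{x_i}$, for each product $fg$ it acts as a derivation, so that $H_{\delta,p}^d(fg)$ decomposes into a sum over $\ell$ of binomial terms $\binom{d}{\ell}\bigl(H_{\delta,p}^\ell f\bigr)\bigl(H_{\delta,p}^{d-\ell}g\bigr)$. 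This is the structural identity that produces the binomial sum appearing on the right-hand side of the claimed inequality.

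First I would treat the term $\bigl(H_{\delta,p}^d((\partial_{x_k}a_{kj})(\partial_{x_j}u)), H_{\delta,p}^d u\bigr)_{H^s}$. Applying the Leibniz decomposition gives
\begin{equation*}
H_{\delta,p}^d\bigl((\partial_{x_k}a_{kj})(\partial_{x_j}u)\bigr)
=\sum_{\ell=0}^d\binom{d}{\ell}\bigl(H_{\delta,p}^\ell\partial_{x_k}a_{kj}\bigr)\bigl(H_{\delta,p}^{d-\ell}\partial_{x_j}u\bigr),
\end{equation*}
where I use that $H_{\delta,p}$ commutes with $\partial_{x_j}$ (its coefficients depend only on $t$, not on $x$), so $H_{\delta,p}^{d-\ell}\partial_{x_j}u=\partial_{x_j}H_{\delta,p}^{d-\ell}u$ up to the ordering that matches the right-hand side. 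Then I apply the multiplication estimate \eqref{commu-2}, controlling the $H^s$ norm of each product by the $H^{s_1}$ norm of the coefficient factor times the $H^s$ norm of the $u$-factor; the coefficient factor $\|H_{\delta,p}^\ell\partial_{x_k}a_{kj}\|_{H^{s_1}}$ is bounded by $\bar B^{\ell+1}(\ell-2)!$ using \eqref{happy}. Finally, Cauchy--Schwarz in the $H^s$ inner product against $H_{\delta,p}^d u$ produces the factor $\|H_{\delta,p}^d u\|_{H^s}$.

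For the second term involving $Y=\sum_\ell b_\ell\partial_{x_\ell}+b_0$, the argument is identical: each piece $b_\ell\partial_{x_\ell}u$ and $b_0 u$ is a product to which I apply the same Leibniz expansion and the estimate \eqref{happy+} for $\|H_{\delta,p}^\ell b_\ell\|_{H^{s_1}}$. The zeroth-order term $b_0 u$ contributes the $\|H_{\delta,p}^{d-\ell}u\|_{H^s}$ summand on the right, while the first-order terms contribute the $\sum_j\|H_{\delta,p}^{d-\ell}\partial_{x_j}u\|_{H^s}$ summand, which is exactly why both types of norms appear inside the parentheses. I would absorb all the dimension-dependent constants ($m_0$, the number of terms, and the constant $C_1$ from Lemma \ref{lemma-0}) into a single enlarged constant $\tilde B\ge\bar B C_1$, chosen large enough to dominate $\bar B^{\ell+1}(\ell-2)!$ after multiplication by these fixed factors.

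The main obstacle is bookkeeping rather than conceptual: I must verify that the commutation $H_{\delta,p}^{d-\ell}\partial_{x_j}=\partial_{x_j}H_{\delta,p}^{d-\ell}$ is legitimate (which follows because the coefficients of $H_{\delta,p}$ are $x$-independent), and I must ensure the factorial growth $(\ell-2)!$ is preserved through the multiplication inequality without losing the precise $\binom{d}{\ell}$ combinatorial structure. Care is also needed with the convention $(-k)!=1$ for $k\in\mathbb N$, stated just before Lemma \ref{lemma2+}, which is what makes the factors $(\ell-2)!$ meaningful at $\ell=0,1$. Provided these are handled, collecting the terms over $\ell$ yields precisely the stated binomial sum, and the constant $\tilde B$ can be fixed once and for all independently of $d$ and $u$.
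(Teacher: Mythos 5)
Your proposal is correct and follows essentially the same route as the paper: the Leibniz expansion of $H_{\delta,p}^d$ acting on the products, the multiplication estimate \eqref{commu-2} combined with the coefficient bounds \eqref{happy} and \eqref{happy+}, and the Cauchy--Schwarz inequality against $H_{\delta,p}^d u$, with the constants absorbed into $\tilde B$. The points you flag as requiring care (commutation of $H_{\delta,p}$ with $\partial_{x_j}$ and the convention $(-k)!=1$) are handled exactly as you describe.
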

\begin{proof} 
We first claim the following Leibniz formula
$$
H_{\delta, p}^d \left( \left( \partial_{x_k} a_{kj} \right) \left( \partial_{x_j} u \right) \right) = \sum_{\ell = 0}^d \binom{d}{\ell} \left( H_{\delta, p}^\ell \partial_{x_k} a_{kj} \right) \left( H_{\delta, p}^{d - \ell} \partial_{x_j} u \right). 
$$
Then by using the Cauchy-Schwarz inequality and \eqref{commu-2}, \eqref{happy}, we compute as follows 
\begin{align*}
&\quad \sum_{k, j = 1}^{m_0} \big| \left( H_{\delta, p}^d \left( \left( \partial_{x_k} a_{kj} \right) \left( \partial_{x_j} u \right) \right), H_{\delta, p}^d u \right)_{H^s} \big| \\
&\leq \sum_{k, j = 1}^{m_0} \sum_{\ell = 0}^d \binom{d}{\ell} \left\| \big( H_{\delta, p}^\ell \partial_{x_k} a_{kj} \big) \big( H_{\delta, p}^{d - \ell}\partial_{x_j} u \big) \right\|_{H^s} \left\| H_{\delta, p}^d u \right\|_{H^s} \\
&\leq C_1 \sum_{k, j = 1}^{m_0} \sum_{\ell = 0}^d \binom{d}{\ell} \left\| H_{\delta, p}^\ell \partial_{x_k} a_{kj} \right\|_{H^{s_1}} \left\| H_{\delta, p}^{d - \ell} \partial_{x_j} u \right\|_{H^s} \left\| H_{\delta, p}^d u \right\|_{H^s} \\
&\leq \sum_{\ell = 0}^d \binom{d}{\ell} (m_0 C_1 \bar{B})^{\ell + 1} (\ell - 2)!  \sum_{j = 1}^{m_0} \left\| H_{\delta, p}^{d - \ell} \partial_{x_j} u \right\|_{H^s} \left\| H_{\delta, p}^d u \right\|_{H^s}.
\end{align*}
The same computation applies to the estimate of $\big|\big( H_{\delta, p}^d (Y u ), H_{\delta, p}^d u \big)_{H^s}\big|$.  
\end{proof}

The other estimate of second-order differential operator is stated as follows.
\begin{lemma}\label{lemma3}
Let $T > 0$ and $s \in \mathbb{R}$. Assume that $(a_{kj})$ satisfies the assumptions ${\bf (H_2)}$ and ${\bf (H_3)}$. Then there exists a constant $\tilde{B} > 0$ such that, for any $u\in C^\infty([0, T], \mathscr{S}(\mathbb{R}^n))$, $d \in \mathbb{N}$ and $p, k, j \in \left\{ 1, \dots, m_0 \right\}$, 
\begin{equation}\label{2-10}
\begin{split}
	&-\sum_{k, j = 1}^{m_0} \left( \partial_{x_k} H_{\delta, p}^d \left( a_{kj} \partial_{x_j} u \right), H_{\delta, p}^d u \right)_{H^s} \ge \Lambda^{-1} \sum_{k = 1}^{m_0} \left\| \partial_{x_k} H_{\delta, p}^d u(t) \right\|^2_{H^s}  \\
	&\qquad - \sum_{\ell = 1}^d \binom{d}{\ell} \tilde{B}^{\ell + 1} (\ell - 2)! \sum_{j = 1}^{m_0} \left\| H_{\delta, p}^{d - \ell} \partial_{x_j} u(t) \right\|_{H^s}  \sum_{k = 1}^{m_0} \left\| \partial_{x_k} H_{\delta, p}^d u(t) \right\|_{H^s} \\
	&\qquad - \tilde{B} \Big( \sum_{k = 1}^{m_0} \left\| \partial_{x_k} H_{\delta, p}^{d} u(t) \right\|_{H^s} \Big) \left\| H_{\delta, p}^d u(t) \right\|_{H^s}.
\end{split}
\end{equation}
\end{lemma}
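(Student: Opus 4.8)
The plan is to move the outer derivative onto the second factor by integration by parts, expand $H_{\delta,p}^d(a_{kj}\partial_{x_j}u)$ with the Leibniz rule, and then separate the leading term (corresponding to $\ell=0$), where the ellipticity ${\bf (H_2)}$ produces the positive contribution $\Lambda^{-1}\sum_k\|\partial_{x_k}H_{\delta,p}^d u\|^2_{H^s}$, from the remaining terms, which are relegated to the lower-order error terms in \eqref{2-10}. The structural fact I would exploit throughout is that, by \eqref{H-p}, $H_{\delta,p}=t^\delta\sum_{i=1}^n c_{pi}(t)\partial_{x_i}$ has coefficients independent of $x$, so $H_{\delta,p}^d$ commutes with every $\partial_{x_k}$ and with the Fourier multiplier $\Lambda_x^s$. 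Since $\partial_{x_k}$ commutes with $\Lambda_x^s$ and is skew-adjoint in $L^2$, it is skew-adjoint in the $H^s$ inner product as well, whence
$$
-\sum_{k,j=1}^{m_0}\left(\partial_{x_k}H_{\delta,p}^d(a_{kj}\partial_{x_j}u),\,H_{\delta,p}^d u\right)_{H^s}
=\sum_{k,j=1}^{m_0}\left(H_{\delta,p}^d(a_{kj}\partial_{x_j}u),\,\partial_{x_k}H_{\delta,p}^d u\right)_{H^s}.
$$
Because $H_{\delta,p}$ is a derivation in $x$, the higher-order Leibniz rule gives $H_{\delta,p}^d(a_{kj}\partial_{x_j}u)=\sum_{\ell=0}^d\binom{d}{\ell}(H_{\delta,p}^\ell a_{kj})(H_{\delta,p}^{d-\ell}\partial_{x_j}u)$, and using $[\partial_{x_j},H_{\delta,p}^{d}]=0$ the $\ell=0$ summand is $a_{kj}\,\partial_{x_j}H_{\delta,p}^d u$.

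For the leading $\ell=0$ term I would set $v=H_{\delta,p}^d u$ and write $\left(a_{kj}\partial_{x_j}v,\partial_{x_k}v\right)_{H^s}=\left(\Lambda_x^s(a_{kj}\partial_{x_j}v),\partial_{x_k}\Lambda_x^s v\right)_{L^2}$. Commuting $\Lambda_x^s$ past the coefficient $a_{kj}$ and using $\Lambda_x^s\partial_{x_j}=\partial_{x_j}\Lambda_x^s$, the clean piece $\sum_{k,j}\left(a_{kj}\,\partial_{x_j}\Lambda_x^s v,\,\partial_{x_k}\Lambda_x^s v\right)_{L^2}$ is bounded below, by ${\bf (H_2)}$ applied to the genuinely positive quadratic form, by $\Lambda^{-1}\sum_{k=1}^{m_0}\|\partial_{x_k}\Lambda_x^s v\|_{L^2}^2=\Lambda^{-1}\sum_{k=1}^{m_0}\|\partial_{x_k}v\|_{H^s}^2$. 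The residual commutator piece $\sum_{k,j}\left([\Lambda_x^s,a_{kj}]\partial_{x_j}v,\,\partial_{x_k}\Lambda_x^s v\right)_{L^2}$ is controlled with \eqref{commu-1}, since $\|[\Lambda_x^s,a_{kj}]\partial_{x_j}v\|_{L^2}\le C_1\|a_{kj}\|_{H^{s_0}}\|\partial_{x_j}v\|_{H^{s-1}}\le C_1\|a_{kj}\|_{H^{s_0}}\|v\|_{H^s}$, with $\|a_{kj}\|_{H^{s_0}}$ bounded uniformly in $t$ by ${\bf (H_3)}$; after summation this yields exactly the last error term $\tilde B\big(\sum_k\|\partial_{x_k}H_{\delta,p}^d u\|_{H^s}\big)\|H_{\delta,p}^d u\|_{H^s}$.

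For the terms with $\ell\ge1$ I would apply Cauchy–Schwarz in $H^s$, then the multiplication estimate \eqref{commu-2} to obtain $\|(H_{\delta,p}^\ell a_{kj})(H_{\delta,p}^{d-\ell}\partial_{x_j}u)\|_{H^s}\le C_1\|H_{\delta,p}^\ell a_{kj}\|_{H^{s_1}}\|H_{\delta,p}^{d-\ell}\partial_{x_j}u\|_{H^s}$, and finally the coefficient bound \eqref{happy+} to replace $\|H_{\delta,p}^\ell a_{kj}\|_{H^{s_1}}$ by $\bar B^{\ell+1}(\ell-2)!$. Summing over $k,j$ turns the double sum into the product $\big(\sum_j\|H_{\delta,p}^{d-\ell}\partial_{x_j}u\|_{H^s}\big)\big(\sum_k\|\partial_{x_k}H_{\delta,p}^d u\|_{H^s}\big)$, and absorbing the fixed constants $m_0$ and $C_1$ into a single $\tilde B\ge C_1\bar B$ produces the middle sum of \eqref{2-10}. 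Collecting the positive leading bound together with the two error contributions gives the asserted inequality.

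The main obstacle is the leading $\ell=0$ term: one must interchange $\Lambda_x^s$ with the variable coefficient $a_{kj}$ so that the ellipticity in ${\bf (H_2)}$ can be invoked on the honestly positive form $\sum_{k,j}a_{kj}\,\partial_{x_j}\Lambda_x^s v\,\partial_{x_k}\Lambda_x^s v$, and then verify that the price of this interchange, the commutator $[\Lambda_x^s,a_{kj}]$, is strictly of lower order — precisely the content of \eqref{commu-1} — so that it can be charged to the error term rather than competing with the positive principal part that must survive on the right-hand side of \eqref{2-10}.
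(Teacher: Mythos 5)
Your proposal is correct and follows essentially the same route as the paper: integration by parts to move $\partial_{x_k}$ onto $H_{\delta,p}^d u$, the Leibniz expansion separating the $\ell=0$ term from the rest, the further splitting of the leading term into the positive form handled by ${\bf (H_2)}$ plus the commutator $[\Lambda_x^s, a_{kj}]$ controlled by \eqref{commu-1}, and the treatment of the $\ell\ge 1$ terms via \eqref{commu-2} and \eqref{happy+}. The only difference is cosmetic (you make explicit the $x$-independence of the coefficients of $H_{\delta,p}$ and correctly record the square on the positive term, which the paper's intermediate display omits as an apparent typo).
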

\begin{proof}
By using the Leibniz formula, we have
\begin{align*}
 -\sum_{k, j = 1}^{m_0} \big( \partial_{x_k} H^d_{\delta, p}& \left( a_{kj} \partial_{x_j} u \right),  H^d_{\delta, p} u \big)_{H^s} 
  = \sum_{k, j = 1}^{m_0} \left( H_{\delta, p}^d \left( a_{kj} \partial_{x_j} u \right), \partial_{x_k} H_{\delta, p}^d u \right)_{H^s} \\
&= \sum_{k, j = 1}^{m_0} \left( a_{kj} H_{\delta, p}^d \partial_{x_j} u, \partial_{x_k} H_{\delta, p}^d u \right)_{H^s}\\
 & + \sum_{k, j = 1}^{m_0} \sum_{\ell = 1}^d \binom{d}{\ell} \left( \left( H_{\delta, p}^\ell a_{kj} \right)  \left( H_{\delta, p}^{d - \ell} \partial_{x_j} u \right), \partial_{x_k} H_{\delta, p}^d u \right)_{H^s} \\
&= I_1 + I_2.
\end{align*}
For the term $I_1$, we compute as follows 
\begin{align*}
I_1 &= \sum_{k, j = 1}^{m_0} \left( a_{kj} \Lambda_x^s H_{\delta, p}^d \partial_{x_j} u, \Lambda_x^s \partial_{x_k} H_{\delta, p}^d u \right)_{L^2} \\
&\quad +  \sum_{k, j = 1}^{m_0} \left( \left[ \Lambda_x^s, a_{kj} \right] H_{\delta, p}^d \partial_{x_j} u, \Lambda_x^s \partial_{x_k} H_{\delta, p}^d u \right)_{L^2} \\
&= J_{1, 1} + J_{1, 2}.
\end{align*}
For $J_{1, 1}$, thanks to the assumption ${\bf (H_2)}$, we have 
$$
J_{1, 1} \ge \Lambda^{-1} \sum_{k = 1}^{m_0} \left\| H_{\delta, p}^d \partial_{x_k} u \right\|_{H^s}.
$$
For $J_{1, 2}$, by using the Cauchy-Schwarz inequality and \eqref{commu-1}, we have
$$
|J_{1, 2}| \leq \sum_{k, j = 1}^{m_0} C_1 \left\| a_{kj} \right\|_{H^{s_0}} \left\| H_{\delta, p}^d \partial_{x_j} u \right\|_{H^{s - 1}} \left\| \partial_{x_k} H_{\delta, p}^d u \right\|_{H^s}.
$$
Noting the assumption ${\bf{(H_3)}}$, we compute as follows, 
\begin{align*}
\left\| a_{kj} \right\|_{H^{s_0}} \leq C_6 \sum_{|\alpha| \leq s_0} \left\| \partial^\alpha a_{kj} \right\|_{L^2} \leq C_6 \sum_{|\alpha| \leq s_0} (B + 1)^{s_0 + 1} (s!)^n.
\end{align*}
Now take $C_7 \ge C_1 C_6 \big( n (B + 1) \big)^{s_0 + 1} (s!)^n$, we then have 
\begin{align*}
|J_{1, 2}| &\leq \sum_{k, j = 1}^{m_0} C_7 \left\| H_{\delta, p}^d \partial_{x_j} u \right\|_{H^{s - 1}} \left\| \partial_{x_k} H_{\delta, p}^d u \right\|_{H^s} \\
&\leq C_7 m_0 \left\| H_{\delta, p}^d u \right\|_{H^s} \sum_{k = 1}^{m_0} \left\| \partial_{x_k} H_{\delta, p}^d u \right\|_{H^s}.
\end{align*}
For the term $I_2$, noting the estimate \eqref{happy+} and using \eqref{commu-2}, we compute as follows, 
\begin{align*}
|I_2| &\leq \sum_{k , j = 1}^{m_0} \sum_{\ell = 1}^d \binom{d}{\ell} \left\| \left( H_{\delta, p}^\ell a_{kj} \right)  \left( H_{\delta, p}^{d - \ell} \partial_{x_j} u \right) \right\|_{H^s} \left\| \partial_{x_k} H_{\delta, p}^d u \right\|_{H^s} \\
&\leq C_1 \sum_{k , j = 1}^{m_0} \sum_{\ell = 1}^d \binom{d}{\ell} \left\| H_{\delta, p}^\ell a_{kj} \right\|_{H^{s_1}} \left\| H_{\delta, p}^{d - \ell} \partial_{x_j} u \right\|_{H^s} \left\| \partial_{x_k} H_{\delta, p}^d u \right\|_{H^s} \\
&\leq \sum_{\ell = 1}^d \binom{d}{\ell} (C_1 \bar{B})^{\ell + 1} (\ell - 2)! \sum_{j = 1}^{m_0} \left\| H_{\delta, p}^{d - \ell} \partial_{x_j} u \right\|_{H^s} \sum_{k = 1}^{m_0} \left\| \partial_{x_k} H_{\delta, p}^d u \right\|_{H^s}.
\end{align*}
Combining all the estimates of $J_{1, 1}, J_{1, 2}$ and $I_2$, we then obtain the result \eqref{2-10}.
\end{proof}

For $g \in C^\infty([0, T]\times\mathbb{R}^n)$ satisfying the assumption ${\bf (H_3)}$, we set 
\begin{equation}\label{def-10}
g_\epsilon(t, x)=g(t, x) \, e^{-\epsilon |x|^{2}},\ \  0<\epsilon<1 ,
\end{equation}
then $g_\epsilon \in C^\infty([0, T], \mathscr{S}(\mathbb{R}^n))$ and we have,

\begin{lemma}\label{lemma4+}
Let $T > 0$ and $s \in \mathbb{R}$. Assume that $g_\epsilon$ is defined as \eqref{def-10} with $g$ satisfying the assumption ${\bf (H_3)}$. Then there exists a constant $\tilde{C} > 0$ such that, for any $d \in \mathbb{N}$ and $p \in \left\{ 1, \dots, m_0 \right\}$,
$$
\left\| H_{\delta, p}^d g_\epsilon(t) \right\|_{H^s(\mathbb{R}^n)} \leq  \tilde{C}^{d + 1} d!, \quad \forall \ 0 \leq t \leq T\ ,
$$
where $\tilde{C} > 0$ depends on $T$, but is independent of $0<\epsilon<1$. 
\end{lemma}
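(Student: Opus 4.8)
The plan is to reduce the $H^s$-estimate, through the explicit form of $H_{\delta,p}$, to $L^2$-estimates for the derivatives of $g_\epsilon = g\,e^{-\epsilon|x|^2}$, and then to control those by combining the analytic bound $\mathbf{(H_3)}$ on $g$ with \emph{uniform-in-$\epsilon$} sup-norm bounds for the derivatives of the Gaussian. First, by \eqref{H-p} the operator $\sum_{i} c_{pi}(t)\partial_{x_i}$ has coefficients independent of $x$, so the multinomial theorem gives
\[
H_{\delta,p}^d = t^{\delta d}\sum_{|\beta|=d}\frac{d!}{\beta!}\Big(\prod_{i=1}^n c_{pi}(t)^{\beta_i}\Big)\partial_x^\beta .
\]
Using $\big|\prod_i c_{pi}(t)^{\beta_i}\big|\le M^{d}$ and $t^{\delta d}\le T^{\delta d}$, it then suffices to prove $\sum_{|\beta|=d}\frac1{\beta!}\|\partial_x^\beta g_\epsilon(t)\|_{H^s}\le \tilde C_0^{\,d+1}$ with $\tilde C_0$ independent of $\epsilon$; the factorial $d!$ in the statement is produced precisely by the $d!$ appearing above.

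Next I would remove the real index $s$ by the embedding $\|v\|_{H^s}\le\|v\|_{H^{m_s}}$ with $m_s=\lceil\max(s,0)\rceil\in\mathbb N$, valid because $m_s\ge s$ and $m_s\ge0$. For the integer order $m_s$ one has $\|\partial^\beta g_\epsilon\|_{H^{m_s}}\le C_{m_s}\sum_{|\alpha|\le m_s}\|\partial^{\alpha+\beta}g_\epsilon\|_{L^2}$, so that \emph{only} $L^2$-bounds on derivatives of $g_\epsilon$ are needed; this is what lets us bypass any multiplication estimate in $H^s$ for real $s$ (which would fail here since the $L^2$-based Sobolev norms of $e^{-\epsilon|x|^2}$ themselves blow up as $\epsilon\to0$). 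By the Leibniz rule and Cauchy--Schwarz,
\[
\|\partial^\nu g_\epsilon\|_{L^2}\le\sum_{\gamma\le\nu}\binom{\nu}{\gamma}\|\partial^\gamma g\|_{L^2}\,\big\|\partial^{\nu-\gamma}e^{-\epsilon|x|^2}\big\|_{L^\infty}.
\]

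The heart of the argument, and the main obstacle, is bounding $\|\partial^\mu e^{-\epsilon|x|^2}\|_{L^\infty}$ uniformly in $0<\epsilon<1$. Factoring $\partial^\mu e^{-\epsilon|x|^2}=\prod_j\partial_{x_j}^{\mu_j}e^{-\epsilon x_j^2}$ and estimating each one-dimensional factor through its Fourier transform (equivalently, via the Hermite representation $\partial_x^k e^{-\epsilon x^2}=\epsilon^{k/2}(-1)^k H_k(\sqrt\epsilon\,x)e^{-\epsilon x^2}$ together with Cram\'er's inequality), one obtains $\|\partial_x^k e^{-\epsilon x^2}\|_{L^\infty}\le A^{k}\sqrt{k!}$: the gain $\epsilon^{k/2}\le1$ is exactly what absorbs the polynomial growth and makes the constant independent of $\epsilon$. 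Hence $\|\partial^\mu e^{-\epsilon|x|^2}\|_{L^\infty}\le A^{|\mu|}\sqrt{\mu!}$ for all $\mu$ and all $\epsilon\in(0,1)$.

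Finally I would assemble the estimate. Inserting $\mathbf{(H_3)}$, namely $\|\partial^\gamma g\|_{L^2}\le C^{|\gamma|+1}\gamma!$, and the Gaussian bound into the Leibniz sum, and using $\binom{\nu}{\gamma}\gamma!\sqrt{(\nu-\gamma)!}=\nu!/\sqrt{(\nu-\gamma)!}\le\nu!$, $\#\{\gamma\le\nu\}\le 2^{|\nu|}$ and $(\alpha+\beta)!\le 2^{|\alpha+\beta|}\alpha!\beta!$, one gets $\|\partial^\nu g_\epsilon\|_{L^2}\le E^{|\nu|+1}\nu!$ and therefore $\|\partial^\beta g_\epsilon\|_{H^s}\le F_s(2E)^{|\beta|}\beta!$, all constants being independent of $\epsilon$. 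Summing over the $\binom{d+n-1}{n-1}\le C_n 2^{d}$ multi-indices with $|\beta|=d$ yields $\sum_{|\beta|=d}\frac1{\beta!}\|\partial^\beta g_\epsilon\|_{H^s}\le \tilde C_0^{\,d+1}$, and combined with the first step the desired bound $\tilde C^{d+1}d!$ follows, with $\tilde C$ depending only on $T,s,n,B,C$ and not on $\epsilon$, as asserted.
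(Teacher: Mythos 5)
Your proposal is correct and follows essentially the same route as the paper's proof: expand $H_{\delta,p}^d$ via \eqref{H-p} and the multinomial theorem, reduce the $H^s$-norm to integer-order $L^2$-norms, apply the Leibniz rule to $g\,e^{-\epsilon|x|^2}$, and combine $\mathbf{(H_3)}$ with a uniform-in-$\epsilon$ sup-norm bound on $\partial^\gamma e^{-\epsilon|x|^2}$. The only difference is that you actually justify that Gaussian bound (via the Hermite representation and Cram\'er's inequality, even obtaining the sharper $\sqrt{\gamma!}$ growth), whereas the paper merely states it as a claim of the form $\bar{C}^{|\gamma|+1}\gamma!$.
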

\begin{proof}
First we claim that there exists a constant $\bar{C} > 0$ {\em independent} of $\epsilon$ such that
$$
\left| \partial^\beta_x e^{-\epsilon |x|^2} \right| \leq \bar{C}^{|\beta| + 1} \beta!, \ \forall \beta \in \mathbb{N}^n,\ \forall\ x\in\mathbb{R}^n.
$$
Then for any $p \in \left\{ 1, \dots, m_0 \right\}$, noting \eqref{H-p}, we compute as follows, 
\begin{align*}
&\left\| H_{\delta, p}^d g_\epsilon(t) \right\|_{H^s} \leq C_8 \sum_{|\alpha| \leq [|s| ]} \Big\| \partial^\alpha \Big( t^\delta\Big( \sum^n_{j=1}c_{pj}(t)\partial_{x_j}\Big) \Big)^d (g \cdot e^{-\epsilon |x|^2}) \Big\|_{L^2} \\
&\qquad \leq C_8 (T^\delta M)^d \sum_{|\alpha| \leq [ |s| ]} \sum_{|\beta| = d} \frac{d!}{\beta!} \sum_{\gamma \leq \alpha + \beta} \binom{\alpha + \beta}{\gamma} \left\| \partial^{\alpha + \beta - \gamma} g \right\|_{L^2} \left\| \partial^{\gamma} e^{-\epsilon | x |^2} \right\|_{L^\infty} \\
&\qquad \leq C_8 (T^\delta M)^d d! \sum_{|\alpha| \leq [ |s| ]} \sum_{|\beta| = d} \sum_{\gamma \leq \alpha + \beta} 2^{|\alpha + \beta|} \alpha! \cdot C^{|\alpha + \beta - \gamma| + 1} \bar{C}^{|\gamma| + 1} \\
&\qquad \leq C_8 (T^\delta M [ |s|]^n)^{d} d! \sum_{|\alpha| \leq [|s| ]} \sum_{|\beta| = d} \sum_{\gamma \leq \alpha + \beta} \Big( \big(2(C + \bar{C})\big)^{[|s|] + 2} \Big)^{d + 1} \leq \tilde{C}^{d + 1} d!, 
\end{align*}
here $[\cdot ]$ stands for the symbol of rounding up.
\end{proof}

Another estimate is also needed in Section \ref{section5}.
\begin{lemma}\label{estimate up}
For any $f \in L^\infty([0, T], \mathscr{S}(\mathbb{R}^n))$, $d \in \mathbb{N}^+$ and $p \in \{ 1, \dots, m_0 \}$, we have 
$$
\left\| H_{\delta, p}^d f(t) \right\|_{H^s}^2 \bigg|_{t = 0} = 0.
$$
\end{lemma}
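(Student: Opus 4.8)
The plan is to exploit the explicit factorization of $H_{\delta,p}$ recorded in \eqref{H-p}, which isolates a scalar prefactor that vanishes at $t=0$. Writing $L(t)=\sum_{i=1}^{n}c_{pi}(t)\partial_{x_i}$, where the $c_{pi}(t)$ are polynomials in $t$ with real constant coefficients, \eqref{H-p} reads $H_{\delta,p}=t^{\delta}L(t)$. Since $L(t)$ contains only the spatial derivatives $\partial_{x_i}$ while $t^{\delta}$ is a function of $t$ alone, multiplication by $t^{\delta}$ commutes with $L(t)$ as operators acting on functions of $(t,x)$, because $\partial_{x_i}(t^{\delta}g)=t^{\delta}\partial_{x_i}g$.

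First I would establish, by induction on $d$, the factorization
\begin{equation*}
H_{\delta,p}^{d}=t^{d\delta}\,L(t)^{d},\qquad d\in\mathbb{N}^{+}.
\end{equation*}
For $d=1$ this is \eqref{H-p}. For the inductive step one writes $H_{\delta,p}^{d+1}=t^{\delta}L(t)\,\bigl(t^{d\delta}L(t)^{d}\bigr)$ and pushes the factor $t^{d\delta}$ past $L(t)$ using the commutation just noted, collecting $t^{\delta}\cdot t^{d\delta}=t^{(d+1)\delta}$. Because each $c_{pi}$ is independent of $x$, the operator $L(t)^{d}$ is an order-$d$ differential operator in $x$ of the form $\sum_{|\beta|=d}c^{(d)}_{\beta}(t)\,\partial_x^{\beta}$, whose coefficients $c^{(d)}_{\beta}(t)$ are again polynomials in $t$.

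Next I would control the surviving factor $L(t)^{d}f(t)$ uniformly near $t=0$. Set $M_{0}=\max\{|c_{pi}(t)|:1\le i\le n,\ 0\le t\le T\}<\infty$, which is finite since the $c_{pi}$ are polynomials on the compact interval $[0,T]$. As $f\in L^{\infty}([0,T],\mathscr{S}(\mathbb{R}^{n}))$ and $\mathscr{S}(\mathbb{R}^{n})\hookrightarrow H^{s}(\mathbb{R}^{n})$ for every $s\in\mathbb{R}$, each quantity $\sup_{0\le t\le T}\|\partial_x^{\beta}f(t)\|_{H^{s}}$ with $|\beta|=d$ is finite. Hence there is a constant $K$, depending on $d,\ M_{0}$ and $f$ but not on $t$, with
\begin{equation*}
\bigl\|L(t)^{d}f(t)\bigr\|_{H^{s}}\le K,\qquad 0\le t\le T.
\end{equation*}

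Combining the two displays yields $\|H_{\delta,p}^{d}f(t)\|_{H^{s}}\le K\,t^{d\delta}$ for $0\le t\le T$. Since $d\ge 1$ and $\delta>1>0$, the exponent $d\delta$ is strictly positive, so $t^{d\delta}\to 0$ as $t\to 0^{+}$; thus the $H^s$-norm, hence its square, vanishes at $t=0$. (As a consistency check, one may note directly that at $t=0$ the operator $H_{\delta,p}$ is itself the zero operator, since every coefficient satisfies $I^{q}(h_\delta)(0)=\frac{\Gamma(\delta+1)}{\Gamma(\delta+1+q)}\,0^{\delta+q}=0$ for $\delta+q>0$, so $H_{\delta,p}^{d}f(t)\big|_{t=0}=0$ in $\mathscr{S}$.) This gives $\|H_{\delta,p}^{d}f(t)\|_{H^{s}}^{2}\big|_{t=0}=0$, as claimed. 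There is no serious obstacle here: the only points requiring care are the commutation argument underlying the factorization and the use of the embedding $\mathscr{S}\hookrightarrow H^{s}$ to guarantee that the spatial factor stays bounded as $t\to 0$, after which the positive power $t^{d\delta}$ forces the vanishing.
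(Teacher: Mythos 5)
Your proposal is correct and follows essentially the same route as the paper: both factor $H_{\delta,p}^d$ as $t^{\delta d}$ times a $t$-dependent spatial differential operator of order $d$ via \eqref{H-p}, bound the spatial factor uniformly on $[0,T]$ using $f\in L^\infty([0,T],\mathscr{S}(\mathbb{R}^n))$, and conclude from $t^{\delta d}\to 0$ as $t\to 0^{+}$. The only cosmetic difference is that you organize the factorization as an operator identity proved by induction, whereas the paper writes out the multinomial expansion directly.
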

Here, we just need to use \eqref{H-p}, and for $0\le t\le T$,
\begin{align*}
&\left\| H_{\delta, p}^d f(t) \right\|_{H^s} = \Big\| \Big( t^\delta\Big( \sum^n_{j=1}c_{pj}(t)\partial_{x_j}\Big) \Big)^d f(t) \Big\|_{H^s} \\
&\qquad \leq t^{\delta d} M^d \sum_{d_1 + \dots + d_n = d} \frac{d!}{d_1! \dots d_n!} \left\| \partial^{d_1}_{x_1}\cdots \partial^{d_n}_{x_n}  f(t) \right\|_{H^s}.
\end{align*}
Note that $f \in L^\infty ([0, T], \mathscr{S}(\mathbb{R}^n))$, then $
\left\| \partial^{d_1}_{x_1}\cdots \partial^{d_n}_{x_n}  f(t) \right\|_{H^s}$ is bounded for $t\in [0, T]$, thus for $\delta>1$ and for any $d \in \mathbb{N}^+$, 
$$
\left\| H_{\delta, p}^d f(t) \right\|_{H^s}^2 \bigg|_{t = 0} =\lim_{t \to 0+}\left\| H_{\delta, p}^d f(t) \right\|_{H^s}^2 = 0.
$$

\section{Estimation of directional derivations}\label{section5}
In this section, we will give \`a priori estimate of $H_{\delta, p}^d u(t)$ for positive time and $d\in\mathbb{N}$.
\begin{thm}\label{thm3}
Let $T > 0, s\in\mathbb{R}, u_0\in H^s(\mathbb{R}^n)$. Under the assumptions of Theorem \ref{thm1}, assume that $u$ is the weak solution of the Cauchy problem \eqref{1-1}. Then $u(t)$ is smooth when $t >0$, and there exists a constant $A > 0$ such that, for any $d \in \mathbb{N}$, $p \in \left\{ 1, \dots, m_0 \right\}$ and $0<t \le T$, 
$$
\left\| H^d_{\delta, p} u(t) \right\|^2_{H^s} + \Lambda^{-1} \sum_{i = 1}^{m_0} \int_0^t \left\| \partial_{x_i} H^d_{\delta, p} u(\tau) \right\|_{H^s}^2 d\tau \leq (A^{d + 1} d!)^2,
$$
where A depends on $T, B, C, n, \Lambda, m_0, r$ and $\delta$ with $\delta>1$.
\end{thm}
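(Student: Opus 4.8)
The plan is to prove the estimate by induction on $d$ through a weighted energy method, carried out first for the smooth solution supplied by Theorem \ref{thm2} (taking $u_0\in\mathscr{S}(\mathbb{R}^n)$ and replacing $g$ by the mollification $g_\epsilon$ of \eqref{def-10}), and then transferring it to the weak solution via the uniqueness in Corollary \ref{weak}. The base case $d=0$ is precisely the left-hand side of the energy estimate \eqref{energy}, so it suffices to take $A$ large enough that $A^2\ge C_T(\|u_0\|^2_{H^s}+\|g\|^2_{L^1([0,T],H^s)})$. For the inductive step, set $v=H^d_{\delta,p}u$. Applying $H^d_{\delta,p}$ to the equation in \eqref{1-1}, invoking the commutation identity \eqref{2-3} of Lemma \ref{lemma1}, and using that $H_{\delta,p}$ commutes with each $\partial_{x_k}$ (its spatial part has constant coefficients), I obtain the transport--diffusion equation
\[
(\partial_t+X)v = H^d_{\delta,p}g - H^d_{\delta,p}(Yu) + H^d_{\delta,p}\sum_{k,j=1}^{m_0}a_{kj}\partial_{x_k}\partial_{x_j}u + d\delta t^{\delta-1}\partial_{x_p}H^{d-1}_{\delta,p}u,
\]
in which the last term is the \emph{only} genuine coupling to lower order produced by the degenerate drift $X$.

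Next I would pair this equation with $v$ in $H^s$. The term $(Xv,v)_{H^s}$ is bounded by $C\|v\|^2_{H^s}$ via the $H^s$-analogue of Lemma \ref{prelemma-1}; the contribution of $H^d_{\delta,p}(Yu)$, and that of the lower-order piece $\sum H^d_{\delta,p}((\partial_{x_k}a_{kj})\partial_{x_j}u)$ arising from the splitting $a_{kj}\partial_{x_k}\partial_{x_j}=\partial_{x_k}(a_{kj}\partial_{x_j}\,\cdot\,)-(\partial_{x_k}a_{kj})\partial_{x_j}$, are estimated by Lemma \ref{lemma3+?}; and the principal part $\sum(\partial_{x_k}H^d_{\delta,p}(a_{kj}\partial_{x_j}u),v)_{H^s}$ is handled by Lemma \ref{lemma3}, which furnishes the coercive gain $\Lambda^{-1}\sum_k\|\partial_{x_k}v\|^2_{H^s}$ at the price of error terms all carrying the common factor $\sum_k\|\partial_{x_k}v\|_{H^s}$.

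After multiplying by $2$ and integrating on $[0,t]$ — with $\|v(0)\|_{H^s}=0$ for $d\ge1$ by Lemma \ref{estimate up} — I would invoke Young's inequality to absorb every occurrence of $\sum_k\|\partial_{x_k}v\|_{H^s}$ into the coercive term $\Lambda^{-1}\sum_k\int_0^t\|\partial_{x_k}v\|^2_{H^s}$; since these factors are multiplied by sums $\sum_{\ell}$ of length growing with $d$, the Young weights must be chosen \emph{summable} in $\ell$ (equivalently, a Cauchy--Schwarz summation in $\ell$ with summable weights), so that the absorbed part stays within the fixed dissipation budget. The remaining factors $\|\partial_{x_j}H^{d-\ell}_{\delta,p}u\|_{H^s}$ and $\|H^{d-\ell}_{\delta,p}u\|_{H^s}$ are then controlled by the induction hypothesis at index $d-\ell$ through its dissipation integral. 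The source is bounded by Lemma \ref{lemma4+} as $\int_0^t(H^d_{\delta,p}g_\epsilon,v)_{H^s}\,d\tau\le \tfrac12 T\tilde{C}^{2d+2}(d!)^2+\tfrac12\int_0^t\|v\|^2_{H^s}\,d\tau$, and the commutator term by the Cauchy--Schwarz inequality in time, bounding $t^{\delta-1}\le T^{\delta-1}$ (this is where $\delta>1$ enters) and using $\int_0^t\|\partial_{x_p}H^{d-1}_{\delta,p}u\|^2_{H^s}\,d\tau\le \Lambda\,(A^{d}(d-1)!)^2$ from the induction hypothesis.

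The decisive combinatorial point is that the Leibniz weights obey $\binom{d}{\ell}(\ell-2)!\,(d-\ell)!=d!/(\ell(\ell-1))$ for $\ell\ge2$ and $=d!$ for $\ell=1$; substituting the induction bounds, each cross term becomes $(\text{coeff})\cdot(A^{d+1}d!)^2$ where the coefficient is a convergent series $\sum_{\ell\ge2}(\tilde{B}/A)^{2\ell}/(\ell-1)^2$ times negative powers of $A$, hence tends to $0$ as $A\to\infty$; likewise the source and commutator coefficients carry factors $(\tilde{C}/A)^{2}$ and $A^{-2}$. Collecting everything yields
\[
\|v(t)\|^2_{H^s}+\Lambda^{-1}\sum_{k=1}^{m_0}\int_0^t\|\partial_{x_k}v\|^2_{H^s}\,d\tau \le \Theta(A)\,(A^{d+1}d!)^2 + G\int_0^t\|v(\tau)\|^2_{H^s}\,d\tau,
\]
with $G$ independent of $A$ and $\Theta(A)\to0$ as $A\to\infty$. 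Gronwall's inequality then closes the induction once $A$ is fixed so large that $\Theta(A)e^{GT}(1+GT)\le1$. Finally I would pass to the limit $\epsilon\to0$ and mollify the initial datum, using weak lower semicontinuity of the norms together with the uniqueness in Corollary \ref{weak}, and note that the uniform bounds on $H^d_{\delta,p}u$ for all $d$, combined with ${\bf (H_1)}$, force $u(t)$ to be smooth for $t>0$. The main obstacle is exactly this quantitative balancing: tuning the summable Young weights so the top-order dissipation is absorbed while keeping $G$ free of $A$, and verifying that the factorial bookkeeping — in particular the commutator coupling to $E_{d-1}$ through its dissipation integral, which is the very mechanism taming the degenerate drift — reproduces the factor $d!$ rather than any faster growth.
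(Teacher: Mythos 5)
Your proposal is correct and follows essentially the same route as the paper: mollify $u_0$ and $g$, prove the bound for the smooth solution $u_\epsilon$ by induction on $d$ using the commutation identity of Lemma \ref{lemma1}, the $H^s$ pairing with Lemmas \ref{lemma3+?}, \ref{lemma3}, \ref{lemma4+} and \ref{estimate up}, absorb the dissipation by Young/Cauchy--Schwarz, close with Gronwall, and pass to the limit $\epsilon\to 0$ via compactness and the uniqueness of Corollary \ref{weak}. Your explicit tracking of the combinatorial identity $\binom{d}{\ell}(\ell-2)!\,(d-\ell)!\le d!/(\ell(\ell-1))$ and of the summable Young weights is in fact more detailed than what the paper records at the corresponding step.
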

Here the existence and uniqueness of the weak solution for the initial datum $u_0\in H^s(\mathbb{R}^n)$ is given by Corollary \ref{weak}.

\bigskip
\noindent{\bf Mollifier the initial data:} Let $u_0\in H^s(\mathbb{R}^n)$, we define
$$
u_{0, \epsilon}=(u_{0}*\varphi_{\epsilon})e^{-\epsilon |x|^{2}},\ \ \ 0<\epsilon<1,
$$
here $\varphi_{\epsilon}$ is defined as follows,
$$
\varphi_{\epsilon}(x)=\epsilon^{-n}\varphi(\epsilon^{-1} x),
$$
where $\varphi \in C^\infty_0(\mathbb{R}^n)$ satisfies $\varphi\ge 0$ and $\|\varphi\|_{L^1}=1$.
Then for any $m, \mu \in \mathbb{N}$, by using the Minkowski inequality and the Young inequality, we have
$$
\left\| u_{0, \epsilon} \right\|_{H^m_\mu}^2 \leq  C_{\epsilon, m, \mu} \left\| u_0 \right\|_{H^s}^2,
$$
which leads to $u_{0, \epsilon} \in \mathscr{S}(\mathbb{R}^n)$, we can also get
$$
\left\| u_{0, \epsilon} \right\|_{H^s} \leq \left\| u_0 \right\|_{H^s}.
$$

Now let $g_\epsilon$ be defined as in \eqref{def-10}, then by using Theorem \ref{thm2}, the following Cauchy problem
\begin{equation}\label{1-A}
\begin{cases}
\partial_t u_{\epsilon} + X u_{\epsilon}+ Y u_{\epsilon} + \displaystyle \sum_{k, j = 1}^{m_0} \left( \partial_{x_k} a_{ij} \right) \left( \partial_{x_j} u_{\epsilon} \right) - \displaystyle \sum_{k, j = 1}^{m_0} \partial_{x_k} (a_{ij} \partial_{x_j} u_{\epsilon}) = g_\epsilon\ , \\
u_{\epsilon} \big{|}_{t = 0} = u_{0, \epsilon},
\end{cases}
\end{equation}
admits a unique solution $u_{\epsilon}\in C^{\infty}([0, T]; \mathscr{S}(\mathbb{R}^n))$. From Theorem \ref{thm2}, the solution of the Cauchy problem \eqref{1-A} has the following estimate for $0 \leq t \leq T$,
\begin{equation}\label{A0}
\left\| u_\epsilon(t) \right\|^2_{H^s(\mathbb{R}^n)} + \Lambda^{-1} \sum_{k = 1}^{m_0} \int_0^t \left\| \partial_{x_k} u_\epsilon(\tau) \right\|^2_{H^s(\mathbb{R}^n)} d\tau \leq A_0^2.
\end{equation}
with
$$
A_0=\sqrt{C_T\big( \left\| u_0 \right\|^2_{H^s(\mathbb{R}^n)} +  \left\| g \right\|^2_{L^1 \left( [0, T], H^s(\mathbb{R}^n) \right)}\big)},
$$
which is independent of $\epsilon$.

The remainder of the proof of Theorem \ref{thm3} requires the following proposition.

\begin{prop}\label{prop4.1}
Let $T > 0, s\in\mathbb{R}, u_0\in H^s(\mathbb{R}^n)$. Under the assumptions of Theorem \ref{thm3}, assume $u_{\epsilon}$ is the solution of the Cauchy problem \eqref{1-A}. Then for any $d\in\mathbb{N}$, $p \in \left\{ 1, \dots, m_0 \right\}$ and $0 \leq t \leq T$, 
$$
\left\| H^d_{\delta, p} u_{\epsilon}(t) \right\|^2_{H^s(\mathbb{R}^n)} + \Lambda^{-1} \sum_{k = 1}^{m_0} \int_0^t \left\|  \partial_{x_k} H^d_{\delta, p} u_{\epsilon}(\tau) \right\|^2_{H^s(\mathbb{R}^n)} d\tau \leq \left( A^{d + 1} d! \right)^2,
$$
with $A >0$ independent of $ \epsilon$ and $d$.
\end{prop}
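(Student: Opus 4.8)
The plan is to prove the estimate by induction on $d$, running an $H^s$ energy argument on the equation obtained after applying the directional operator $H^d_{\delta, p}$ to \eqref{1-A}. The base case $d = 0$ is exactly the energy bound \eqref{A0}, so it suffices to require $A \ge A_0$. For the inductive step I assume the asserted estimate for all orders $0, 1, \dots, d-1$ and set $v = H^d_{\delta, p} u_\epsilon$; the goal is to control $\Phi_d(t) := \|v(t)\|^2_{H^s} + \Lambda^{-1}\sum_k\int_0^t\|\partial_{x_k}v\|^2_{H^s}\,d\tau$ by $(A^{d+1}d!)^2$.

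First I would apply $H^d_{\delta, p}$ to \eqref{1-A}. Since $\partial_{x_k}$ commutes with $H_{\delta, p}$, the divergence term becomes $\sum_{k,j}\partial_{x_k} H^d_{\delta, p}(a_{kj}\partial_{x_j}u_\epsilon)$, while the transport part is treated by the commutator identity of Lemma \ref{lemma1}, namely $[\partial_t + X, H^d_{\delta, p}] = d\delta t^{\delta - 1}\partial_{x_p} H^{d-1}_{\delta, p}$, which moves $H^d_{\delta, p}$ across $\partial_t + X$ at the cost of the single lower-order term $d\delta t^{\delta-1}\partial_{x_p}H^{d-1}_{\delta, p}u_\epsilon$. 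Taking the $H^s$ inner product with $v$, the $\partial_t$ contribution produces $\tfrac12\frac{d}{dt}\|v\|^2_{H^s}$, the term $(Xv, v)_{H^s}$ is absorbed into $C\|v\|^2_{H^s}$ (the $H^s$ analogue of the bound in Lemma \ref{prelemma-1}), Lemma \ref{lemma3} extracts the coercive contribution $\Lambda^{-1}\sum_k\|\partial_{x_k}v\|^2_{H^s}$ from the divergence term, and Lemmas \ref{lemma3+?} and \ref{lemma4+} control the first-order terms, the reaction part $Yu_\epsilon$, and the source $H^d_{\delta, p}g_\epsilon$ (the last bounded by $\tilde{C}^{d+1}d!$, with $\tilde{C}$ independent of $\epsilon$).

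This yields a differential inequality whose right-hand side splits into three kinds of contributions: terms $\lesssim \|v\|^2_{H^s}$; products of a level-$d$ factor ($\|v\|_{H^s}$ or $\sum_k\|\partial_{x_k}v\|_{H^s}$) with a genuinely lower-order factor (involving $H^{d-\ell}_{\delta, p}u_\epsilon$ or $\partial_{x_j}H^{d-\ell}_{\delta, p}u_\epsilon$ for $\ell \ge 1$); and the commutator and source contributions. I would apply Young's inequality to absorb every occurrence of $\sum_k\|\partial_{x_k}v\|^2_{H^s}$ (including the level-$d$ cross term $\tilde{B}\sum_k\|\partial_{x_k}v\|_{H^s}\|v\|_{H^s}$ coming from the $\ell=0$ part of Lemma \ref{lemma3+?}) into a fixed fraction of the coercive term on the left, integrate in time using $\|v(0)\|_{H^s}=0$ from Lemma \ref{estimate up}, and invoke Grönwall's inequality to dispose of the $\|v\|^2_{H^s}$ terms. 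The surviving lower-order factors are then estimated by the induction hypothesis, which bounds $\sup_t\|H^{d-\ell}_{\delta, p}u_\epsilon\|_{H^s}$ and $\int_0^t\sum_j\|\partial_{x_j}H^{d-\ell}_{\delta, p}u_\epsilon\|^2_{H^s}\,d\tau$ by $(A^{d-\ell+1}(d-\ell)!)^2$.

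The heart of the argument, and the main obstacle, is the combinatorial bookkeeping that keeps $A$ independent of $d$. Two mechanisms must align. The commutator term carries an explicit factor $d$ which, combined with the order-$(d-1)$ bound $A^{d}(d-1)!$, reproduces exactly $A^{d}d!$, consistent with the target $A^{d+1}d!$ once a single power of $A$ is spent on the $d$-independent constants. The Leibniz sums satisfy $\binom{d}{\ell}(\ell-2)!(d-\ell)! = d!/(\ell(\ell-1))$, so that $\sum_{\ell=1}^d\binom{d}{\ell}\tilde{B}^{\ell+1}(\ell-2)!\,A^{d-\ell+1}(d-\ell)! = \tilde{B}\,A^{d+1}d!\,S(A)$, where $S(A) = \sum_{\ell\ge1}(\tilde{B}/A)^{\ell}/(\ell(\ell-1)) \to 0$ as $A\to\infty$. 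Choosing $A$ sufficiently large (depending on $T, B, C, n, \Lambda, m_0, r, \delta$ but not on $d$ or $\epsilon$) renders every such loss factor small enough to be dominated by one extra power of $A$; solving the resulting quadratic relation $\sqrt{\Phi_d}\le \tfrac12 A^{d+1}d! + \tfrac12 A^{d+1}d!$ then closes the induction and gives $\Phi_d(t)\le (A^{d+1}d!)^2$.
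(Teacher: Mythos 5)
Your proposal is correct and follows essentially the same route as the paper: induction on $d$, an $H^s$ energy estimate on the equation obtained by applying $H^d_{\delta,p}$ to \eqref{1-A}, the commutator identity of Lemma \ref{lemma1}, coercivity from Lemma \ref{lemma3}, the Leibniz-type bounds of Lemmas \ref{lemma3+?} and \ref{lemma4+}, the vanishing initial value from Lemma \ref{estimate up}, absorption of the gradient terms, Gr\"onwall, and a final choice of $A$ large but independent of $d$ and $\epsilon$. The only cosmetic slip is that your closed form $\binom{d}{\ell}(\ell-2)!(d-\ell)!=d!/(\ell(\ell-1))$ fails at $\ell=1$ (where the paper's convention $(-1)!=1$ gives the term $d!$ instead), but that term is absorbed by the same mechanism, so the argument is unaffected.
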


Since $A$ is independent of $\epsilon$, by using the compactness and uniqueness of the solution, we have then proved Theorem \ref{thm3}.

\begin{proof}[Proof of Proposition \ref{prop4.1} ]
For $d=0$, it has been proved as \eqref{A0}. Assume that $d \ge 1$, and for all $0 \leq m \leq d - 1$, $p \in \left\{ 1, \dots, m_0 \right\}, 0 \leq t \leq T$, we have the induction assumption:  
\begin{equation}\label{induction hypothesis}
     \left\| H^m_{\delta, p} u_{\epsilon}(t) \right\|^2_{H^s(\mathbb{R}^n)} + \Lambda^{-1} \sum_{k = 1}^{m_0} \int_0^t \left\| \partial_{x_k} H^m_{\delta, p} u_{\epsilon}(\tau) \right\|^2_{H^s(\mathbb{R}^n)} d\tau \leq \left( A^{m + 1} m!\right)^2.
\end{equation}
Now, we show that \eqref{induction hypothesis} holds true for $m=d$. Taking the derivative of the equation in \eqref{1-A}, we have
\begin{equation}\label{alone}
\begin{split}
	&\left( \partial_t + X \right) H_{\delta, p}^d u_\epsilon - \left[ \partial_t + X, H_{\delta, p}^d \right] u_\epsilon + H_{\delta, p}^d (Y u_\epsilon) \\
	&\quad + \sum_{k, j = 1}^{m_0} H_{\delta, p}^d \left( \left( \partial_{x_k} a_{kj} \right) \left( \partial_{x_j} u \right) \right) - \sum_{k, j = 1}^{m_0} H_{\delta, p}^d \partial_{x_k} \left( a_{kj} \partial_{x_j} u_\epsilon \right) = H_{\delta, p}^d g_\epsilon .
\end{split}
\end{equation}
Then by taking scalar product in $H^s$ with $H_{\delta, p}^d u_\epsilon$ in \eqref{alone}, we obtain
\begin{align*}
&\quad \frac{1}{2} \frac{d}{dt} \left\| H_{\delta, p}^d u_\epsilon \right\|_{H^s}^2 +\left( X H_{\delta, p}^d u_\epsilon, H_{\delta, p}^d u_\epsilon \right)_{H^s} - \left( \left[ \partial_t + X, H_{\delta, p}^d \right] u_\epsilon, H_{\delta, p}^d u_\epsilon \right)_{H^s} \\
&+\sum_{k, j = 1}^{m_0} \left( H_{\delta, p}^d \left( (\partial_{x_k}a_{kj}) (\partial_{x_j} u_\epsilon) \right), H_{\delta, p}^d u_\epsilon \right)_{H^s}
 - \sum_{k, j = 1}^{m_0} \left( H_{\delta, p}^d \partial_{x_k}\left( a_{kj} \partial_{x_j} u_\epsilon \right), H_{\delta, p}^d u_\epsilon \right)_{H^s} \\
&\qquad + \left(H_{\delta, p}^d (Y u_\epsilon), H_{\delta, p}^d u_\epsilon \right)_{H^s} = \left( H_{\delta, p}^d g_\epsilon ,  H_{\delta, p}^d u_\epsilon \right)_{H^s}.
\end{align*}
Since $u_\epsilon \in C^\infty([0, T], \mathscr{S}(\mathbb{R}^n))$, we have
\begin{equation}\label{OO}
\left|\left( X H_{\delta, p}^d u_\epsilon, H_{\delta, p}^d u_\epsilon \right)_{H^s}\right| \le C_{10} \left\| H_{\delta, p}^d u_\epsilon \right\|_{H^s}^2, 
\end{equation}
the proof of \eqref{OO} is almost identical to that of Lemma \ref{prelemma-1}. By using Lemma \ref{lemma1} and the Cauchy-Schwarz inequality,
\begin{equation}\label{5-0-20}
\left|\left( \left[ \partial_t + X, H_{\delta, p}^d \right] u_\epsilon, H_{\delta, p}^d u_\epsilon \right)_{H^s}\right| \leq  \left( d \delta t^{\delta - 1} \right)^2 \left\| \partial_{x_p} H_{\delta, p}^{d - 1} u_\epsilon \right\|_{H^s}^2 + \left\| H_{\delta, p}^d u_\epsilon \right\|_{H^s}^2.
\end{equation}
Using the Cauchy-Schwarz inequality again, 
\begin{equation}\label{5-0-2}
 \left| \left( H_{\delta, p}^d g_\epsilon, H_{\delta, p}^d u_\epsilon \right)_{H^s}\right| \leq  \left\| H_{\delta, p}^d g_\epsilon \right\|_{H^s}^2 + \left\| H_{\delta, p}^d u_\epsilon \right\|_{H^s}^2.
\end{equation}
Using Lemma \ref{lemma3+?}, Lemma \ref{lemma3} and above estimates \eqref{OO}, \eqref{5-0-20},
\eqref{5-0-2}, we have
\begin{align*}
&\quad \frac{1}{2} \frac{d}{dt} \left\| H_{\delta, p}^d u_\epsilon \right\|_{H^s}^2 + \Lambda^{-1} \sum_{j = 1}^{m_0} \left\| \partial_{x_j} H_{\delta, p}^d u_\epsilon \right\|_{H^s}^2 \\
&\leq \left\| H_{\delta, p}^d g_\epsilon \right\|_{H^s}^2 + (2 + C_{9}) \left\| H_{\delta, p}^d u_\epsilon \right\|_{H^s}^2+ (d\delta t^{\delta - 1})^2 \left\| \partial_{x_p} H_{\delta, p}^{d - 1} u_\epsilon \right\|_{H^s}^2\\
&\quad + \sum_{\ell = 1}^d \binom{d}{\ell} \tilde{B}^{\ell + 1} (\ell - 2)! \sum_{j = 1}^{m_0} \left\| H_{\delta, p}^{d - \ell} \partial_{x_j} u_\epsilon \right\|_{H^s} \sum_{k = 1}^{m_0} \left\| \partial_{x_k} H_{\delta, p}^d u_\epsilon \right\|_{H^s} \\
&\quad + \sum_{\ell = 1}^d \binom{d}{\ell} \tilde{B}^{\ell + 1} (\ell - 2)! \Big( \sum_{j = 1}^{m_0} \left\| H_{\delta, p}^{d - \ell} \partial_{x_j} u_\epsilon \right\|_{H^s} + \left\| H_{\delta, p}^{d - \ell} u_\epsilon \right\|_{H^s} \Big) \left\| H_{\delta, p}^d u_\epsilon \right\|_{H^s} \\
&\quad + \tilde{B} \Big( \sum_{j = 1}^{m_0} \left\| H_{\delta, p}^d \partial_{x_j} u_\epsilon \right\|_{H^s} + \left\| H_{\delta, p}^d u_\epsilon \right\|_{H^s} \Big) \left\| H_{\delta, p}^d u_\epsilon \right\|_{H^s} \\
&\quad + \tilde{B} \Big( \sum_{k = 1}^{m_0} \left\| \partial_{x_k} H_{\delta, p}^d u_\epsilon \right\|_{H^s} \Big) \left\| H_{\delta, p}^d u_\epsilon \right\|_{H^s}.
\end{align*}
Now we integrate the above inequality from 0 to t,
\begin{align*}
&\quad \left\| H_{\delta, p}^d u_\epsilon (t) \right\|_{H^s}^2 +  2\Lambda^{-1} \sum_{j = 1}^{m_0} \int_0^t \left\| \partial_{x_j} H_{\delta, p}^d u_\epsilon (s) \right\|_{H^s}^2 ds \\
&\leq 2 \int_0^t \left\| H_{\delta, p}^d g_\epsilon(\tau) \right\|_{H^s}^2 d\tau
+ C_{10} \int^t_0\left\| H_{\delta, p}^d u_\epsilon (\tau)\right\|_{H^s}^2d\tau
+ C_{T}' d^2 \int_0^t \left\| H_{\delta, p}^{d - 1}\partial_{x_p} u_\epsilon (\tau) \right\|_{H^s}^2 d\tau \\
&\quad + 2 \sum_{\ell = 1}^d \binom{d}{\ell} \tilde{B}^{\ell + 1} (\ell - 2)! \sum_{k, j = 1}^{m_0} \int_0^t \left\| H_{\delta, p}^{d - \ell} \partial_{x_j} u_\epsilon (\tau) \right\|_{H^s} \left\| \partial_{x_k} H_{\delta, p}^d u_\epsilon (\tau) \right\|_{H^s} d\tau \\
&\quad + 2 \sum_{\ell = 1}^d \binom{d}{\ell} \tilde{B}^{\ell + 1} (\ell - 2)! \sum_{j = 1}^{m_0} \int_0^t \left\| H_{\delta, p}^{d - \ell} \partial_{x_j} u_\epsilon (\tau) \right\|_{H^s} \left\| H_{\delta, p}^d u_\epsilon (\tau) \right\|_{H^s} d\tau \\
&\quad + 2 \sum_{\ell = 1}^d \binom{d}{\ell} \tilde{B}^{\ell + 1} (\ell - 2)! \int_0^t \left\| H_{\delta, p}^{d - \ell} u_\epsilon (\tau) \right\|_{H^s} \left\| H_{\delta, p}^d u_\epsilon (\tau) \right\|_{H^s} d\tau \\
&\quad + 4\tilde{B} \sum_{j = 1}^{m_0} \int_0^t \left\| H_{\delta, p}^d \partial_{x_j} u_\epsilon (\tau) \right\|_{H^s} \left\| H_{\delta, p}^d u_\epsilon (\tau) \right\|_{H^s} d\tau, 
\end{align*}
here we use $\left\| H_{\delta, p}^{d} u_\epsilon (t) \right\|_{H^s}^2 \bigg|_{t = 0} = 0$, i. e. Lemma \ref{estimate up}.
By using the Cauchy-Schwarz inequality, we have
\begin{align*}
&\quad \left\| H_{\delta, p}^d u_\epsilon (t) \right\|_{H^s}^2 + \Lambda^{-1} \sum_{i = 1}^{m_0} \int_0^t \left\| \partial_{x_i} H_{\delta, p}^d u_\epsilon (s) \right\|_{H^s}^2 ds \\
&\leq  2\int_0^t \left\| H_{\delta, p}^d g_\epsilon(\tau) \right\|_{H^s}^2 d\tau
+ C_{11} \int^t_0\left\| H_{\delta, p}^d u_\epsilon (\tau)\right\|_{H^s}^2d\tau
+ C_T' d^2 \int_0^t \Big\| H_{\delta, p}^{d - 1}\partial_{x_p} u_\epsilon (\tau) \Big\|_{H^s}^2 d\tau \\
&\quad\quad + \sum_{\ell = 1}^d \binom{d}{\ell} {B'}^{\ell + 1} (\ell - 2)!\Big(\sum_{j = 1}^{m_0} \int^t_0 \left\| H_{\delta, p}^{d - \ell} \partial_{x_j} u_\epsilon(\tau) \right\|^2_{H^s}d\tau\Big)^{\frac 12} \\
&\quad\quad + \sum_{\ell = 1}^d \binom{d}{\ell} {B'}^{\ell + 1} (\ell - 2)!\Big( \int^t_0 \left\| H_{\delta, p}^{d - \ell} u_\epsilon(\tau) \right\|^2_{H^s}d\tau\Big)^{\frac 12} .
\end{align*}
Using the induction hypothesis \eqref{induction hypothesis} and Lemma \ref{lemma4+}, we obtain
\begin{align*}
&\left\| H_{\delta, p}^d u_\epsilon (t) \right\|_{H^s}^2 + \Lambda^{-1} \sum_{j = 1}^{m_0} \int_0^t \left\| \partial_{x_j} H_{\delta, p}^d u_\epsilon (s) \right\|_{H^s}^2 ds \\
&\qquad  \leq C_{12} \left( A^d d! \right)^2 + C_{13} \int_0^t \left\| H_{\delta, p}^d u_\epsilon (s) \right\|_{H^s}^2 ds\ .
\end{align*}
By using the Gronwall's inequality, we have
$$
\left\| H_{\delta, p}^d u_\epsilon (t) \right\|_{H^s}^2 \leq C_{12} \left[ 1 + C_{13} T e^{C_{13} T} \right] \left( A^d d! \right)^2,
$$
which leads to
$$
\left\| H_{\delta, p}^d u_\epsilon (t) \right\|_{H^s}^2 + \Lambda^{-1} \sum_{j = 1}^{m_0} \int_0^t \left\| \partial_{x_j} H_{\delta, p}^d u_\epsilon (s) \right\|_{H^s}^2 ds \leq C_{14} \left( A^d d! \right)^2.
$$
Now we take $A \ge \max \left\{ A_0, \sqrt{C_{14}} \right\}$ to obtain
$$
\left\| H^d_{\delta, p} u_{\epsilon}(t) \right\|^2_{H^s(\mathbb{R}^n)} + \Lambda^{-1}  \sum_{j = 1}^{m_0} \int_0^t \left\|  \partial_{x_j} H^d_{\delta, p} u_{\epsilon}(s) \right\|^2_{H^s(\mathbb{R}^n)} ds \leq \left( A^{d + 1} d! \right)^2.
$$
Thus, \eqref{induction hypothesis} holds true for $m = d$. And $A>0$ is independent of $\epsilon$ and $d$.
\end{proof}

\section{Analyticity of the solution}\label{section6}
In this section, we aim to prove the analyticity of the spatial variables $x\in\mathbb{R}^n$ for the solution of the Cauchy problem \eqref{1-1} when $t>0$. Recall that we have proved in Theorem \ref{thm3}, for  $\delta > 1$, there exists $A > 0$ such that, for any $d \in \mathbb{N}$,
\begin{equation}\label{5.1}
\left\| H^d_{\delta, p} u(t) \right\|_{H^s(\mathbb{R}^n)}\le A^{d+1}_\delta d!,\quad p=1, \cdots, m_0\ ,
\end{equation}
here we emphasize in particular that $A_\delta$ is {\em dependent} on $\delta$. We  will then use the above estimate \eqref{5.1} to prove the following proposition.
\begin{prop}\label{prop estimate1}
Let $u$ be the solution of the Cauchy problem in Theorem \ref{thm3} and $\delta > 1$. Then there exists $\tilde {A} > 0$ such that, for any $d \in \mathbb{N}$,   $p \in \left\{ 1, \dots, m_0 \right\}$ and $t\in ]0, T]$,
\begin{equation}\label{prior1}
t^{(\delta + r + \ell) d} \left\| X_{p, \ell}^{d} u(t) \right\|_{H^s(\mathbb{R}^n)} \leq \tilde{A}^{d + 1} d !, \ \ \ \ell = 0, 1, \dots, r.
\end{equation}
\end{prop}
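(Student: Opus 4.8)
The plan is to recover each constant-coefficient field $X_{p,\ell}$ from the auxiliary fields by inverting the defining relation \eqref{2-1}, and then to control the resulting powers by the single-field estimate \eqref{5.1}. Since Theorem \ref{thm3} is available for \emph{every} $\delta>1$, I would apply it to the $r+1$ shifted exponents $\delta,\delta+1,\dots,\delta+r$, all of which exceed $1$. Writing $\gamma_q(x)=\Gamma(x+1)/\Gamma(x+1+q)$ and using \eqref{H-p}, one has at each fixed $t$
\[
t^{-\delta-m}H_{\delta+m,p}=\sum_{q=0}^r \gamma_q(\delta+m)\,t^{q}X_{p,q},\qquad m=0,\dots,r .
\]
The matrix $G=(\gamma_q(\delta+m))_{0\le m,q\le r}$ is invertible: after multiplying row $m$ by $\prod_{i=1}^r(\delta+m+i)\neq0$, its entries become the polynomials $\prod_{i=q+1}^r(x+i)$ evaluated at the distinct nodes $x=\delta+m$, and these have the distinct degrees $r,r-1,\dots,0$, hence span $\mathbb{P}_r$ and give a nonsingular collocation matrix. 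Inverting yields, for each $\ell$,
\[
X_{p,\ell}=\sum_{m=0}^r (G^{-1})_{\ell m}\,t^{-\delta-m-\ell}\,H_{\delta+m,p}.
\]

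At a fixed time $t$ every $H_{\delta+m,p}$ is a constant-coefficient first-order operator (a combination of the pairwise commuting fields $X_{p,0},\dots,X_{p,r}$), so the operators $\{H_{\delta+m,p}\}_{m}$ commute. This lets me expand by the multinomial theorem,
\[
X_{p,\ell}^{d}=\sum_{|\beta|=d}\frac{d!}{\beta!}\Big(\prod_{m=0}^r (G^{-1})_{\ell m}^{\beta_m}\,t^{-\beta_m(\delta+m+\ell)}\Big)\prod_{m=0}^r H_{\delta+m,p}^{\beta_m},
\]
the factors of the last product commuting so that their order is immaterial. The essential difficulty is now visible: \eqref{5.1} controls only the pure powers $\|H_{\delta+m,p}^{d}u\|_{H^s}$, whereas the expansion produces the \emph{mixed} products $\prod_m H_{\delta+m,p}^{\beta_m}u$. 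This mixed-product bound is, I expect, the crux of the argument.

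To overcome it I would pass to the Fourier side. Each $H_{\delta+m,p}$ acts as multiplication by a purely imaginary symbol $i\,\tau_m(t,\xi)$ with $\tau_m$ real, so with $d\nu(\xi)=\langle\xi\rangle^{2s}|\hat u(\xi)|^2\,d\xi$ one has $\|\prod_m H_{\delta+m,p}^{\beta_m}u\|_{H^s}^2=\int \prod_m|\tau_m|^{2\beta_m}\,d\nu$. The generalized Hölder inequality with exponents $d/\beta_m$ (whose reciprocals sum to $1$) then gives
\[
\Big\|\prod_{m=0}^r H_{\delta+m,p}^{\beta_m}u\Big\|_{H^s}\le \prod_{m=0}^r\|H_{\delta+m,p}^{d}u\|_{H^s}^{\beta_m/d}\le d!\,\bar A^{\,d+1},
\]
where the last step uses \eqref{5.1} at $\delta+m$ with $\bar A=\max_{0\le m\le r}A_{\delta+m}\ge1$ and $(d!)^{\sum\beta_m/d}=d!$ since $|\beta|=d$. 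Substituting this, multiplying by the weight $t^{(\delta+r+\ell)d}$, and noting that the total exponent of $t$ in each term is $(\delta+r+\ell)d-(\delta+\ell)d-\sum_m m\beta_m=rd-\sum_m m\beta_m\ge0$, every power of $t$ is nonnegative and hence bounded on $]0,T]$ by $\max(1,T)^{rd}$. Finally the multinomial sum $\sum_{|\beta|=d}\frac{d!}{\beta!}\prod_m|(G^{-1})_{\ell m}|^{\beta_m}=\big(\sum_m|(G^{-1})_{\ell m}|\big)^{d}$ collapses to a geometric factor, and absorbing all $d$-th power constants into a single $\tilde A$ yields $t^{(\delta+r+\ell)d}\|X_{p,\ell}^{d}u(t)\|_{H^s}\le \tilde A^{d+1}d!$, as claimed. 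Once the Fourier/Hölder step bridging \eqref{5.1} and the cross terms is in place, the invertibility of $G$ and the exponent bookkeeping are routine.
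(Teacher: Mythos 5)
Your proof is correct, but it takes a genuinely different route from the paper's. The paper never inverts the full $(r+1)\times(r+1)$ system at once: it constructs, via the iteration \eqref{iterative format}, derived operators $H^{(k)}_{\delta,p}$ whose expansion in the basis $\{X_{p,q}\}$ is upper triangular (in Lemma \ref{formHk} the sum starts at $q=k$), so that $t^{\delta+2r}X_{p,r}$ is recovered exactly from $H^{(r)}_{\delta,p}$ and the remaining $t^{\delta+r+\ell}X_{p,\ell}$ follow by back-substitution and a downward induction on $\ell$ (Lemma \ref{Xpell-relationship}). This triangular structure lets the paper avoid mixed products entirely: at each stage only a sum of two Fourier multipliers is raised to the $d$-th power, handled by the elementary pointwise bound $\|(T_1+T_2)^d u\|_{H^s}\le 2^d(\|T_1^d u\|_{H^s}+\|T_2^d u\|_{H^s})$. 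You instead invert the system in one shot (your collocation argument for the nonsingularity of $G$ is correct, and is information the paper never needs), accept the mixed products $\prod_m H^{\beta_m}_{\delta+m,p}u$ generated by the multinomial expansion, and control them by the Fourier-side H\"older interpolation $\|\prod_m T_m^{\beta_m}u\|_{H^s}\le\prod_m\|T_m^{d}u\|_{H^s}^{\beta_m/d}$; that interpolation inequality is the one genuinely new ingredient relative to the paper, and it is valid precisely because each $H_{\delta+m,p}$ is, at fixed $t$, a constant-coefficient multiplier. Both arguments rest on the same input, namely \eqref{5.1} applied at the shifted exponents $\delta,\delta+1,\dots,\delta+r$ (the paper's Lemma \ref{lemma estimate2} likewise lets $A'$ depend on $A_\delta,\dots,A_{\delta+r}$). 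The paper's route buys freedom from both the matrix inversion and any mixed-product lemma, at the price of constructing the $H^{(k)}$'s and running two inductions; your route buys a more systematic, induction-free derivation whose only nontrivial analytic step is the interpolation bound, and your exponent bookkeeping ($rd-\sum_m m\beta_m\ge 0$) and the collapse of the multinomial sum are both correct.
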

We first try to represent $X_{p, \ell}$ as a combination of $H_{\delta+j, p}, j=0, \cdots, r$ with coefficients as polynomials of $t$.

Setting
\begin{equation}\label{uselater1}
H_{\delta, p}^{(0)} =(\delta + 1) \dots (\delta + r)H_{\delta, p}= \frac{\Gamma(\delta + 1 + r)}{\Gamma(\delta + 1)} H_{\delta, p},
\end{equation}
and
\begin{equation}\label{uselater-2}
H_{\delta, p}^{(1)} = (\delta + r + 1) t H_{\delta, p}^{(0)} - (\delta  + 1) H_{\delta + 1, p}^{(0)},
\end{equation}
then, by iteration, for any integer $k \in \mathbb{N}$,
\begin{equation}\label{iterative format}
H_{\delta, p}^{(k + 1)} = (\delta + r + 1 + k) t H_{\delta, p}^{(k)} - (\delta + 2k + 1) H_{\delta + 1, p}^{(k)}.
\end{equation}
Since ${\bf X}_{q} = 0$ if $q > r$ by the assumption ${\bf {(H_1)}}$, we only need to pay attention to the case of $0 \leq k \leq r$.

Since the family of vector fields $\{ H_{\delta, p}^{(k)};\ 0 \leq k \leq r, 1\le p\le m_0\}$ are just linear combinations of $\{ H_{\delta, p}; 1\le p\le m_0\}$ with coefficients as polynomials of $t$, by using \eqref{5.1}, we have,
\begin{lemma}\label{lemma estimate2}
Let $u$ be the solution of the Cauchy problem in Theorem \ref{thm3}. Then there exists $A' > 0$ such that, for any $d \in \mathbb{N}$, $p \in \left\{ 1, \dots, m_0 \right\}$ and $t\in [0, T]$,
\begin{equation}\label{prior2}
\Big\| \Big( H_{\delta, p}^{(k)} \Big)^d u(t) \Big\|_{H^s(\mathbb{R}^n)} \leq (A')^{d + 1} d!, \ \ \ k = 0, 1, \dots, r ,
\end{equation}
where $A'$ depends on $A_\delta, A_{\delta+1}, \cdots, A_{\delta+r}$ give in \eqref{5.1}.
\end{lemma}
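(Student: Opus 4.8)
The plan is to reduce estimates for powers of the auxiliary operator $H_{\delta,p}^{(k)}$ to the bounds \eqref{5.1} already established for powers of the basic operators $H_{\delta+j,p}$, $j=0,\dots,r$. First I would prove, by induction on $k$ and uniformly in $\delta>1$, the structural identity
\[ H_{\delta,p}^{(k)}=\sum_{j=0}^{k}P_{k,j}^{\delta}(t)\,H_{\delta+j,p}, \]
where each $P_{k,j}^{\delta}$ is a polynomial in $t$ whose coefficients depend only on $\delta,r,k$. The case $k=0$ is immediate from \eqref{uselater1}, since $H_{\delta,p}^{(0)}$ is a constant multiple of $H_{\delta,p}$. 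For the inductive step I would insert the representations of $H_{\delta,p}^{(k)}$ and of $H_{\delta+1,p}^{(k)}$ into the recursion \eqref{iterative format}: the term $(\delta+r+1+k)t\,H_{\delta,p}^{(k)}$ contributes $H_{\delta+j,p}$ for $0\le j\le k$, while $-(\delta+2k+1)H_{\delta+1,p}^{(k)}$ contributes $H_{\delta+j,p}$ for $1\le j\le k+1$, so the sum ranges over $0\le j\le k+1$ with polynomial coefficients. Since only $0\le k\le r$ occurs and $t$ lies in the compact interval $[0,T]$, all coefficients are bounded, and I set $M:=\max_{0\le k\le r}\max_{0\le j\le k}\sup_{[0,T]}|P_{k,j}^{\delta}|<\infty$.

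Next I would use that the vector fields $X_{p,q}$ have constant coefficients and hence commute pairwise; consequently, at each fixed $t$ the operators $H_{\delta,p},\dots,H_{\delta+k,p}$ are commuting constant-coefficient differential operators in $x$, and the multinomial theorem gives
\[ \big(H_{\delta,p}^{(k)}\big)^{d}=\sum_{\substack{\beta\in\mathbb{N}^{k+1}\\ |\beta|=d}}\frac{d!}{\beta!}\Big(\prod_{j=0}^{k}P_{k,j}^{\delta}(t)^{\beta_j}\Big)\prod_{j=0}^{k}H_{\delta+j,p}^{\beta_j}. \]
Applying this to $u(t)$, using $\big|\prod_j P_{k,j}^{\delta}(t)^{\beta_j}\big|\le M^{d}$ and $\sum_{|\beta|=d}d!/\beta!=(k+1)^{d}\le(r+1)^{d}$, the whole estimate is reduced to controlling the mixed products $\big\|\prod_{j=0}^{k}H_{\delta+j,p}^{\beta_j}u(t)\big\|_{H^s}$ with $|\beta|=d$.

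The main obstacle is that \eqref{5.1} only bounds powers of a \emph{single} $H_{\delta',p}$, whereas single-operator bounds on a fixed $u$ do not by themselves control mixed products of non-parallel commuting operators. I would therefore establish the joint estimate
\[ \Big\|\prod_{j=0}^{k}H_{\delta+j,p}^{\beta_j}u(t)\Big\|_{H^s}\le \hat{A}^{|\beta|+1}|\beta|! \]
by re-running the energy argument of Proposition \ref{prop4.1}, organized now as an induction on $|\beta|$ rather than on a single power. The only new ingredient is the commutator of $\partial_t+X$ with a product: Lemma \ref{lemma1} (with $d=1$, applied with parameter $\delta+j$) gives $[\partial_t+X,H_{\delta+j,p}]=(\delta+j)t^{\delta+j-1}\partial_{x_p}$, and since all blocks $H_{\delta+j,p}$ and $\partial_{x_p}$ commute at fixed $t$, the Leibniz rule reduces $[\partial_t+X,\prod_j H_{\delta+j,p}^{\beta_j}]$ to a sum of products of strictly lower total order, each carrying a factor $\partial_{x_p}$; the contributions of $Y$ and of the second-order part are treated exactly as in Lemmas \ref{lemma3+?} and \ref{lemma3}. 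The resulting constant $\hat{A}$ depends only on the data $T,B,C,n,\Lambda,m_0,r,\delta$ entering Theorem \ref{thm3}, hence on $A_\delta,\dots,A_{\delta+r}$. Combining the joint estimate with the multinomial bound yields
\[ \big\|\big(H_{\delta,p}^{(k)}\big)^{d}u(t)\big\|_{H^s}\le (r+1)^{d}M^{d}\,\hat{A}^{d+1}d!\le (A')^{d+1}d!, \]
with $A':=(r+1)M\hat{A}$ (enlarged if necessary so that $A'\ge\hat{A}$), which is the assertion.
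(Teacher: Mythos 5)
Your outline has the right first two steps: the representation $H_{\delta,p}^{(k)}=\sum_{j=0}^{k}P_{k,j}^{\delta}(t)\,H_{\delta+j,p}$ with polynomial coefficients does follow from \eqref{uselater1} and the recursion \eqref{iterative format}, and since the $H_{\delta+j,p}$ are, at each fixed $t$, constant-coefficient vector fields in $x$, they commute and the multinomial expansion is legitimate. The problem is the third step. You correctly observe that \eqref{5.1} only controls pure powers, but your resolution --- ``re-run the energy argument of Proposition \ref{prop4.1} for mixed products $\prod_j H_{\delta+j,p}^{\beta_j}$'' --- is precisely the hard part of the whole section, and you leave it entirely as an assertion. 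Making it rigorous would require multi-index versions of Lemmas \ref{lemma2+}, \ref{lemma3+?}, \ref{lemma3}, \ref{lemma4+} and \ref{estimate up}, a multi-index Leibniz formula, and a reorganized induction on $|\beta|$ including the approximation by $u_\epsilon$; none of this is carried out, so as written the proof is incomplete at its decisive point. Moreover, the ``main obstacle'' you identify is illusory: because each $H_{\delta+j,p}$ is a Fourier multiplier at fixed $t$, with symbol $m_j(t,\xi)$, the pointwise inequality $\prod_j|m_j|^{\beta_j}\le\big(\max_j|m_j|\big)^{|\beta|}\le\sum_j|m_j|^{|\beta|}$ gives directly $\big\|\prod_j H_{\delta+j,p}^{\beta_j}u\big\|_{H^s}\le\sum_{j=0}^{k}\big\|H_{\delta+j,p}^{|\beta|}u\big\|_{H^s}$, and \eqref{5.1} (applied with the shifted parameters $\delta,\dots,\delta+r$, which is exactly why $A'$ depends on $A_\delta,\dots,A_{\delta+r}$) then closes your argument with no new energy estimates at all. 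Inserting this one line in place of your joint estimate makes your proof correct.

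For comparison, the paper does not form the full linear combination at all: it inducts on $k$ directly through the two-term recursion \eqref{iterative format}, using the elementary bound $\|(T_1+T_2)^d u\|_{H^s}\le 2^d(\|T_1^d u\|_{H^s}+\|T_2^d u\|_{H^s})$ for Fourier multipliers (the remark following Proposition \ref{prop estimate1}) to reduce $(H_{\delta,p}^{(k)})^d$ to pure powers of $H_{\delta,p}^{(k-1)}$ and $H_{\delta+1,p}^{(k-1)}$, with constants $A'_k=2(\delta+2r)(T+1)A'_{k-1}$ and base case $A'_0=(\delta+1)\cdots(\delta+r)A$. That route is shorter because it never needs the explicit polynomials $P_{k,j}^{\delta}$ or the multinomial sum; the price of your route, even once repaired, is the extra combinatorial factor $(r+1)^dM^d$, which is harmless but avoidable.
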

The following two lemmas together provide an explicit relation between $\{H_{\delta, p}^{(k)}\}$ and $\{X_{p, \ell}\}$,
\begin{lemma}\label{formHk}
For $\delta > 1$, $p \in \left\{ 1, \dots, m_0 \right\}$, and $0 \leq k \leq r$, we have
\begin{equation}\label{general}
H_{\delta, p}^{(k)} = \sum_{q = k}^{r} \frac{q!}{(q - k)!} \frac{\Gamma (\delta + r + 1 + k)}{\Gamma (\delta + q + 1 + k)}  t^{\delta + k + q} X_{p, q}.
\end{equation}
\end{lemma}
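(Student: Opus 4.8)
The plan is to argue by induction on $k$, proving that \eqref{general} holds for the given index $k$ \emph{simultaneously for every} $\delta > 1$; this uniformity in $\delta$ is essential, since the recurrence \eqref{iterative format} couples $H_{\delta, p}^{(k)}$ with $H_{\delta + 1, p}^{(k)}$, and I will need to invoke the inductive hypothesis at both $\delta$ and $\delta + 1$. For the base case $k = 0$, I combine the defining identity \eqref{uselater1} with the explicit expansion $H_{\delta, p} = \sum_{q = 0}^r \frac{\Gamma(\delta + 1)}{\Gamma(\delta + 1 + q)} t^{\delta + q} X_{p, q}$ coming from \eqref{2-1} and the formula for $I^q(h_\delta)$; the prefactor $\frac{\Gamma(\delta + 1 + r)}{\Gamma(\delta + 1)}$ cancels the $\Gamma(\delta + 1)$ in each summand and leaves exactly $\sum_{q = 0}^r \frac{\Gamma(\delta + r + 1)}{\Gamma(\delta + q + 1)} t^{\delta + q} X_{p, q}$, which is \eqref{general} with $k = 0$ (noting $q!/(q-0)! = 1$).

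For the inductive step I assume \eqref{general} at level $k$ for all $\delta > 1$ and substitute into the recurrence \eqref{iterative format}. Writing out $H_{\delta, p}^{(k)}$ and $H_{\delta + 1, p}^{(k)}$ (the latter by replacing $\delta$ with $\delta + 1$ throughout), I use $z\,\Gamma(z) = \Gamma(z + 1)$ to absorb the scalar factors: the factor $(\delta + r + 1 + k)$ turns $\Gamma(\delta + r + 1 + k)$ into $\Gamma(\delta + r + k + 2)$, and after multiplying by $t$ both contributions share the common monomial $t^{\delta + k + q + 1}$ and the common Gamma factor $\Gamma(\delta + r + k + 2)$. Collecting terms, $H_{\delta, p}^{(k+1)}$ becomes
\begin{equation*}
\sum_{q = k}^r \frac{q!}{(q - k)!}\, \Gamma(\delta + r + k + 2)\, t^{\delta + k + q + 1} X_{p, q}
\left[ \frac{1}{\Gamma(\delta + q + 1 + k)} - \frac{\delta + 2k + 1}{\Gamma(\delta + q + k + 2)} \right].
\end{equation*}

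The crux of the argument is the simplification of the bracketed expression. Using $\Gamma(\delta + q + k + 2) = (\delta + q + k + 1)\,\Gamma(\delta + q + k + 1)$ to put both fractions over the common denominator $\Gamma(\delta + q + k + 1)$, the bracket equals
\begin{equation*}
\frac{1}{\Gamma(\delta + q + k + 1)} \cdot \frac{(\delta + q + k + 1) - (\delta + 2k + 1)}{\delta + q + k + 1}
= \frac{q - k}{\Gamma(\delta + q + k + 2)}.
\end{equation*}
This is the decisive cancellation: the numerator collapses to $q - k$, which annihilates the $q = k$ term and, together with $\frac{q!}{(q-k)!}(q - k) = \frac{q!}{(q - k - 1)!}$, shifts the summation to start at $q = k + 1$ and rewrites the combinatorial factor as $q!/(q - (k+1))!$. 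The surviving $\Gamma$-quotient is $\Gamma(\delta + r + k + 2)/\Gamma(\delta + q + k + 2)$, and the monomial $t^{\delta + (k+1) + q}$ matches the target; this is precisely \eqref{general} with $k$ replaced by $k + 1$, completing the induction. I expect the only genuine obstacle to be the careful Gamma-function and index bookkeeping in this step; everything else is formal manipulation of finite sums, and the restriction $0 \le k \le r$ is harmless since ${\bf X}_q = 0$ for $q > r$ by ${\bf (H_1)}$.
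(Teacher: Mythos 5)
Your proposal is correct and follows essentially the same route as the paper: induction on $k$ via the recurrence \eqref{iterative format}, with the decisive step being the Gamma-function identity that collapses the bracket to $(q-k)/\Gamma(\delta+q+k+2)$ and kills the lowest term of the sum (the paper phrases the step as passing from $k-1$ to $k$ and obtains the factor $q-k+1$, which is the same computation up to reindexing). Your explicit remark that the induction must be carried uniformly in $\delta$, because the recurrence invokes $H_{\delta+1,p}^{(k)}$, is a point the paper uses only implicitly.
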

Inversely, we can represent $\left\{ X_{p, \ell};\ 0 \leq \ell \leq r \right\}$ by using $\{ H_{\delta, p}^{(k)};\ 0 \leq k \leq r\}$ as follows.
\begin{lemma}\label{Xpell-relationship}
For any $p \in \left\{ 1, \dots, m_0 \right\}$, we have
\begin{equation}\label{pro5-1}
t^{\delta+2r} X_{p, r}=\frac{1}{r!} H^{(r)}_{\delta, p},
\end{equation}
and for $0\le \ell \le r-1$,
\begin{equation}\label{pro5-2}
\begin{aligned}
t^{\delta + r + \ell} X_{p, \ell} =& \frac{\Gamma (\delta + r + 1 + \ell)}{\ell! \Gamma (\delta + 2r + 1)} \Big\{ H_{\delta + r - \ell, p}^{(\ell)}\\
 &- \sum_{q = \ell + 1}^r \frac{q!}{\left( q - \ell \right)!} \frac{\Gamma (\delta + 2r + 1)}{\Gamma (\delta + r + 1 + q)} t^{\delta + r + q} X_{p, q} \Big\}.
\end{aligned}
\end{equation}
Here the right hand side is well defined iteratively for the second index of $X_{p, q}$.
\end{lemma}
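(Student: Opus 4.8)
The plan is to read off both identities \eqref{pro5-1} and \eqref{pro5-2} directly from the explicit expansion \eqref{general} of Lemma \ref{formHk}, exploiting a judicious choice of the parameter $k$ and of the base index of $H$ so that the Gamma-function coefficients collapse. The entire argument is algebraic; the only care needed is in tracking the arguments of the Gamma functions after a shift of the first index of $H_{\delta, p}^{(k)}$, so that every coefficient telescopes into the stated closed form.

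First I would dispose of the top index $\ell = r$. Setting $k = r$ in \eqref{general}, the summation index $q$ ranges only over the value $q = r$, so the sum degenerates to a single term. There the combinatorial factor is $\frac{q!}{(q-k)!} = \frac{r!}{0!} = r!$ and the Gamma ratio is $\frac{\Gamma(\delta + 2r + 1)}{\Gamma(\delta + 2r + 1)} = 1$, which leaves $H_{\delta, p}^{(r)} = r!\, t^{\delta + 2r} X_{p, r}$. Dividing by $r!$ yields \eqref{pro5-1}.

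For the lower range $0 \le \ell \le r - 1$ I would apply \eqref{general} not with the base index $\delta$ but with the shifted index $\delta' := \delta + r - \ell$ and with $k = \ell$; this is legitimate since $\delta' = \delta + r - \ell \ge \delta + 1 > 1$, so Lemma \ref{formHk} applies with base index $\delta'$. The purpose of the shift is that the three $\delta'$-dependent quantities in \eqref{general} simplify: the exponent satisfies $\delta' + \ell + q = \delta + r + q$, the numerator argument satisfies $\delta' + r + 1 + \ell = \delta + 2r + 1$, and the denominator argument satisfies $\delta' + q + 1 + \ell = \delta + r + q + 1$. Substituting these, \eqref{general} becomes
\[
H_{\delta + r - \ell, p}^{(\ell)} = \sum_{q = \ell}^{r} \frac{q!}{(q - \ell)!} \frac{\Gamma (\delta + 2r + 1)}{\Gamma (\delta + r + 1 + q)}\, t^{\delta + r + q} X_{p, q}.
\]
I would then isolate the $q = \ell$ summand, whose coefficient is $\ell!\,\frac{\Gamma(\delta + 2r + 1)}{\Gamma(\delta + r + 1 + \ell)}$, transpose the remaining terms $q = \ell + 1, \dots, r$ to the other side, and solve for $t^{\delta + r + \ell} X_{p, \ell}$; multiplying through by $\frac{\Gamma(\delta + r + 1 + \ell)}{\ell!\,\Gamma(\delta + 2r + 1)}$ reproduces \eqref{pro5-2} verbatim.

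Finally I would record the meaning of the closing sentence of the statement: formula \eqref{pro5-2} expresses $t^{\delta + r + \ell} X_{p, \ell}$ in terms of $H_{\delta + r - \ell, p}^{(\ell)}$ together with the higher-index fields $X_{p, q}$ for $q > \ell$ only, so with \eqref{pro5-1} serving as the base case one determines each $X_{p, \ell}$ by downward recursion on $\ell = r, r - 1, \dots, 0$, every step being well posed. I do not expect any genuine analytic obstacle here: the substance lies entirely in Lemma \ref{formHk}, and what remains is the bookkeeping of the Gamma arguments under the index shift $\delta \mapsto \delta + r - \ell$, which is precisely what forces the coefficients to assume the stated form.
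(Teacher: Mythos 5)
Your proposal is correct and follows essentially the same route as the paper: both read \eqref{pro5-1} off from \eqref{general} with $k=r$, and both obtain \eqref{pro5-2} by applying \eqref{general} with the shifted base index $\delta+r-\ell$ and $k=\ell$, isolating the $q=\ell$ term, and solving. Your added remarks (that the shift keeps the base index above $1$, and that the downward recursion on $\ell$ is what makes the right-hand side well defined) are accurate but only make explicit what the paper leaves implicit.
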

We will provide detailed proofs of the above three lemmas in Section \ref{S-6}. We can now complete the proof of Proposition \ref{prop estimate1}.

\begin{proof}[{\bf Proof of the Proposition \ref{prop estimate1}}]

We prove,  by induction on index $\ell$ from $\ell=r$ decrease to $\ell=0$, the following estimates
\begin{equation}\label{ano}
t^{(\delta + r + \ell)d} \left\| X_{p, \ell}^d\ u(t) \right\|_{H^s} \leq \tilde{A}_{\ell}^{d + 1} d!, \ \ \ell = 0, 1, \dots, r.
\end{equation}
For the case of $\ell = r$, by using \eqref{prior2} and \eqref{pro5-1}, we have,
$$
t^{(\delta + 2r)d} \left\| X_{p, r}^d u(t) \right\|_{H^s} = \Big( \frac{1}{r!} \Big)^d \Big\| \Big( H_{\delta, p}^{(r)} \Big)^d u(t) \Big\|_{H^s} \leq \Big\| \Big( H_{\delta, p}^{(r)} \Big)^d u(t) \Big\|_{H^s} \leq (A')^{d + 1} d!,
$$
we take $\tilde{A}_r = A'$ to get the result. Assume $\ell \le r - 1$, and for all $m$ with  $\ell + 1 \le m \le r$, \eqref{ano} is true, that means we have
\begin{equation}\label{preass-1}
t^{(\delta + r + m)d} \left\| X_{p, m}^d u(t) \right\|_{H^s} \leq \tilde{A}_{m}^{d + 1} d!.
\end{equation}
We now prove that \eqref{preass-1} holds true for $m = \ell$.

By using \eqref{pro5-2}, we have
\begin{align*}
&\quad t^{(\delta + r + \ell)d} \left\| X_{p, \ell}^d u(t) \right\|_{H^s} \\
&\leq \big\| \big( H_{\delta + r - \ell, p}^{(\ell)} - \sum_{q = \ell + 1}^r \frac{q!}{(q -\ell)!} \frac{\Gamma (\delta + 2r + 1)}{\Gamma (\delta + r + 1 + q)} t^{\delta + r + q} X_{p, q} \big)^d u(t) \big\|_{H^s} \\
&\leq 2^{rd} \Big( \Big\| \left( H_{\delta + r - \ell, p}^{(\ell)} \right)^d u(t) \Big\|_{H^s} \\
&\quad + \left( r !\ \Gamma(\delta + 2r + 1) \right)^d \ \sum_{q = \ell + 1}^r t^{(\delta + r + q)d} \left\| X_{p, q}^d u(t) \right\|_{H^s} \Big).
\end{align*}
Using \eqref{prior2} and the induction assumption \eqref{preass-1}, we have
\begin{align*}
&\quad t^{(\delta + r + \ell)d} \left\| X_{p, \ell}^d u(t) \right\|_{H^s} 
\leq 2^{rd} \Big( (A')^{d + 1} d!+ \left( r !\ \Gamma(\delta + 2r + 1) \right)^d \ \sum_{q = \ell + 1}^r  \tilde{A}_{q}^{d + 1} d! \Big) \\
&\qquad\qquad \leq \tilde{A}_\ell^{d + 1} d\ !,
\end{align*}
with $\tilde{A}_\ell=2^r (r+1)! \cdot \Gamma(\delta + 2r + 1) \cdot \max\{A', \tilde{A}_{q}; \ \ell+1\le q\le r\}$.
Now we take $\tilde{A} = \max \{ \tilde{A}_0, \dots, \tilde{A}_r \}$ to get the result of Proposition \ref{prop estimate1}.
\end{proof}

Remark that here we use the fact that if the operators $T_1, T_2$ are defined by the Fourier multipliers $T_1(\xi)$ and $T_2(\xi)$ , then
\begin{align*}
\| (T_1 + T_2)^d {u}\|_{H^s(\mathbb{R}^n)}&=
\| \left< \xi \right>^s (T_1(\xi) + T_2(\xi))^d \hat{u}\|_{L^2(\mathbb{R}^n)}\\
&\le 2^d\| \left< \xi \right>^s (|T_1(\xi)|^d + |T_2(\xi)|^d) \hat{u} \|_{L^2(\mathbb{R}^n)}\\
&\le 2^{d} (\| T_1^d {u}\|_{H^s(\mathbb{R}^n)}+\| T_2^d {u}\|_{H^s(\mathbb{R}^n)}).
\end{align*}

\bigskip
\noindent{\bf Analyticity of solution: }
Now we establish the analyticity of each spatial variables. Remark that the estimates \eqref{prior1} with $q=0$ imply the analyticity of the variables $x_1, \cdots, x_{m_0}$. The assumption ${\bf (H_1)}$ implies
\begin{equation}\label{spandef}
\partial_{x_j}  = \sum_{p = 1}^{m_0} \sum_{\ell = 0}^{r} c_{p\ell}^j X_{p, \ell}, \quad \forall \ j \in \{ m_0+1, \dots, n \},
\end{equation}
with real constant coefficients. And we take
\begin{equation}\label{largest-1}
K = 1+ \max\{ \big| c_{p\ell}^j \big|, 1 \leq p \leq m_0, \ 0 \leq \ell \leq r,\ m_0+1\le j\le n\}\ .
\end{equation}

By using Proposition \ref{prop estimate1}, we have the following result.
\begin{prop}\label{ana-all}
Let $u$ be the solution of the Cauchy problem in Theorem \ref{thm1}. Then there exists a constant $\bar{A} > 0$ such that, for $j \in \{ 1, \dots, n \}, \delta>1$, $d \in \mathbb{N}$ and $0 < t \leq T$, 
\begin{equation}\label{ana-former}
t^{(\delta + 2r) d} \left\| \partial_{x_j}^d u(t) \right\|_{H^s(\mathbb{R}^n)} \leq \bar{A}^{d + 1} d!.
\end{equation}
\end{prop}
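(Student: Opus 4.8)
The plan is to reduce everything to the directional estimate \eqref{prior1} of Proposition \ref{prop estimate1}, exploiting the fact that all the vector fields $X_{p,\ell}$ have constant coefficients and are therefore commuting Fourier multipliers. I would split the argument according to whether $j$ is a ``diffusion'' direction or a ``missing'' direction.

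First, for $1\le j\le m_0$ one has $\partial_{x_j}=X_{j,0}$, so \eqref{prior1} with $\ell=0$ gives $t^{(\delta+r)d}\|\partial_{x_j}^d u(t)\|_{H^s}\le\tilde A^{d+1}d!$. Since $0<t\le T$ and $r\ge 0$, the additional power $t^{rd}$ needed to reach the exponent $(\delta+2r)d$ is harmless: $t^{rd}\le(\max\{1,T\})^{rd}$, which only multiplies the bound by a geometric factor that folds into the final constant.

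For $m_0+1\le j\le n$, I would use the spanning relation \eqref{spandef}, which expresses $\partial_{x_j}=\sum_{p=1}^{m_0}\sum_{\ell=0}^{r}c^j_{p\ell}X_{p,\ell}$ as a constant-coefficient combination of $N:=m_0(r+1)$ commuting Fourier multipliers. Raising this sum to the $d$-th power and working on the Fourier side exactly as in the remark following Proposition \ref{prop estimate1}, but with $N$ summands instead of two, the key step is the power-mean (Jensen) inequality $\big|\sum_i a_i\big|^d\le N^{d-1}\sum_i|a_i|^d$ applied to the scalar symbols. This yields $\|\partial_{x_j}^d u\|_{H^s}\le N^{d-1}\sum_{p,\ell}|c^j_{p\ell}|^d\,\|X_{p,\ell}^d u\|_{H^s}\le N^{d-1}K^d\sum_{p,\ell}\|X_{p,\ell}^d u\|_{H^s}$, using \eqref{largest-1}. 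Multiplying by $t^{(\delta+2r)d}$, bounding each term by \eqref{prior1} together with $t^{(r-\ell)d}\le(\max\{1,T\})^{rd}$ to bridge the exponent gap, and summing the $N$ terms, I obtain $t^{(\delta+2r)d}\|\partial_{x_j}^d u\|_{H^s}\le N^{d}K^d(\max\{1,T\})^{rd}\tilde A^{d+1}d!$, which is of the desired form $\bar A^{d+1}d!$ once one sets, say, $\bar A=\tilde A\,NK(\max\{1,T\})^{r}$ (taking the maximum with the constant arising from the first case).

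The whole argument is essentially bookkeeping, so there is no deep obstacle; the only point requiring care is keeping every inflating factor geometric in $d$ (the combinatorial factor $N^d$, the coefficient factor $K^d$, and the time factor $(\max\{1,T\})^{rd}$) so that they are all absorbed into $\bar A^{d+1}$ without spoiling the $d!$ growth. This is guaranteed precisely because the $X_{p,\ell}$ commute, so that $\big(\sum_{p,\ell}c^j_{p\ell}X_{p,\ell}\big)^d$ has the clean scalar symbol $\big(\sum_{p,\ell}c^j_{p\ell}X_{p,\ell}(\xi)\big)^d$ and no genuine multinomial expansion with $\binom{d}{\cdots}$-type weights is ever needed.
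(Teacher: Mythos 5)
Your proposal is correct and follows essentially the same route as the paper: the case $1\le j\le m_0$ via $\partial_{x_j}=X_{j,0}$ and \eqref{prior1}, the case $m_0+1\le j\le n$ via the spanning relation \eqref{spandef} with the constant $K$ of \eqref{largest-1}, the Fourier-multiplier bound for powers of sums of the commuting constant-coefficient fields $X_{p,\ell}$, and the absorption of the time gap $t^{(r-\ell)d}\le(\max\{1,T\})^{rd}$. The only (harmless) difference is that you apply the $N$-term power-mean inequality directly, giving $N^{d-1}$, where the paper iterates its two-term Fourier-multiplier remark to get the factor $2^{(m_0+r+1)d}$; both are geometric in $d$ and fold into $\bar A^{d+1}$.
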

\begin{proof}
For any $j \in \left\{ m_0 + 1, \dots, n \right\}$, since \eqref{spandef} and \eqref{largest-1}, we compute as follows,
\begin{align*}
t^{(\delta + 2r)d} \left\| \partial_{x_j}^d u(t) \right\|_{H^s} &= t^{(\delta + 2r)d} \Big\| \Big( \sum_{p = 1}^{m_0} \sum_{\ell = 0}^r c_{p\ell}^j X_{p, \ell} \Big)^d u(t) \Big\|_{H^s} \\
&\leq t^{(\delta + 2r)d} \ 2^{(m_0 + r + 1)d} \ \sum_{p = 1}^{m_0} \sum_{\ell = 0}^r \left\| \left( c_{p\ell}^j X_{p, \ell} \right)^d u(t) \right\|_{H^s} \\
&\leq t^{(\delta + 2r)d} \ 2^{(m_0 + r + 1)d} K^d \ \sum_{p = 1}^{m_0} \sum_{\ell = 0}^r \left\| X_{p, \ell}^d u(t) \right\|_{H^s}.
\end{align*}
If $0 < t \leq 1$ to get the supreme value, then
\begin{align*}
t^{(\delta + 2r)d} \left\| \partial_{x_j}^d u(t) \right\|_{H^s} &\leq 2^{(m_0 + r + 1)d} K^d \  \sum_{p = 1}^{m_0} \sum_{\ell = 0}^r t^{(\delta + r + \ell)d} \left\| X_{p, \ell}^d u(t) \right\|_{H^s} \\
&\leq 2^{(m_0 + r + 1)d} K^d\ \sum_{p = 1}^{m_0} \sum_{\ell = 0}^r \tilde{A}^{d + 1} d! \\
&\leq \bar{A}_1^{d + 1} d !,
\end{align*}
if we take $\bar{A}_1 = 2^{m_0 + r + 1} K m_0 (r + 1)$.

If $T > 1$ and $1 < t \leq T$ to get the supreme value, then
\begin{align*}
t^{(\delta + 2r)d} \left\| \partial_{x_j}^d u(t) \right\|_{H^s} &\leq t^{(r - \ell)d} \ 2^{(m_0 + r + 1)d} K^d \ \sum_{p = 1}^{m_0} \sum_{\ell = 0}^r t^{(\delta + r + \ell)d} \left\| X_{p, \ell}^d u(t) \right\|_{H^s} \\
&\leq (T + 1)^{rd} \ 2^{(m_0 + r + 1)d} K^d \ \sum_{p = 1}^{m_0} \sum_{\ell = 0}^r \tilde{A}^{d + 1} d! \\
&\leq \bar{A}_2^{d + 1} d !,
\end{align*}
if we take $\bar{A}_2 = (T + 1)^r 2^{m_0 + r + 1} K m_0 (r + 1)$. Thus we can obtain
$$
t^{(\delta + 2r)d} \left\| \partial_{x_j}^d u(t) \right\|_{H^s} \leq \bar{A}^{d + 1}d!, \quad j \in \left\{ m_0 + 1, \dots, n \right\},
$$
if we take $\bar{A} = 1 + \bar{A}_2$.

For the case of $j \in \left\{ 1, \dots, m_0 \right\}$, we have $\partial_{x_j} = X_{j, 0}$, thus by using \eqref{prior1}, we can easily obtain the estimate \eqref{ana-former}.
\end{proof}

We now give the proof of Theorem \ref{thm1}.
\noindent
\begin{proof}[\bf {The End of Proof of Theorem \ref{thm1}. }]
By using the Plancherel theorem, it's easy to verify the following inequality, 
$$
\left\| \partial^\alpha u(t) \right\|_{H^s(\mathbb{R}^n)} \leq \sum_{j = 1}^n \left\| \partial_{x_j}^{\left| \alpha \right|} u(t) \right\|_{H^s(\mathbb{R}^n)}, \ \ \forall \alpha\in\mathbb{N}^n. 
$$
Then by proposition \ref{ana-all}, we have
\begin{align*}
t^{(\delta + 2r) |\alpha|} \left\| \partial^\alpha u(t) \right\|_{H^s(\mathbb{R}^n)} &\leq t^{(\delta + 2r) |\alpha|} \sum_{j = 1}^n \left\| \partial_{x_j}^{|\alpha|} u(t) \right\|_{H^s(\mathbb{R}^n)} \leq L_1^{|\alpha| + 1} |\alpha|!.
\end{align*}
if we take $L_1 = n\bar{A}$. Since
$
|\alpha| ! \leq \left( 2^{n} \right)^{|\alpha| + 1} \alpha!, 
$
then we have
$$
\sup_{0 < t \leq T} t^{(\delta + 2r)|\alpha|} \left\| \partial^\alpha u(t) \right\|_{H^s(\mathbb{R}^n)} \leq L_1^{|\alpha| + 1} |\alpha|! \leq L^{|\alpha| + 1} \alpha!,
$$
where $L = 2^n L_1$,  which gives \eqref{analy-11}. Thus we have proved 
$$
u\in L^\infty(]0, T]; \mathcal{A}(\mathbb{R}^n)).
$$
Using the equation in \eqref{1-1}, and the fact that $\mathcal{A}(\mathbb{R}^n)$ is an algebra, we obtain
$$
u\in C^\infty(]0, T]; \mathcal{A}(\mathbb{R}^n)).
$$
\end{proof}

\section{Proofs of technic Lemmas}\label{S-6}
In this section, we give the proofs of the following technical Lemmas: Lemma \ref{lemma-0} , Lemma \ref{lemma estimate2}, 
Lemma \ref{formHk} and Lemma \ref{Xpell-relationship}.
\noindent
\begin{proof} [{\bf {Proof of Lemma \ref{lemma-0}. }}]
For fixed $s \in \mathbb{R}$, by using the properties of the Fourier transform, we compute directly as follows, 
\begin{align*}
\mathcal{F}\left(\left[ h, \Lambda_x^s \right] f\right)&=\mathcal{F}\left( h( \Lambda_x^s   f)\right)- 
\mathcal{F}\left(\Lambda_x^s (h f)\right)\\
&=\hat{h}*(\langle \cdot \rangle ^s \hat{f})-\langle \xi \rangle ^s\hat{h}*\hat{f} \\
&=\int_{\mathbb{R}^n}\hat{h}(\eta)(\langle \xi-\eta \rangle ^s -\langle \xi \rangle ^s )\hat{f}(\xi-\eta)d\eta.
\end{align*}
By using the first-order Taylor expansion to obtain the following result, 
\begin{align*}
\langle \xi-\eta \rangle ^s -\langle \xi \rangle ^s &= -\int^1_0\frac{d}{dt}(\langle (\xi-\eta)+t\eta \rangle ^s  )dt \\
&= - s \int^1_0\Big(\left(1+|(\xi-\eta)+t\eta|^2 \right)^{\frac s2-1}((\xi-\eta)+t\eta)\cdot\eta \Big) dt, 
\end{align*}
thus we have
\begin{equation}\label{taylor-1}
\left| \mathcal{F}\left(\left[ h, \Lambda_x^s \right] f\right) \right| \leq |s| \int_{\mathbb{R}^n} | \hat{h}(\eta) |\int^1_0\langle (\xi - \eta) + t\eta \rangle^{s - 1} |\eta| dt \cdot |\hat{f}(\xi-\eta)| d\eta. 
\end{equation}
By using the Peetre's inequality, for $0 \leq t \leq 1$, we have 
$$
\frac{\langle (\xi-\eta)+t\eta \rangle ^{s-1}}{\langle \xi-\eta \rangle ^{s-1}}\le 2^{|s-1|} \langle \eta\rangle ^{|s-1|}. 
$$
Combining \eqref{taylor-1}, we have 
\begin{align*}
\left|\mathcal{F}\left(\left[ h, \Lambda_x^s \right] f\right)\right|
&\le C_s\int_{\mathbb{R}^n} |\langle \eta\rangle ^{|s-1|+1}  \hat{h}(\eta)|\ 
|\langle \xi-\eta \rangle ^{s-1} \hat{f}(\xi-\eta)|d\eta .
\end{align*}
By using the Young's convolution inequality, we obtain 
$$
\|\left[ h, \Lambda_x^s \right] f\|_{L^2} =\|\mathcal{F}\left(\left[ h, \Lambda_x^s \right] f\right)\|_{L^2} \le C_s \| F_1*F_2\|_{L^2}\le C_s
 \|F_1\|_{L^1}\|F_2\|_{L^2}, 
$$
with 
$$
F_1(\eta)=\langle \eta \rangle^{|s-1|+1}\hat{h}(\eta), \ \ \  F_2(\eta)= \langle \eta \rangle^{s-1}\hat{f}(\eta).
$$
The estimate of $F_1$ needs the H\"older's inequality, 
$$
 \|F_1\|_{L^1}\le \| \langle \eta \rangle^{-\frac n2-1}\|_{L^2} \ \| \langle \eta \rangle^{s_0}\hat{h}(\eta)\|_{L^2}\le C_{1}\|h\|_{H^{s_0}}, 
$$
with $s_0=|s-1|+\frac n2+2$. Thus the proof of inequality \eqref{commu-1} is accomplished. 

As for the \eqref{commu-2}, by using the Peetre's inequality, we compute directly as follows, 
$$
\| h\, f\|_{H^s(\mathbb{R}^n)}=\|\langle \cdot \rangle^{s}(\hat{h} *\hat{f})\|_{L^2} \leq \|\langle \cdot \rangle^{|s|}\hat{h}\|_{L^1}\|\langle \cdot \rangle^{s}\hat{f}\|_{L^2}
\le C_{2} \| h\|_{H^{s_1}(\mathbb{R}^n)} \|  f\|_{H^s(\mathbb{R}^n)}, 
$$
with $s_1=|s|+\frac n2+1$.
\end{proof}

\noindent
\begin{proof} [{\bf {Proof of Lemma \ref{lemma estimate2}. }}]
We prove that, by induction on the index $k$, there exists ${A'}_k > 0$ such that, for any $d \in \mathbb{N}$,
$$
\left\| \left( H_{\delta, p}^{(k)} \right)^d u(t) \right\|_{H^s} \leq ({A'}_k)^{d + 1} d!, \ \ \ k = 0, 1, \dots, r.
$$
For the case of $k = 0$, note \eqref{uselater1}, then due to \eqref{5.1}, we have
\begin{equation*}
\begin{aligned}
\left\| \left( H_{\delta, p}^{(0)} \right)^d u(t) \right\|_{H^s} &= \left( (\delta + 1) \dots (\delta + r) \right)^d \left\|  H_{\delta, p}^d u(t) \right\|_{H^s}
\leq  ({A'}_0)^{d + 1} d!,
\end{aligned}
\end{equation*}
if we take ${A'}_0 = (\delta + 1) \dots (\delta + r) A$. Assume $k \ge 1$, the following inequality holds true for all $m$ satisfying $0 \leq m \leq k - 1$,
$$
\Big\| \Big( H_{\delta, p}^{(m)} \Big)^d u(t) \Big\|_{H^s(\mathbb{R}^n)} \leq ({A'}_m)^{d + 1} d!.
$$
Now for the case of $m = k$, by the definition of  $H_{\delta, p}^{(k)}$ in \eqref{iterative format}, we have
\begin{equation*}
\begin{aligned}
\big\| \big( H_{\delta, p}^{(k)} \big)^d u(t) \big\|_{H^s} &= \big\| \big( (\delta + r + k) t H_{\delta, p}^{(k - 1)} - (\delta + 2k - 1) H_{\delta + 1, p}^{(k - 1)} \big)^d u(t) \big\|_{H^s} \\
&\leq 2^d \big( \big\| \big( (\delta + r + k)t H_{\delta, p}^{(k - 1)} \big)^d u(t) \big\|_{H^s} \\
&\quad\ \  + \big\| \big( (\delta + 2k - 1) H_{\delta + 1, p}^{(k - 1)} \big)^d u(t) \big\|_{H^s} \big) \\
&\leq ({A'}_k)^{d + 1} d!.
\end{aligned}
\end{equation*}
if we take ${A'}_k = 2 (\delta + 2r) (T + 1) {A'}_{k-1}$. Then we take $A' = \max \{ {A'}_0, \dots, {A'}_r \}$ to obtain \eqref{prior2}.
\end{proof}

\noindent
\begin{proof} [{\bf {Proof of Lemma \ref{formHk}. }}]
For the case of $k = 0$, by using \eqref{uselater1} and \eqref{2-1}, we have
$$
H_{\delta, p}^{(0)} = \frac{\Gamma(\delta + 1 + r)}{\Gamma(\delta + 1)} \Big( \sum_{q = 0}^r \frac{\Gamma(\delta + 1)}{\Gamma(\delta + 1 + q)} t^{\delta + q} X_{p, q} \Big) = \sum_{q = 0}^r \frac{\Gamma(\delta + 1 + r)}{\Gamma(\delta + 1 + q)} t^{\delta + q} X_{p, q}.
$$
For the case of $k = 1$, since
\begin{align*}
(\delta + r + 1) t H_{\delta, p}^{(0)} &= (\delta + r + 1) t \Big( \sum_{q = 0}^r \frac{\Gamma(\delta + 1 + r)}{\Gamma(\delta + 1 + q)} t^{\delta + q} X_{p, q} \Big) \\
&= \sum_{q = 0}^r \frac{\Gamma(\delta + 2 + r)}{\Gamma(\delta + 1 + q)} t^{\delta + q + 1} X_{p, q},
\end{align*}
and
\begin{align*}
&(\delta  + 1) H_{\delta + 1, p}^{(0)} = \sum_{q = 0}^r \frac{(\delta + 1) \Gamma(\delta + 2 + r)}{\Gamma(\delta + 2 + q)} t^{\delta + q + 1} X_{p, q}
\end{align*}
thus by using \eqref{uselater-2} we have
$$
H_{\delta, p}^{(1)} = \sum_{q = 0}^r \frac{q \cdot \Gamma(\delta + 2 + r)}{\Gamma(\delta + 2 + q)} t^{\delta + q + 1} X_{p, q} = \sum_{q = 1}^r \frac{q \cdot \Gamma(\delta + 2 + r)}{\Gamma(\delta + 2 + q)} t^{\delta + q + 1} X_{p, q}.
$$

We then prove \eqref{general} by induction on the index $k$. The case of $k = 0$ is evident. Assume that $1 \leq k \leq r$, for all $0 \leq m \leq k - 1$, we have
\begin{equation}\label{assump-general}
H_{\delta, p}^{(m)} = \sum_{q = m}^{r} \frac{q!}{(q - m)!} \frac{\Gamma (\delta + r + 1 + m)}{\Gamma (\delta + q + 1 + m)}  t^{\delta + m + q} X_{p, q}.
\end{equation}
Now we show that \eqref{assump-general} holds true for $m = k$. By using \eqref{assump-general}, we can obtain
$$
H_{\delta, p}^{(k - 1)} = \sum_{q = k - 1}^r \frac{q!}{(q - k + 1)!} \frac{\Gamma(\delta + r + k)}{\Gamma(\delta + q + k)} t^{\delta + k - 1 + q} X_{p, q},
$$
and then
$$
H_{\delta + 1, p}^{(k - 1)} = \sum_{q = k - 1}^r \frac{q!}{(q - k + 1)!} \frac{\Gamma(\delta + 1 + r + k)}{\Gamma(\delta + 1 + q + k)} t^{\delta + k + q} X_{p, q}.
$$
Then by using the iterative format \eqref{iterative format}, we compute
\begin{align*}
H_{\delta, p}^{(k)} &=  (\delta + r + k) t H_{\delta, p}^{(k - 1)} - (\delta + 2k - 1) H_{\delta + 1, p}^{(k - 1)} \\
&= \sum_{q = k - 1}^r \frac{q!}{(q - k + 1)!} \big( \frac{(\delta + r + k)\Gamma (\delta + r + k)}{\Gamma(\delta + q + k)} \\
&\quad\ \  - \frac{(\delta + 2k - 1) \Gamma(\delta + 1 + r + k)}{\Gamma(\delta + 1 + q + k)} \big) t^{\delta + k + q} X_{p, q},
\end{align*}
by direct computation, we have
\begin{align*}
\frac{(\delta + r + k)\Gamma (\delta + r + k)}{\Gamma(\delta + q + k)} &- \frac{(\delta + 2k - 1) \Gamma(\delta + 1 + r + k)}{\Gamma(\delta + 1 + q + k)} \\
&= \frac{(q - k + 1) \Gamma(\delta + r + k + 1)}{\Gamma(\delta + q + k + 1)}.
\end{align*}
Thus
\begin{align*}
H_{\delta, p}^{(k)} &= \sum_{q = k - 1}^r \frac{q!}{(q - k + 1)!} \frac{(q - k + 1)\Gamma(\delta + r + k + 1)}{\Gamma(\delta + q + k + 1)} t^{\delta + k + q} X_{p, q} \\
&= \sum_{q = k}^r \frac{q!}{(q - k + 1)!} \frac{(q - k + 1)\Gamma(\delta + r + k + 1)}{\Gamma(\delta + q + k + 1)} t^{\delta + k + q} X_{p, q} \\
&= \sum_{j = k}^r \frac{q!}{(q - k)!} \frac{\Gamma(\delta + r + 1 + k)}{\Gamma (\delta + q + 1 + k)} t^{\delta + k + q} X_{p, q}.
\end{align*}
\end{proof}

\noindent
\begin{proof} [{\bf {Proof of Lemma \ref{Xpell-relationship}. }}]
For the case of $\ell = r$, we can obtain the following equality by \eqref{general},
$$
H_{\delta, p}^{(r)} = r! t^{\delta + 2r} X_{p, r}.
$$
For the case of $0 \leq \ell \leq r - 1$, by \eqref{general}, we have
\begin{align*}
H_{\delta + r - \ell, p}^{(\ell)} &= \sum_{q = \ell}^r \frac{q!}{(q - \ell)!} \frac{\Gamma (\delta + 2r + 1)}{\Gamma (\delta + r + 1 + q)} t^{\delta + r + q} X_{p, q} \\
&= \ell! \frac{\Gamma (\delta + 2r + 1)}{\Gamma (\delta + r + 1 + \ell)} t^{\delta + r + \ell} X_{p, \ell} \\
&\quad + \sum_{q = \ell + 1}^r \frac{q!}{(q - \ell)!} \frac{\Gamma (\delta + 2r + 1)}{\Gamma (\delta + r + 1 + q)} t^{\delta + r + q} X_{p, q},
\end{align*}
which lead to
\begin{align*}
t^{\delta + r + \ell} X_{p, \ell} =& \frac{\Gamma (\delta + r + 1 + \ell)}{\ell! \Gamma (\delta + 2r + 1)} \Big[ H_{\delta + r - \ell, p}^{(\ell)}\\
 &- \sum_{q = \ell + 1}^r \frac{q!}{\left( q - \ell \right)!} \frac{\Gamma (\delta + 2r + 1)}{\Gamma (\delta + r + 1 + q)} t^{\delta + r + q} X_{p, q} \Big].
\end{align*}
\end{proof}

\section{Some examples of ultra-parabolic equations}\label{S-7}
Now, we present some examples with different motivations and backgrounds of the equation in \eqref{1-1}.

\subsection*{Brownian motion with inertia}
In the aspect of the degenerate diffusion process, the following strongly degenerate equation
\begin{equation}\label{example-01}
\begin{cases}
&\partial_t u =\displaystyle \sum_{j = 1}^{n-1} x_j \cdot \nabla_{x_{j+1}} u+{\bf b}(t, x) \cdot \nabla_{x_{1}} u +  \sum_{k, j = 1}^m a_{kj} (t, x) \partial_{x_1^k}\partial_{x_1^j} u,\\
&u|_{t=0}=u_0,
\end{cases}
\end{equation}
is actually a class of Brownian motion processes with inertia proposed by Sonin \cite{Sonin-1},
where $x=(x_1, \cdots, x_n), x_j=(x^1_j, \cdots, x^m_j)\in\mathbb{R}^m, j=1, \cdots, n$ and
${\bf b}(t, x)=(b_1(t, x), \cdots, b_m(t, x))$. Brownian motion processes with inertia enjoy the property that the transition density is smooth even its diffusion matrix is degenerate \cite{ILL-2}\cite{ILL-1}.

We can now apply our Theorem \ref{thm1} to the Cauchy problem \eqref{example-01}. We have
$$
X= \sum_{j = 1}^{n-1} x_j \cdot \nabla_{x_{j+1}},\quad Y={\bf b}(t, x) \cdot \nabla_{x_{1}}.
$$
And
$$
{\bf X}_0 = \nabla_{x_{1}},\quad {\bf X}_1 =[{\bf X}_0, X]=\nabla_{x_{2}},\ \ \cdots,\ \
{\bf X}_{n-1} =[{\bf X}_{n-2}, X]=\nabla_{x_{n}}.
$$
Then the assumption ${\bf (H_1)}$ is satisfied, so that if the coefficients $(a_{k j}), (b_k)$ satisfy the assumption ${\bf (H_2), (H_3)}$, the solution $u(t, x)$ of Cauchy problem \eqref{example-01} is analytic for variables $x=(x_1, \cdots, x_n)\in\mathbb{R}^{n\times m}$ when $t>0$.

\subsection*{Fokker-Planck equation}

In the area of the kinetic theory, Landau equation models the binary grazing collisions between charged particles \cite{Villani-2}. The Cauchy problem for isotropic case of spatially inhomogeneous Landau equation is the following so-called Fokker-Planck equation \cite{Villani-1}
\begin{equation}\label{F-P}
\begin{cases}
\partial_t u + v \cdot \nabla_x u = \nabla_v \cdot (\nabla_v u + v u), \\
u|_{t=0}=u_0,
\end{cases}
\end{equation}
where $x, v \in \mathbb{R}^3$, and
$$
X =  v \cdot \nabla_x,\quad Y=-v \cdot \nabla_v-3,
$$
then
$$
{\bf X}_0 = \nabla_{v}, \ \ {\bf X}_1 = [{\bf X}_0, X] = \nabla_x,
$$
thus ${\bf (H_1)}$ is satisfied. The assumptions ${\bf (H_2)}$ and ${\bf (H_3)}$ are obvious. Then the solution $u(t, x)$ of the Cauchy problem \eqref{F-P} is analytic for the variables $(x, v) \in \mathbb{R}^3_x\times \mathbb{R}^3_v$ when $t > 0$.

Here we also mention that the auxiliary vector fields \eqref{2-1} can be used to obtain the sharp regularization effect for the non-cutoff Boltzmann equation as in \cite{ref11}.

\subsection*{Kolmogorov equation}

Kolmogorov \cite{Kol-1} in 1934 introduce the following equation
\begin{equation}\label{classical Kol}
\partial_t u + \sum_{j = 1}^n x_j \partial_{x_{n + j}} u - \sum_{j = 1}^n \partial_{x_j}^2 u = g,
\end{equation}
to describe the probability density of a system with $2n$ degree of freedom. In the work by Morimoto-Xu \cite{MX-1}, they prove that the related Cauchy problem of the \eqref{classical Kol} possess the analytical smoothing effect property.

A more general form of \eqref{classical Kol} is stated as follows,
\begin{equation}\label{high di}
\partial_t u + \sum_{j = 1}^n x_j \partial_{x_{n + j}} u - \sum_{j = 1}^n a_{j}(t, x) \partial_{x_j}^2 u = g,
\end{equation}
the Gevrey regularity results about the related Cauchy problem are obtained in \cite{o-3} and \cite{o-2}, the analytic regularity result about the two dimensional case is obtained by our former work \cite{u-1}. For the case of a more general $2n$-dimensional case, let
$$
X = \sum_{j = 1}^n x_j \partial_{x_{n + j}},
$$
then
$$
{\bf X}_0 = (\partial_{x_1}, \dots, \partial_{x_n}), \ \ {\bf X}_1 = [{\bf X}_0, X] = (\partial_{x_{n + 1}}, \dots, \partial_{x_{2n}}),
$$
which lead to the fact that the assumption ${\bf (H_1)}$ holds true. Thus once the functions $(a_j)$ are analytic, then the solution for the related Cauchy problem of \eqref{high di} is analytic with respect to the variable $x \in \mathbb{R}^{2n}$ for any positive time $t>0$.

\subsection*{Lanconelli-Polidoro type operators}

In \cite{ref1}, E. Lanconelli and S. Polidoro propose the following condition on the the coefficients of
 $X =\displaystyle \sum_{k, j = 1}^n b_{kj} x_k \partial_{x_j} $ of the equation in Cauchy problem \eqref{1-1}. The coefficient matrix $E = (b_{kj})_{n \times n}$ has the form, for $\ell \ge 1$,
\begin{equation}\label{matrix}
E = \begin{pmatrix}
\mathbb{O} & E_1 & \mathbb{O}  & \dots & \mathbb{O}  \\
\mathbb{O}  & \mathbb{O}  & E_2 & \dots & \mathbb{O}  \\
\cdot & \cdot & \cdot & \dots & \cdot \\
\cdot & \cdot & \cdot & \dots & \cdot \\
\cdot & \cdot & \cdot & \dots & \cdot \\
\mathbb{O}  & \mathbb{O}  & \mathbb{O}  & \dots & E_\ell \\
\mathbb{O}  & \mathbb{O}  & \mathbb{O}  & \dots & \mathbb{O}
\end{pmatrix}\, .
\end{equation}
{\bf (L-P)} Lanconelli-Polidoro condition:
$$
\begin{cases}
\mbox{$E_j$ is  $m_{j-1} \times m_j$ matrix with rank $m_j$, $j=1, \cdots, \ell$, and  }\\
\mbox{$m_0 \ge m_1 \ge \dots \ge m_\ell \ge 1$ with $m_0 + m_1 + \dots + m_\ell = n$.}
\end{cases}
$$
Right now, we verify that the assumption {\bf (L-P)} implies ${\bf (H_1)}$. Due to the structure of $(b_{kj})$ in \eqref{matrix}, we have
$$
X = \sum_{k, j = 1}^{n} b_{kj} x_k \partial_{x_j}=\sum^\ell_{j=1}\langle x^{j-1} E_j, \nabla_j\rangle,,
$$
where
$$
x^0=(x_1, \cdots, x_{m_0}),\ \ x^j=(x_{m_0+\cdots+m_{j-1}+1},\ \cdots,\  x_{m_0+\cdots+m_{j}})\in \mathbb{R}^{m_j},
$$
and
$$
\nabla_0=(\partial_{x_1}, \cdots, \partial_{x_{m_0}}),\ \
\nabla_j=(\partial_{x_{m_0+\cdots+m_{j-1}+1}},\ \cdots,\ \partial_{ x_{m_0+\cdots+m_{j}}}),\ \ \ j=1, \cdots, \ell.
$$
Then for any integer $1 \leq q \leq \ell$,
$$
{\bf X}_0 = \nabla_0 = (\partial_{x_1}, \cdots, \partial_{x_{m_0}}), \ \ {\bf X}_q = [{\bf X}_{q - 1}, X].
$$
Setting
$$
E^{(q)} = \prod_{i = 1}^q E_i, \ \ \ q = 1, \dots, \ell,
$$
this is a $m_0\times m_q$ matrix. A direct linear algebra result is given as follows:
\begin{lemma}\label{lemma rank-1}
Let $E_1, \cdots, E_\ell$ in \eqref{matrix} satisfy the assumption {\bf (L-P)}, then
$$
\rank E^{(q)} = m_q,  \ \ \ q = 1, \dots, \ell.
$$
\end{lemma}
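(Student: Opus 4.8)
The plan is to prove the equality by induction on $q$, the crucial observation being that the rank hypothesis in \textbf{(L-P)} is precisely a \emph{full column rank} condition: since $\rank E_j = m_j$ while $E_j$ is $m_{j-1}\times m_j$ with $m_{j-1}\ge m_j$, each factor $E_j$ has full column rank $m_j$ and hence, viewed as a linear map $\mathbb{R}^{m_j}\to\mathbb{R}^{m_{j-1}}$, is injective. Before the induction I would record the trivial upper bound: because $E^{(q)}=E_1\cdots E_q$ is an $m_0\times m_q$ matrix, one automatically has $\rank E^{(q)}\le m_q$. The real content of the lemma is therefore the reverse inequality $\rank E^{(q)}\ge m_q$.

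For the lower bound I would carry the induction as follows. The base case $q=1$ is immediate, since $E^{(1)}=E_1$ has rank $m_1$ by hypothesis. For the inductive step, suppose $\rank E^{(q-1)}=m_{q-1}$, so that $E^{(q-1)}$ is itself of full column rank and injective as a map $\mathbb{R}^{m_{q-1}}\to\mathbb{R}^{m_0}$. Writing $E^{(q)}=E^{(q-1)}E_q$, the composition of the injective map $E_q\colon\mathbb{R}^{m_q}\to\mathbb{R}^{m_{q-1}}$ followed by the injective map $E^{(q-1)}\colon\mathbb{R}^{m_{q-1}}\to\mathbb{R}^{m_0}$ is again injective, whence $E^{(q)}$ has trivial kernel and $\rank E^{(q)}=m_q$. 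Combined with the upper bound this gives the claimed equality.

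If one prefers a purely numerical argument, the inductive step can equally be closed with Sylvester's rank inequality $\rank(AB)\ge \rank A+\rank B-N$, where $N$ is the common inner dimension. Applying this with $A=E^{(q-1)}$, $B=E_q$ and $N=m_{q-1}$ yields $\rank E^{(q)}\ge m_{q-1}+m_q-m_{q-1}=m_q$, again matching the upper bound. In truth this is a routine linear-algebra fact and there is no serious obstacle; the only point that must not be overlooked is that the monotonicity condition $m_0\ge m_1\ge\cdots\ge m_\ell$ is exactly what guarantees that every intermediate product $E^{(q-1)}$ stays of full column rank, so that the injectivity (equivalently, the tightness in Sylvester's inequality) propagates through the entire chain.
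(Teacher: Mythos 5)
Your proof is correct. The paper gives no proof of this lemma at all---it is stated as ``a direct linear algebra result''---and your argument (each $E_j$ has full column rank since $\rank E_j=m_j\le m_{j-1}$, so the composition $v\mapsto E^{(q-1)}(E_qv)$ remains injective, equivalently Sylvester's inequality gives $\rank E^{(q)}\ge m_{q-1}+m_q-m_{q-1}=m_q$, matching the trivial upper bound) is exactly the standard verification the authors are implicitly invoking.
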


By using this lemma, we have the following result.
\begin{prop}\label{last-prop}
Let $E_1, \cdots, E_\ell$ in \eqref{matrix} satisfy the assumption {\bf (L-P)}, for any integer $1 \leq q \leq \ell$, then
\begin{equation}\label{forward}
{\bf X}^T_{q}= E^{(q)}\, \nabla^T_q,
\end{equation}
and there exists a $m_q\times m_0$ matrix $A_q$ with $\rank{A_q} = m_q$ such that
\begin{equation}\label{inverse}
\nabla^T_q=A_q {\bf X}_{q}^T.
\end{equation}
\end{prop}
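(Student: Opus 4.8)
The plan is to establish \eqref{forward} by induction on $q$ using a single commutator identity, and then to deduce \eqref{inverse} from \eqref{forward} together with the rank information furnished by Lemma \ref{lemma rank-1}. First I would record the basic commutator rule. Writing $X = \sum_{j=1}^\ell \langle x^{j-1} E_j, \nabla_j \rangle$, the coefficient of $\partial_{(x^{j})_d}$ in $X$ is $\sum_a (x^{j-1})_a (E_j)_{ad}$, which is linear in the block $x^{j-1}$ and thus depends on the coordinate $(x^s)_c$ only when $j = s+1$ and $a = c$. A direct computation of $[\partial_{(x^{s})_c}, X]$ then gives
\[
[\partial_{(x^{s})_c}, X] = \sum_{d=1}^{m_{s+1}} (E_{s+1})_{cd}\, \partial_{(x^{s+1})_d}, \qquad 0 \le s \le \ell - 1.
\]
In vector form this says that bracketing the column $\nabla_s^T$ against $X$ produces $E_{s+1}\nabla_{s+1}^T$: one step of bracketing advances the differentiation from block $s$ to block $s+1$ and multiplies on the left by $E_{s+1}$.

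With this identity in hand, \eqref{forward} follows by induction. For $q = 1$, since ${\bf X}_0 = \nabla_0$, the identity with $s = 0$ gives ${\bf X}_1^T = [{\bf X}_0, X]^T = E_1 \nabla_1^T = E^{(1)}\nabla_1^T$. Assuming ${\bf X}_{q-1}^T = E^{(q-1)}\nabla_{q-1}^T$, each component of ${\bf X}_{q-1}$ is a constant-coefficient combination of the $\partial_{(x^{q-1})_b}$; bracketing with $X$ and applying the identity with $s = q-1$ yields
\[
({\bf X}_q)_c = \sum_b (E^{(q-1)})_{cb}\sum_d (E_q)_{bd}\,\partial_{(x^q)_d} = (E^{(q-1)}E_q\,\nabla_q^T)_c = (E^{(q)}\nabla_q^T)_c,
\]
which is \eqref{forward}. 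The one point to verify carefully is that no spurious terms survive in the commutator; this is precisely where the super-diagonal block structure of $E$ in \eqref{matrix} is used, since it forces $\partial_{(x^{q-1})_b}$ to reach only the block-$q$ part of $X$ and to produce nothing in other blocks.

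For the inverse relation \eqref{inverse}, I would invoke Lemma \ref{lemma rank-1}: the $m_0 \times m_q$ matrix $E^{(q)}$ has full column rank $m_q$, so $(E^{(q)})^{T} E^{(q)}$ is an invertible $m_q \times m_q$ matrix. Setting the $m_q \times m_0$ matrix
\[
A_q = \big( (E^{(q)})^{T} E^{(q)} \big)^{-1} (E^{(q)})^{T},
\]
and using \eqref{forward}, one gets $A_q {\bf X}_q^T = A_q E^{(q)} \nabla_q^T = \nabla_q^T$. Finally $\rank A_q = m_q$: from the identity $A_q E^{(q)} = I_{m_q}$ we obtain $m_q = \rank(A_q E^{(q)}) \le \rank A_q \le \min(m_q, m_0) = m_q$, using $m_0 \ge m_q$. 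The substantive step is the commutator induction establishing \eqref{forward}; the inverse is then a routine consequence of the full-rank property, with the pseudoinverse merely providing an explicit choice of left inverse $A_q$.
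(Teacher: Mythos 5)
Your proposal is correct and follows essentially the same route as the paper: the same block-commutator identity $[\partial_{(x^{s})_c},X]=\sum_d (E_{s+1})_{cd}\partial_{(x^{s+1})_d}$ driving the induction for \eqref{forward}, and the same left pseudoinverse $A_q=\bigl((E^{(q)})^{T}E^{(q)}\bigr)^{-1}(E^{(q)})^{T}$ for \eqref{inverse}. Your explicit verification that $\rank A_q=m_q$ via $A_qE^{(q)}=I_{m_q}$ is a small welcome addition where the paper merely cites ``a linear algebra result.''
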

\begin{proof}
We prove \eqref{forward} by induction on the index $q$. We now take any $f \in  \mathscr{S}(\mathbb{R}^n)$. For the case of $q = 1$, for any $k = 1, \dots, m_0$,
\begin{align*}
\left[ \partial_{x_k}, X \right] f &= \big[ \partial_{x_k}, \sum_{j = 1}^r \left< x^{j - 1} E_j, \nabla_j \right> \big] f \\
&= \big[ \partial_{x_k}, \left< x^0 E_1, \nabla_1 \right> \big] f \\
&= {\bf e}_k E_1 \nabla_1^T f,
\end{align*}
where ${\bf e}_k$ denotes the standard unit vector. Here we use an obvious fact that
$$
\bigg[ \partial_{x_k}, \sum_{j = 1, j \ne k}^r \left< x^{j - 1} E_j, \nabla_j \right> \bigg] = 0.
$$
Thus we can deduce that
\begin{equation*}
{\bf X}_1^T = \left[ \nabla_0, X \right] = E_1 \nabla_1^T = E^{(1)} \nabla_1^T.
\end{equation*}
Assume $1 \leq q \leq \ell$, \eqref{forward} holds true for all $1 \leq m \leq q - 1$,
\begin{equation}\label{forward-1}
{\bf X}^T_{m}= E^{(m)}\, \nabla^T_m.
\end{equation}
Now we show \eqref{forward-1} holds true for $m = q$, for any $k = 1, \dots, m_0$, we compute as follows,
\begin{align*}
[X_{k, q - 1}, X] f &= \big[ {\bf e}_k E^{(q - 1)} \cdot \nabla_{q - 1}^T, X \big] f \\
&= \big[ {\bf e}_k E^{(q - 1)} \cdot \nabla_{q - 1}^T, \sum_{j = 1}^e \left< x^{j - 1} E_j, \nabla_j \right> \big] f \\
&= \big[ {\bf e}_k E^{(q - 1)} \cdot \nabla_{q - 1}^T, \left< x^{q - 1} E_q, \nabla_q \right> \big] f \\
&= {\bf e}_k E^{(q - 1)} \cdot E_q \cdot \nabla_q^T f \\
&= {\bf e}_k E^{(q)} \cdot \nabla_q^T f.
\end{align*}
Here we use the fact that
$$
\bigg[ {\bf e}_k E^{(q - 1)} \cdot \nabla_{q - 1}^T, \sum_{j = 1, j \ne q}^r \left< x^{j - 1} E_j, \nabla_j \right> \bigg] = 0.
$$
Thus we have
$$
{\bf X}_q^T = [{\bf X}_{q - 1}, X]^T = \left( [X_{1, q - 1}, X], \dots, [X_{m_0, q - 1}, X] \right)^T = E^{(q)} \nabla_q^T.
$$
As for the validity of \eqref{inverse}, since $\rank E^{(q)} = m_q$ by lemma \ref{lemma rank-1}, we take $A_q = \left( \left( E^{(q)} \right)^T E^{(q)} \right)^{-1} \cdot \left( E^{(q)} \right)^T$. And a linear algebra result gives that $\rank A_q = m_q$.
\end{proof}

Thus by the result of the proposition \ref{last-prop}, the assumption ${\bf (H_1)}$ is satisfied, so that if the coefficients $(a_{kj})$ satisfy the assumption ${\bf (H_2), (H_3)}$, the solution of Cauchy problem \eqref{1-1} is analytic for variables $x=(x_1, \cdots, x_n)\in\mathbb{R}^{n}$ when $t>0$.

\bigskip\noindent {\bf Acknowledgements.}
This work was supported by the NSFC (No.12031006) and the Fundamental
Research Funds for the Central Universities of China.

\end{document}